\newtheorem{theorem}{Theorem}
\theoremstyle{plain}
\newtheorem{definition}[theorem]{Definition}
\newtheorem{corollary}[theorem]{Corollary}
\newtheorem{lemma}[theorem]{Lemma}
\newtheorem{proposition}[theorem]{Proposition}
\numberwithin{equation}{section}
\def\eps{\varepsilon}
\def\ri{{\rm i}}
\newcommand{\R}{\mathbb{R}}
\newcommand{\C}{\mathbb{C}}
\newcommand{\N}{\mathbb{N}}
\newcommand{\Z}{\mathbb{Z}}
\renewcommand{\phi}{\varphi}
\newcommand{\bpm}{{\begin{pmatrix}}}
\renewcommand{\phi}{\varphi}
\DeclareMathOperator{\Id}{Id}
\DeclareMathOperator{\spann}{span}
\DeclareMathOperator{\sign}{sign}
\DeclareMathOperator{\dom}{dom}
\def\blem{\begin{lemma}}\def\elem{\end{lemma}}
\def\bthm{\begin{theorem}}\def\ethm{\end{theorem}}
\def\bcor{\begin{corollary}}\def\ecor{\end{corollary}}
\def\beq{\begin{equation}}\def\eeq{\end{equation}}
\def\sm{\setminus}
\newcommand{\matII}[4]{
\ensuremath{ 
\begin{pmatrix}
#1 & #2 \\
#3 & #4 \\
\end{pmatrix}}}
\newcommand{\vecII}[2]{
\ensuremath{
\begin{pmatrix}
#1 \\ #2 \\
\end{pmatrix}}}
\begin{document}

\title[]{A priori bounds and global bifurcation results for frequency combs modeled by the Lugiato-Lefever
equation}

\author{Rainer Mandel}
\address{R. Mandel \hfill\break 
Scuola Normale Superiore di Pisa, \hfill\break
I-56126 Pisa, Italy}

\curraddr{Institute for Analysis, Karlsruhe Institute of Technology (KIT), \hfill\break
D-76128 Karlsruhe, Germany}
\email{Rainer.Mandel@kit.edu}

\author{Wolfgang Reichel}
\address{W. Reichel \hfill\break 
Institute for Analysis, Karlsruhe Institute of Technology (KIT), \hfill\break
D-76128 Karlsruhe, Germany}
\email{wolfgang.reichel@kit.edu}

\date{\today}

\subjclass[2000]{Primary: 34C23, 34B15; Secondary: 35Q55, 35Q60}
\keywords{nonlinear Schr\"odinger equation, frequency combs, damping, forcing, bifurcation, Lugiato-Lefever equation}

\begin{abstract}
  In nonlinear optics $2\pi$-periodic solutions $a\in C^2([0,2\pi];\C)$ of the stationary Lugiato-Lefever
  equation $-d a''= (\ri -\zeta)a +|a|^2a-\rm i f$ serve as a model for frequency combs, which are optical
  signals consisting of a superposition of modes with equally spaced frequencies. We prove that nontrivial
  frequency combs can only be observed for special ranges of values of the forcing and
  detuning parameters $f$ and $\zeta$, as it has been previously documented in experiments and numerical
  simulations. E.g., if the detuning parameter $\zeta$ is too large then nontrivial frequency combs do not
  exist, cf. Theorem~\ref{Thm 2 nonexistence}. Additionally, we show that for large ranges of parameter
  values nontrivial frequency combs may be found on continua which bifurcate from curves of trivial frequency
  combs. Our results rely on the proof of a priori bounds for the stationary Lugiato-Lefever equation as well
  as a detailed rigorous bifurcation analysis based on the bifurcation theorems of Crandall-Rabinowitz and
  Rabinowitz. We use the software packages AUTO and MATLAB to illustrate our results by numerical
  computations of bifurcation diagrams and of selected solutions.
\end{abstract}

\maketitle


\section{Introduction} 

  In physics literature an optical signal is called a frequency comb if it consists of a superposition of
  modes with equally spaced frequencies. By a suitable choice of reference frame a frequency comb becomes
  stationary (time-independent). For $k\in\Z$ let $\hat a_k$ denote the complex amplitude of the $k$-th mode of the signal in
  the frequency domain and let $a(x)= \sum_{k\in \Z} \hat a_k  e^{ \ri kx}$ be the associated Fourier series.
  A commonly used mathematical model is given by the stationary Lugiato-Lefever equation
  \begin{align}\label{Gl freqcomb} 
    \begin{aligned}
      -d a_1'' & = -a_2-\zeta a_1 + (a_1^2+a_2^2)a_1, \\
      -d a_2'' & = a_1-\zeta a_2 +(a_1^2+a_2^2)a_2-f, \\
      & a_1, a_2 \;\; 2\pi\mbox{-periodic} 
    \end{aligned}
  \end{align}
  where the parameters satisfy $d\neq 0$ and $\zeta,f\in\R$ and $a(x,t)=a_1(x) + \ri a_2(x)$, i.e., $a$ is
  split into its real and imaginary part. Derivations of \eqref{Gl freqcomb} may be found, e.g., in
  \cite{chembo_2010,herr_2012}. More details on the physical background and the meaning of the parameters are
  given in Section~\ref{physics}. Equation \eqref{Gl freqcomb} is a nonvariational version of the stationary
  nonlinear Schr\"odinger equation with added damping and forcing. Our analysis and our results are based on the investigation of \eqref{Gl freqcomb} from the point of view of bifurcation from trivial (i.e. spatially constant) solutions.


  \subsection{Mathematical context and main results}
     
  One first notices that \eqref{Gl freqcomb} has trivial, i.e. spatially constant, solutions. They correspond to vanishing amplitudes of all modes except the $0$-mode and are
  described in detail in Lemma~\ref{trivial} below. Stable, spatially periodic patterns bifurcating from
  trivial solutions of \eqref{Gl freqcomb} were already observed in \cite{Lugiato_Lefever1987}. Recently, a
  more far-reaching bifurcation analysis appeared in \cite{Godey_et_al2014} (see also \cite{Godey_2016} for a detailed mathematical analysis) where the differential equation
  in \eqref{Gl freqcomb} is considered as a four-dimensional dynamical system in the unknowns $a_1, a_1',
  a_2, a_2'$. As the parameters $f$ and $\zeta$ change, the trivial solutions exhibit various bifurcation
  phenomena. This approach allows an extensive account of various possible types of solutions of the
  differential equations in \eqref{Gl freqcomb}. However, with this approach the $2\pi$-periodicity of the
  solutions may be lost. An emanating solution with spatial period $\tau$ has to be rescaled to the
  fixed period $2\pi$ and as a result \eqref{Gl freqcomb} is changed, e.g., $d$ becomes $(2\pi)^2 d/\tau^2$.
  A different view on bifurcation has been developed in \cite{Miyaji_Ohnishi_Tsutsumi2010}. Here spatially
  $2\pi$-periodic solutions of \eqref{Gl freqcomb} are considered via a bifurcation approach using the
  center manifold reduction. The resulting picture is very detailed in the vicinity of special parameter
  values but beyond these values nothing seems to be known -- a gap in the literature which we would like to
  fill with the present paper.

  \medskip

  Similarly to the above-mentioned papers we use bifurcation theory to prove the existence of frequency combs
  bifurcating from the set of spatially constant solutions. Let us therefore
  point out the main features which distinguish our approach from the previous ones. Unlike
  \cite{Godey_et_al2014, Godey_2016} we consider \eqref{Gl freqcomb} on certain spaces of $2\pi$-periodic functions. We obtain a very
  rich bifurcation picture which is not limited to local considerations as in
  \cite{Godey_et_al2014, Godey_2016, Miyaji_Ohnishi_Tsutsumi2010}. In Theorem~\ref{Thm 1 A priori estimates}
  and Theorem~\ref{Thm 2 nonexistence} we find a priori bounds and uniqueness results which allow us to show
  that
  \begin{itemize}
  \item[(a)] nonconstant solutions of \eqref{Gl freqcomb} only occur in the range $\sign(d)\zeta\in [\zeta_*,
  \zeta^*]$,
  \item[(b)] nonconstant solutions of \eqref{Gl freqcomb} satisfy $\|a\|_\infty+|\zeta|\leq C$.
  \end{itemize}
  Here, the values $\zeta_*, \zeta^*$ and $C$ are explicit and only depend on the parameters $f,d$.
  We begin with our results concerning pointwise a priori bounds for solutions of \eqref{Gl freqcomb} in terms
  of the parameters $\zeta,d,f$.
  
  \medskip

  \begin{theorem} \label{Thm 1 A priori estimates}
    Let $d\neq 0$, $f,\zeta\in\R$. Every solution $a\in C^2([0,2\pi],\R^2)$ of \eqref{Gl freqcomb} satisfies
    \begin{align} \label{Gl A priori est 1}
      \|a\|_{L^\infty} \leq \frac{|f|(1+12\pi^2f^2|d|^{-1})}{\max\{1,-\zeta \sign(d)-\gamma(d,f)\}},
    \end{align}
    where
    \begin{equation} \label{Gl defn gamma(d,f)}
    \gamma(d,f) := \left\{
    \begin{array}{ll}
    36\pi^2f^4|d|^{-1}, & d>0, \vspace{\jot}\\
    36\pi^2f^4|d|^{-1}+f^2(1+12\pi^2f^2|d|^{-1})^2, & d<0.
    \end{array}
    \right.
    \end{equation}
  \end{theorem}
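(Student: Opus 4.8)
The plan is to work with the complex formulation $-da'' = (\ri-\zeta)a + |a|^2a - \ri f$ and to extract everything from testing this equation against $\bar a$ and against the constant function $1$, together with a one–dimensional Sobolev inequality on the circle.

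First I would multiply the equation by $\bar a$ and integrate over $(0,2\pi)$. By periodicity $\int_0^{2\pi} a''\bar a = -\int_0^{2\pi}|a'|^2$ is real, so taking the \emph{imaginary} part of the resulting identity removes the $L^2$–norm of $a'$ and leaves $\int_0^{2\pi}|a|^2 = f\int_0^{2\pi}a_1$. Cauchy–Schwarz then gives the detuning–free bound $\|a\|_{L^2}\le\sqrt{2\pi}\,|f|$, which already controls the mean of $|a|$ by $|f|$.

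Next I would test the equation against $1$. Since $\int a''=0$, the real part yields the mean identity $\int_0^{2\pi}|a|^2a_1 = \zeta\int_0^{2\pi}a_1 + \int_0^{2\pi}a_2$. The key point is that this lets me eliminate the detuning from the real part of the $\bar a$–energy identity: inserting $\zeta\|a\|_{L^2}^2 = f(\int|a|^2a_1-\int a_2)$ turns $d\|a'\|_{L^2}^2 = \int|a|^4-\zeta\|a\|_{L^2}^2-f\int a_2$ into the $\zeta$–free identity $d\|a'\|_{L^2}^2 = \int_0^{2\pi}|a|^2\bigl(|a|^2-fa_1\bigr)$. A Sobolev/oscillation inequality of the form $\|a\|_{L^\infty}\le(2\pi)^{-1/2}\|a\|_{L^2}+c\|a'\|_{L^2}$ then reduces matters to a closed algebraic inequality for $S:=\|a\|_{L^\infty}$: the right–hand side of the $\zeta$–free identity is estimated by $\int|a|^4\le S^2\|a\|_{L^2}^2$ and $|f\int|a|^2a_1|\le|f|S\|a\|_{L^2}^2$, and then $\|a\|_{L^2}^2\le2\pi f^2$ is substituted. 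Solving the inequality produces $S\le|f|(1+12\pi^2f^2|d|^{-1})=:|f|N$. For $d<0$ the quartic term enters with the opposite sign and cannot be discarded; it must be reabsorbed through the bound $S\le|f|N$ itself, via $\int|a|^4\le S^2\|a\|_{L^2}^2\le f^2N^2\|a\|_{L^2}^2$, which is exactly the origin of the extra summand $f^2(1+12\pi^2f^2|d|^{-1})^2$ in $\gamma(d,f)$ when $d<0$.

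Finally, to obtain the detuning–dependent denominator I would read the mean identity as $-\zeta\sign(d)\,\|a\|_{L^2}^2 = \sign(d)\,f\bigl(\int a_2-\int|a|^2a_1\bigr)$; when $-\zeta\sign(d)$ is large and positive, bounding the right–hand side by $|f|\bigl(\sqrt{2\pi}\,\|a\|_{L^2}+S\|a\|_{L^2}^2\bigr)$ forces $\|a\|_{L^2}$ — and hence, through the oscillation inequality, $S$ — to be correspondingly small, upgrading the estimate to $(-\zeta\sign(d)-\gamma)S\le|f|N$. Combining this with $S\le|f|N$ and keeping the better of the two yields the factor $\max\{1,-\zeta\sign(d)-\gamma\}$ in the denominator. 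The main obstacle is precisely this reversal of direction: the energy identity on its own suggests that $\|a'\|_{L^2}$, and with it $S$, should grow with the detuning, and it is only the mean identity that converts large $|\zeta|$ into smallness of $\|a\|_{L^2}$. Keeping the signs of $d$ and of $\zeta\sign(d)$ straight while tracking the explicit constants $12\pi^2$ and $36\pi^2$ is the delicate bookkeeping.
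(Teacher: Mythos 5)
Your Steps are right up to the point where you must actually bound $\|a'\|_{L^2}$, and there the argument has a genuine gap. Your derivation of the $\zeta$--free identity $d\|a'\|_{L^2}^2=\int_0^{2\pi}|a|^2\bigl(|a|^2-fa_1\bigr)\,dx$ (by combining the real part of the $\bar a$--test with the mean identity) is correct and is in fact the same identity the paper reaches in \eqref{previous_estimate} by a different manipulation. But estimating its right--hand side by $\int|a|^4\le S^2\|a\|_{L^2}^2$ and $|f|\int|a|^2|a_1|\le |f|S\|a\|_{L^2}^2$ with $S=\|a\|_{L^\infty}$, and then feeding this into $S\le |f|+\sqrt{2\pi}\,\|a'\|_{L^2}$, produces the inequality $S\le |f|+2\pi|f|\,|d|^{-1/2}\sqrt{S^2+|f|S}$. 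For large $S$ the right--hand side grows like $2\pi|f|\,|d|^{-1/2}S$, so this inequality yields \emph{no} bound on $S$ unless $2\pi|f|<|d|^{1/2}$ --- a smallness condition violated in essentially every parameter regime of interest (e.g.\ $f=1.6$, $d=0.1$ from the paper's numerics). The inequality does not close, and even where it would, it could not produce the specific constant $|f|(1+12\pi^2f^2|d|^{-1})$.

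The missing idea is the paper's Step~2: the function $g:=|a|^2-fa_1$ is the derivative of a periodic function, hence has zero mean, vanishes somewhere, and satisfies $\|g\|_\infty\le\|g'\|_{L^1}\le(2\|a\|_{L^2}+\sqrt{2\pi}|f|)\|a'\|_{L^2}\le 3\sqrt{2\pi}|f|\,\|a'\|_{L^2}$. Inserting this into $|d|\|a'\|_{L^2}^2\le\|g\|_\infty\|a\|_{L^2}^2$ makes the right--hand side \emph{linear} in $\|a'\|_{L^2}$, so one may divide through and obtain the $\zeta$--independent gradient bound $|d|\|a'\|_{L^2}\le 6\pi f^2\|a\|_{L^2}$, from which the constant $1+12\pi^2f^2|d|^{-1}$ and the whole remaining argument follow. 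Your crude pointwise bound discards exactly this self--improving structure. A secondary issue: your final step derives the detuning--dependent denominator from the mean identity via $|f|S\le f^2(1+12\pi^2f^2|d|^{-1})$, which does not reproduce the stated $\gamma(d,f)$ (in particular not $36\pi^2f^4|d|^{-1}$ for $d>0$); the paper instead gets it from the energy identity $d\|a'\|_{L^2}^2=-\zeta\|a\|_{L^2}^2+\|a\|_{L^4}^4-f\int a_2$ combined with the Step~2 bound $|d|\|a'\|_{L^2}^2\le 36\pi^2f^4|d|^{-1}\|a\|_{L^2}^2$, dropping the quartic term when $d>0$ and absorbing it via the $L^\infty$ bound when $d<0$ (the latter you did identify correctly).
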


  \medskip
  \noindent
  {\bf Remark.} Further bounds in $L^2([0,2\pi],\R^2)$ and $H^1([0,2\pi],\R^2)$ may be extracted
  from the proof of this theorem.

  \medskip

  In our second result we employ Theorem~\ref{Thm 1 A priori estimates} to show that the set of nonconstant
  solutions is bounded with respect to $\zeta$.
  
  \medskip

  \begin{theorem}\label{Thm 2 nonexistence}
    Let $d\neq 0,f\in\R,\zeta\in\R$ and let $\zeta_*,\zeta^*$ be given by
    \begin{align*}
      \zeta_* &:=  -\gamma(d,f)-\sqrt{6}|f|(1+12\pi^2f^2|d|^{-1}),\\
      \zeta^* &:= 6f^2(1+12\pi^2f^2|d|^{-1})^2.
    \end{align*}
    Then every solution of \eqref{Gl freqcomb} is constant provided $\sign(d)\zeta<\zeta_*$ or
    $\sign(d)\zeta>\zeta^*$.
  \end{theorem}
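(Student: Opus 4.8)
The plan is to reduce the statement to a \emph{uniqueness} property: outside the window $[\zeta_*,\zeta^*]$ any solution must coincide with a spatially constant solution, and since a constant solution always exists (\lemref{trivial}), every solution is then constant. I work with the complex form $-d a'' = (\ri-\zeta)a + |a|^2 a - \ri f$ where $a = a_1 + \ri a_2$, set $s = \sign(d)$, and abbreviate $M := |f|(1 + 12\pi^2 f^2|d|^{-1})$, so that $\zeta^* = 6M^2$. The first step is to read off from \thmref{Thm 1 A priori estimates} the two regimes to be treated: if $s\zeta > \zeta^* = 6M^2$ then the denominator in \eqref{Gl A priori est 1} equals $1$ and hence $\|a\|_{L^\infty} \le M$, while if $s\zeta < \zeta_* = -\gamma(d,f) - \sqrt6\,M$ then $-s\zeta - \gamma(d,f) > \sqrt6\,M$ and \eqref{Gl A priori est 1} forces the smallness bound $\|a\|_{L^\infty} < 1/\sqrt6$.

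Next I fix a constant solution $a_0 \in \C$ of \eqref{Gl freqcomb}; it exists by \lemref{trivial} and, being itself a solution, also obeys the above $L^\infty$ bounds. For an arbitrary solution $a$ I put $v := a - a_0$. Subtracting the (constant) equation for $a_0$ eliminates the forcing term and yields $-d v'' = (\ri - \zeta)v + \big(|a|^2 a - |a_0|^2 a_0\big)$, where the cubic difference satisfies the pointwise estimate $\big||a|^2 a - |a_0|^2 a_0\big| \le 3R^2\,|v|$ with $R := \max\{\|a\|_{L^\infty}, |a_0|\}$. Testing against $\bar v$ and integrating over $[0,2\pi]$ gives, after one integration by parts,
\[
d\,\|v'\|_{L^2}^2 = (\ri - \zeta)\,\|v\|_{L^2}^2 + \int_0^{2\pi} \big(|a|^2 a - |a_0|^2 a_0\big)\bar v\,dx .
\]

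The real and imaginary parts of this single identity handle the two regimes. In the large-detuning case $s\zeta > \zeta^*$ I take the real part and multiply by $s$; discarding the nonnegative term $|d|\,\|v'\|_{L^2}^2$ and using the Lipschitz bound together with $R \le M$ leaves $(s\zeta - 3R^2)\,\|v\|_{L^2}^2 \le 0$, and since $s\zeta > 6M^2 \ge 3R^2$ the coefficient is strictly positive, forcing $v \equiv 0$. In the case $s\zeta < \zeta_*$ I instead take the imaginary part; because $\mathrm{Im}(\ri-\zeta) = 1$ this reads $\|v\|_{L^2}^2 = -\,\mathrm{Im}\int_0^{2\pi}\big(|a|^2 a - |a_0|^2 a_0\big)\bar v\,dx$, and the Lipschitz bound gives $\|v\|_{L^2}^2 \le 3R^2\,\|v\|_{L^2}^2$; since here $R < 1/\sqrt6$ we have $3R^2 < 1$ and again $v \equiv 0$. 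In both cases $a = a_0$ is constant.

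The main obstacle is the sharp bookkeeping of constants. Everything hinges on matching the Lipschitz constant of $z \mapsto |z|^2 z$ against the thresholds coming from \thmref{Thm 1 A priori estimates}: one must track the exact numerical factor multiplying $R^2$ (here $3$, which the stated constants $6$ and $\sqrt6$ comfortably accommodate) and keep the $\sign(d)$-bookkeeping consistent, so that the $\zeta$-term carries the right sign in the real-part estimate and drops out of the imaginary-part estimate. A secondary point to verify is the existence, and the a priori bound, of the constant comparison solution $a_0$ furnished by \lemref{trivial}; alternatively one may compare two arbitrary solutions and combine the resulting uniqueness with the existence of a constant solution to reach the same conclusion.
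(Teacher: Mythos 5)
Your proof is correct, but it takes a genuinely different route from the paper. The paper differentiates the equation: it shows that $A:=a'$ solves the linearized periodic system \eqref{eq_A}, rewrites this as a fixed-point equation $A=K_aA$ with $K_a=\Id\circ L_{d,\zeta}^{-1}\circ M_a$, and proves $\|K_a\|<1$ outside $[\zeta_*,\zeta^*]$ by combining the resolvent bound $\|\Id\circ L_{d,\zeta}^{-1}\|\le\min\{1,(\sign(d)\zeta)_+^{-1}\}$ of Lemma~\ref{lem operatornorm estimate} with the crude estimate $\|M_a\|\le 6\|a\|_\infty^2$; hence $a'\equiv 0$. You instead prove uniqueness by subtracting a constant solution and testing the difference equation with $\bar v$. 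The underlying mechanism is the same — the two identities \eqref{first} and \eqref{second} in the paper's lemma are exactly the real and imaginary parts of your single complex pairing, with the real part exploited for $\sign(d)\zeta>\zeta^*$ and the imaginary (damping) part for $\sign(d)\zeta<\zeta_*$ — but you apply it to $v=a-a_0$ rather than to $a'$, which removes the need for the operator-theoretic packaging (invertibility and compactness of $L_{d,\zeta}^{-1}$) and is more elementary. Two further points of comparison. First, your Lipschitz constant $3R^2$ is in fact sharper than the paper's $6\|a\|_\infty^2$: the eigenvalues of the Jacobian $M_a$ of $z\mapsto|z|^2z$ are $|a|^2$ and $3|a|^2$, so the paper's factor $6$ is not optimal, and your argument would tolerate larger thresholds; with the stated $\zeta_*,\zeta^*$ both arguments close with room to spare. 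Second, your route needs the \emph{existence} of a constant solution for the given pair $(\zeta,f)$, which the paper's route does not (it characterizes constancy by $a'\equiv0$ directly); you correctly flag this, and it is supplied by Lemma~\ref{trivial}, since $t\mapsto\hat\zeta(t)$ (respectively $s\mapsto\bar f(s)$) is continuous and surjective onto $\R$. Alternatively, as you note, comparing two arbitrary solutions yields uniqueness outright, and combining with the existence of a constant solution finishes the proof.
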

  
  \medskip

  Next we consider \eqref{Gl freqcomb} from the point of view of bifurcation theory where one of the
  two values $f$ or $\zeta$ is fixed and the other one is the bifurcation parameter. According to the two
  possible choices we identify two curves of constant solutions: $\hat\Gamma_f$ with $f$ being fixed and
  $\bar\Gamma_\zeta$ with $\zeta$ fixed, see Lemma~\ref{trivial} for explicit parametrizations of these
  curves. We investigate branches of nontrivial solutions that bifurcate from the trivial branches
  $\hat\Gamma_f$ and $\bar\Gamma_\zeta$ and obtain information about their global shape.
%
%
 %
   In our approach we consider the following special class of solutions of \eqref{Gl freqcomb}. We call them
   synchronized solutions in order to emphasize that they have a particular shape.
   
   \medskip
   
  \begin{definition} 
    A $2\pi$-periodic solution $a\in C^2([0,2\pi];\C)$ of \eqref{Gl freqcomb} is called synchronized if
    $a'(0)=a'(\pi)=0$.
  \end{definition}
  
  \medskip
  
  \noindent{\bf Remarks.} (a)\; Synchronized solutions are even around $x=0$ and $x=\pi$. The
  advantage of considering synchronized solutions is that the translation invariance of the original equation
  \eqref{Gl freqcomb} is no longer present in this Neumann boundary value problem which makes the bifurcation
  analysis much easier, see also the remark after Proposition~\ref{Prop simple kernels}.
  
  \smallskip
  
  (b)\; It would be interesting to find out whether \eqref{Gl freqcomb} admits solutions which are
  not synchronized. Note that in the case of scalar periodic boundary value problems the restriction to
  homogeneous Neumann boundary conditions on an interval of half the period is natural since (up to a shift)
  all solutions satisfy this condition. However, in the system case this is not clear at all and we have to leave it as an open problem.
  
  \medskip

  Before we state our results let us recall some common notions in bifurcation theory.
  In the context of bifurcation from $\hat\Gamma_f$ (Theorem~\ref{Thm 3 bifurcation
  zeta}) a pair $(a,\zeta)$ is called a trivial solution if it is spatially constant, i.e., if $(a,\zeta)$
  lies on $\hat\Gamma_f$. A trivial solution $(a,\zeta)$ is called a bifurcation point if a sequence
  $(a_k,\zeta_k)_{k\in \N}$ of non-trivial solutions of the periodic system~\eqref{Gl freqcomb} converges to
  $(a,\zeta)$. Similarly trivial solutions and bifurcation points are defined when bifurcation from
  $\bar\Gamma_\zeta$ is investigated, see Theorem~\ref{Thm 4 bifurcation f}.
%
%
  Since our analysis of synchronized solutions is based on the bifurcation theorem of Crandall-Rabinowitz
  \cite{CrRab_bifurcation} such bifurcating non-trivial solutions lie on local curves around the bifurcation
  points. Furthermore, we will use the global bifurcation theorem of Rabinowitz \cite{Rab_some_global} to show
  that these curves are part of a connected set that is unbounded or returns to the curve of trivial solutions
  at some other point. A continuum satisfying one of these two properties will be a called global continuum.
  If, additionally, the nontrivial  (i.e. nonconstant) solutions from this continuum are confined in a bounded
  subset of $L^\infty([0,2\pi],\R^2)\times\R$ then the continuum will be called bounded.

  \medskip

  For fixed $f\in\R$ we find that at most finitely many global continua bifurcate from $\hat\Gamma_f$. These
  continua are bounded and intersect $\hat\Gamma_f$ at another trivial solution. For the reader's convenience
  this is illustrated in the plots of Figures~\ref{list_branches_3}--\ref{solutions_branches_4}. The result
  reads as follows.

  \medskip

  \begin{theorem}\label{Thm 3 bifurcation zeta}
    Let $d\neq 0,f\in\R$. If $|f|<1$ then the curve $\hat\Gamma_f$ (see  Lemma~\ref{trivial}
    (a)) does not contain any bifurcation point for \eqref{Gl freqcomb}. In case $|f|\geq 1$ the following holds:
    \begin{itemize}
      \item[(i)] All bifurcation points are among the points
      $(\hat a_1(t),\hat a_2(t),\hat\zeta(t))$ where the number
      $t\in [-\sqrt{1-|f|^{-2}},+\sqrt{1-|f|^{-2}}]$ satisfies
      \begin{equation}\label{Gl bifpoints Thm3}
	    dk^2
	    = f^2(1-t^2) - \frac{t}{\sqrt{1-t^2}} -\sigma \sqrt{f^4(1-t^2)^2-1}
      \end{equation}
      for some $k\in\N$ and some $\sigma\in\{-1, 1\}$.
    \item[(ii)] The curve $\hat\Gamma_f$ contains at most $\hat k(f)$ bifurcation points for \eqref{Gl
      freqcomb} where
      $$
        \hat k(f):= 2 \big( |d|^{-1}\big(f^2+\sqrt{f^2-1}+\sqrt{f^4-1}\big)\big)^{1/2}.
      $$
      \item[(iii)] If in addition to \eqref{Gl bifpoints Thm3} one has
      \begin{itemize}
        \item[(S)]$-k^2+2d^{-1}(f^2(1-t^2)- t(1-t^2)^{-1/2}) \neq j^2$ for all $j\in
        \N_0\setminus\{k\}$,
        \item[(T)] $4 f^6 t^3(1-t^2)^2+f^4(1-t^2)^{1/2}-2tf^2-(1-t^2)^{-3/2}$ \\
                   $-\sigma\sqrt{f^4(1-t^2)^2-1}\Bigl(4f^4 t^3(1-t^2)+f^2(2t^2-1)(1-t^2)^{-1/2}\Bigr)\neq 0$
      \end{itemize}
      then a global continuum containing nontrivial synchronized solutions emanates from $(\hat a_1(t),\hat
      a_2(t),\hat\zeta(t))$. This continuum is bounded and it returns to $\hat\Gamma_f$ at some other point.
      In a neighbourhood of $(\hat a_1(t),\hat
      a_2(t),\hat\zeta(t))$ the continuum is a curve consisting of $\tfrac{2\pi}{k}$-periodic
      nontrivial synchronized solutions.
    \end{itemize}
  \end{theorem}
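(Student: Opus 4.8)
The plan is to recast \eqref{Gl freqcomb} as a bifurcation problem on the space of even $2\pi$-periodic functions (synchronized solutions are even about $0$ and $\pi$, hence expand in the modes $\cos(kx)$) and to invoke the theorems of Crandall--Rabinowitz and Rabinowitz. Writing $a=a_1+\ri a_2$, the spatially constant solutions solve $(\ri-\zeta+|a|^2)a=\ri f$, which gives the parametrization of $\hat\Gamma_f$ in Lemma~\ref{trivial} with $b:=|a|^2=f^2(1-t^2)$ and $2b-\hat\zeta(t)=f^2(1-t^2)-t(1-t^2)^{-1/2}$. First I would linearize at a trivial solution $(\hat a(t),\hat\zeta(t))$ and expand the perturbation in $\cos(kx)$, which splits the linearized system into $2\times2$ blocks. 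A short computation shows the $k$-th block has determinant $\mu^2-4\mu b+3b^2+1$ with $\mu:=dk^2+\zeta$, whose discriminant in $\mu$ is $4(b^2-1)$; hence a nontrivial kernel forces $b\ge1$, and solving the quadratic gives $dk^2=2b-\zeta\pm\sqrt{b^2-1}$. Substituting the parametrization turns this into \eqref{Gl bifpoints Thm3}. If $|f|<1$ then $b\le f^2<1$ on all of $\hat\Gamma_f$, the block determinants never vanish, the linearization is invertible, and the implicit function theorem forbids bifurcation. If $|f|\ge1$ the same computation shows bifurcation can occur only where \eqref{Gl bifpoints Thm3} holds for some $k\in\N$ and $\sigma\in\{-1,1\}$, which is (i); the constraint $b\ge1$ is exactly $t\in[-\sqrt{1-|f|^{-2}},\sqrt{1-|f|^{-2}}]$.

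For (ii) I would bound the admissible wave numbers. On the admissible interval $b\in[1,f^2]$, the quantity $s:=b-\zeta=-t(1-t^2)^{-1/2}$ satisfies $|s|\le\sqrt{f^2-1}$ and $\sqrt{b^2-1}\le\sqrt{f^4-1}$. Since $2b-\zeta=b+s$, relation \eqref{Gl bifpoints Thm3} gives $|d|k^2\le f^2+\sqrt{f^2-1}+\sqrt{f^4-1}$, so only $k\le(|d|^{-1}(f^2+\sqrt{f^2-1}+\sqrt{f^4-1}))^{1/2}$ can occur. Accounting for the two signs $\sigma$ and for the solutions $t$ of \eqref{Gl bifpoints Thm3} associated with each admissible mode, an elementary count then produces the stated bound $\hat k(f)$.

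For (iii) the plan is to check the hypotheses of the Crandall--Rabinowitz theorem at a point where \eqref{Gl bifpoints Thm3} holds with $b>1$. Condition (S) secures a one-dimensional kernel: a second block $j\neq k$ is singular exactly when $dj^2=2b-\zeta+\sigma\sqrt{b^2-1}$, i.e.\ when $j^2+k^2=2d^{-1}(2b-\zeta)=2d^{-1}(f^2(1-t^2)-t(1-t^2)^{-1/2})$, while the remaining alternative $dj^2=dk^2$ forces $j=k$; thus (S) rules out every competing mode and the kernel is spanned by one vector $(c_1,c_2)\cos(kx)$. Next I would write out the transversality condition, namely that the cokernel vector paired with $\partial_\zeta$ of the $k$-th block applied to $(c_1,c_2)$ is nonzero, where $\partial_\zeta$ must also capture the motion $\tfrac{d\hat a}{d\zeta}$ of the trivial branch. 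Evaluating the eigenvector and cokernel explicitly and simplifying yields precisely the expression in (T), so (T) is the transversality requirement. With (S) and (T), Crandall--Rabinowitz furnishes a local curve of nontrivial synchronized solutions through $(\hat a(t),\hat\zeta(t))$; as these are generated to leading order by $\cos(kx)$, and the whole construction can be carried out in the space of $\tfrac{2\pi}{k}$-periodic even functions, they are $\tfrac{2\pi}{k}$-periodic.

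Finally I would recast the equation as a compact fixed-point problem $a=K(a,\zeta)$, using that $(-d\,\partial_{xx}+c)^{-1}$ is compact on even $2\pi$-periodic functions for suitable $c$, and apply Rabinowitz' global bifurcation theorem. This embeds the local curve into a global continuum $\mathcal C$ that is either unbounded in $L^\infty([0,2\pi],\R^2)\times\R$ or meets $\hat\Gamma_f$ at a second point. Here the a priori bounds of Theorem~\ref{Thm 1 A priori estimates} and Theorem~\ref{Thm 2 nonexistence} settle the alternative: every nonconstant solution obeys $\|a\|_\infty+|\zeta|\le C$, so $\mathcal C$ cannot be unbounded and must return to $\hat\Gamma_f$, giving the bounded continuum claimed. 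I expect the transversality computation to be the main obstacle, since propagating the $\zeta$-dependence of the trivial branch through the linearization, projecting onto the one-dimensional cokernel, and reducing the result to the exact algebraic form (T) is the delicate part; by contrast the global structure follows cleanly once the a priori bounds are in hand.
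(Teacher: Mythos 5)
Your proposal follows essentially the same route as the paper: the Neumann/even-function reformulation to remove translation invariance, the $2\times 2$ Fourier-block analysis of the linearization giving the quadratic $(\zeta+dk^2)^2-4|a|^2(\zeta+dk^2)+1+3|a|^4=0$ and hence (i) and (ii), simplicity of the kernel from (S) via the sum of the two roots, transversality from (T), and Crandall--Rabinowitz locally plus Rabinowitz globally with the a priori bounds of Theorems~\ref{Thm 1 A priori estimates} and \ref{Thm 2 nonexistence} closing the alternative. The only points the paper treats that you pass over are the degenerate cases in which the generic kernel/cokernel eigenvector formulas vanish (there transversality holds automatically) and the need to differentiate along the curve parameter $t$ rather than along $\zeta$ itself, since $t\mapsto\hat\zeta(t)$ need not be injective.
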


    \medskip
  \noindent
  {\bf Remarks.} The numbers $k$ and $t$ in \eqref{Gl bifpoints Thm3} have the following meaning: if \eqref{Gl freqcomb} is linearized at $(\hat a_1(t),\hat a_2(t),\hat\zeta(t))$ then $\alpha^k\cos(kx)$ with $\alpha^k\in \R^2\setminus\{(0,0)\}$ solves this linearized equation. From this fact we can draw several conclusion that are worth to be mentioned. The following remarks (a)-(d) are equally valid in the context of Theorem~\ref{Thm 4 bifurcation f}.

  \smallskip
  
  (a)\; Whenever $t_0$ is a turning point of the curve of trivial solutions, i.e. $\hat\zeta'(t_0)=0$, then \eqref{Gl bifpoints Thm3} is satisfied for $k=0$. Since  in this case typically (S) and (T) also hold a bifurcation from turning points $\hat\Gamma_f$ is predicted which, however, is not visible in any bifucation diagram. The fact that this is not a contradiction to the Crandall-Rabinowitz Theorem is explained in detail at the beginnig of Section~\ref{subsec Det bifpoints}.

  \smallskip

  (b)\; The integer $k$ also represents the number of maxima of the bifurcating solutions close to the bifurcation point. In accordance with our numerical experiments this leads to the rule of thumb that $k$-solitons may be found on branches indexed with $k$. As mentioned in (a), at turning points of the curve of trivial solutions \eqref{Gl bifpoints Thm3} is satisfied for $k=0$ and some $t_0$. Therefore, if $|d|$ is small enough, then \eqref{Gl bifpoints Thm3} typically admits a solution for $k=1$ and some $t_1$ near $t_0$. Therefore, bifurcating branches with 1-solitons typically emanate near turning points of the curve of trivial solutions. Our numerical experiments in Section~\ref{illustrations} confirm this.
  
  \smallskip
    
  (c)\;  Suppose we have a bifurcation point corresponding to some numbers $k, t$. If we consider \eqref{Gl freqcomb} as an equation in the space of $2\pi/k$-periodic synchronized solutions then the Crandall-Rabinowitz Theorem shows the existence of a bifurcating branch consisting entirely of $2\pi/k$-periodic synchronized solutions. This branch returns to the trivial curve at bifurcation points associated to multiples of $k$. A typical example can be found in the left bifurcation diagram of Figure~\ref{bif_diagram2_zeta=10} in Section~\ref{zeta=10,d=-0.2}. Within the broader space of synchronized solutions the branches consisting of $2\pi/k$-periodic solutions persist but additional connections to the trivial branches can exist as the right bifurcation diagram in Figure~\ref{bif_diagram2_zeta=10} shows. 
  
  Another consequence of this observation can be seen in Figure~\ref{bif_diagram_f=1.6} in Section~\ref{f=1.6,d=0.1} for the parameter values $f=1.6, d=0.1$. The theorem predicts bifurcation points precisely for $k\in\{1,2,3,4,5,6,7\}$, see Figure~\ref{list_branches_3}. For $k\in\{4,5,6,7\}$ any integer multiple is larger than $7$. Therefore continua emanating from the bifurcation points associated to $k\in\{4,5,6,7\}$ have to return to the trivial branch at bifurcation points associated with the same $k$. This corresponds to the green, red, blue and pink branches in Figure~\ref{bif_diagram_f=1.6}.

  \smallskip
   
  (d)\; The bifurcation point divides the bifurcating curve from  (iii) into two pieces.
  Each solution on one piece correponds to a solution on the other via a phase shift by $\pi/k$. In
  other words two such solutions $a,\tilde a$ are related by $\tilde a(x)=a(x+\pi/k)$. Since 
  their $L^2$-norms coincide the AUTO plots in Section~\ref{illustrations}
  only show one curve near every bifurcation point. Accordingly, each point on this curve represents two
  solutions with the same $L^2$-norm. 
  
  \medskip

  Now let us state the corresponding result for the bifurcation analysis associated to the family of trivial
  solutions $\bar\Gamma_\zeta$ for given $\zeta\in\R$. In contrast to the above results we find that
  infinitely many global continua emanate from $\bar\Gamma_\zeta$ in the case $d>0$ whereas for $d<0$ again
  only finitely many such continua can exist. The numerical plots from
  Figures~\ref{list_branches_1}--\ref{solitons_f} illustrate our results. Although we do not have a proof for
  the boundedness of the continua in this case, it seems nevertheless plausible in view of the numerical
  plots. 

  \medskip

  \begin{theorem}\label{Thm 4 bifurcation f}
    Let $d\neq 0,\zeta\in\R$.
    \begin{itemize}
    \item[(i)] All bifurcation points on the curve $\bar\Gamma_\zeta$ (see Lemma~\ref{trivial} (b)) are among the points $(\bar a_1(s),\bar a_2(s),\bar f(s))$  where 
    \begin{equation} \label{Gl bifpoints Thm4}
       |s| = \Big( \frac{2}{3}(\zeta+dk^2) -
          \frac{\sigma}{3}\sqrt{(\zeta+dk^2)^2-3} \Big)^{1/2}
    \end{equation}
    for some $k\in\N,\sigma\in\{-1,1\}$ and provided $\zeta+dk^2\geq \sqrt{3}$.
    \item[(ii)]
      If $d<0$ then the curve $\bar\Gamma_\zeta$ contains at most $\bar k(\zeta)$
      bifurcation points for \eqref{Gl freqcomb} where
      $$
        \bar k(\zeta):= 4\big(|d|^{-1}(\zeta-\sqrt{3})_+\big)^{1/2}.
      $$
    \item[(iii)] If in addition to \eqref{Gl bifpoints Thm4} one has
      \begin{itemize}
        \item[(S)] $-k^2+\frac{2}{3}d^{-1}(\zeta+4dk^2-2\sigma\sqrt{(\zeta+dk^2)^2-3})\neq
        j^2$\; for all $j\in \N_0\setminus\{k\}$,
        \item[(T)] $\zeta+dk^2\neq \sqrt{3}$\; and\; $4\zeta+dk^2-2\sigma\sqrt{(\zeta+dk^2)^2-3}\neq 0$\;
        and \\$2\zeta+ 5dk^2-4\sigma\sqrt{(\zeta+dk^2)^2-3}\neq 0$
      \end{itemize}
      then a global continuum containing synchronized and
      $\tfrac{2\pi}{k}$-periodic nontrivial solutions bifurcates from $(\bar
      a_1(s),\bar a_2(s),\bar f(s))$. 
      In a neighbourhood of this point the continuum is a curve consisting of
      $\tfrac{2\pi}{k}$-periodic nontrivial synchronized solutions.
    \end{itemize}
  \end{theorem}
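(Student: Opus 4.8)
The plan is to realise synchronized solutions as zeros of a nonlinear map and to bifurcate along the trivial branch $\bar\Gamma_\zeta$ using the theorems of Crandall--Rabinowitz \cite{CrRab_bifurcation} and Rabinowitz \cite{Rab_some_global}. I would work in the real Banach space $X_k$ of $\tfrac{2\pi}{k}$-periodic synchronized functions, i.e.\ those with a pure cosine expansion $\sum_{j\geq0}\alpha_j\cos(jkx)$, $\alpha_j\in\R^2$; the conditions $a'(0)=a'(\pi)=0$ are then automatic and, as remarked after the Definition, the translation invariance of \eqref{Gl freqcomb} is gone. With $c:=\bar a_1(s)+\ri\bar a_2(s)$ denoting the constant solution on $\bar\Gamma_\zeta$ (Lemma~\ref{trivial}(b)), whose modulus satisfies $|c|=|s|=:\rho$, I set $F(a,f):=-da''-(\ri-\zeta)a-|a|^2a+\ri f$ and, after inverting $-d\partial_x^2+\lambda$ for a real $\lambda\notin\{-dk^2:k\in\N_0\}$, rewrite $F(a,f)=0$ as $a=\cK(a,f)$ with $\cK$ compact on $X_k\times\R$. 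The substitution $a=c+w$ turns $s$ into the bifurcation parameter and $w=0$ into the trivial solution.

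\textbf{Candidate points (i) and the count (ii).} Differentiating $|a|^2a=a^2\overline a$ gives at $c$ the linearisation
\[
  D_aF(c,f)[\phi]=-d\phi''+(\zeta-\ri-2\rho^2)\phi-c^2\overline\phi .
\]
Testing with $\phi=\alpha\cos(kx)$, $\alpha\in\C$, reduces the kernel equation to $(\beta-\ri)\alpha=c^2\overline\alpha$ with $\beta:=dk^2+\zeta-2\rho^2$; taking absolute values gives the scalar condition $\beta^2+1=\rho^4$, that is
\[
  3\rho^4-4(\zeta+dk^2)\rho^2+(\zeta+dk^2)^2+1=0 .
\]
Solving for $\rho^2=s^2$ yields exactly \eqref{Gl bifpoints Thm4}, the two roots corresponding to $\sigma=\pm1$ and existing precisely when $\zeta+dk^2\geq\sqrt3$; this is (i). For (ii), when $d<0$ the admissibility $\zeta+dk^2\geq\sqrt3$ forces $k^2\leq|d|^{-1}(\zeta-\sqrt3)_+$, so only finitely many $k\in\N$ remain, each producing at most four points (two signs of $\sigma$, two signs of $s$); this gives the bound $\bar k(\zeta)$. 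For $d>0$ the inequality holds for all large $k$, which accounts for the infinitely many branches announced before the theorem.

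\textbf{Local bifurcation (iii).} For Crandall--Rabinowitz I must check that $\kernel D_aF$ is one-dimensional and verify transversality. Because the real determinant of $z\mapsto(\beta-\ri)z-c^2\overline z$ equals $\beta^2+1-\rho^4$, the mode $\cos(mx)$ lies in the kernel iff $(dm^2+\zeta-2\rho^2)^2=(dk^2+\zeta-2\rho^2)^2$, i.e.\ $m=k$ or $m^2=-k^2+\tfrac{2}{d}(2\rho^2-\zeta)$; inserting the value of $\rho^2$ identifies the last quantity with the expression in (S), so (S) excludes every competing mode, including each multiple $jk$ $(j\geq2)$ surviving in $X_k$. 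Since the $2\times2$ block at the mode $k$ cannot have rank $0$ (its off-diagonal entries $1-2\bar a_1\bar a_2$ and $-1-2\bar a_1\bar a_2$ do not vanish simultaneously), $\kernel D_aF=\spann\{\phi_0\}$ with $\phi_0=\alpha\cos(kx)$. Letting $\psi_0$ span $\kernel (D_aF)^*$, the transversality condition $\partial_s D_aF(c,f)[\phi_0]\notin\range D_aF$ reduces, via the Fredholm alternative and the fact that $\partial_s$ only hits the $a$-dependent coefficients $-2\rho^2$ and $-c^2$, to the scalar inequality $\langle\psi_0,(\partial_sM)\phi_0\rangle\neq0$, where $M$ is the zeroth-order matrix. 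Here the non-fold factor $\zeta+dk^2\neq\sqrt3$ makes $\tfrac{d}{d\rho^2}(\beta^2+1-\rho^4)=-2\sigma\sqrt{(\zeta+dk^2)^2-3}$ nonzero, so the simple eigenvalue genuinely crosses $0$, while the two remaining factors in (T) are precisely the requirement that this pairing not vanish.

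\textbf{Global continuation and the main obstacle.} Feeding the verified data into Rabinowitz' theorem, the transversal crossing of the simple eigenvalue yields crossing number one, so the component $\cC\subset X_k\times\R$ of nontrivial solutions through $(\bar a_1(s),\bar a_2(s),\bar f(s))$ is a global continuum, either unbounded or returning to $\bar\Gamma_\zeta$; as everything is carried out in $X_k$, every solution on $\cC$ is $\tfrac{2\pi}{k}$-periodic and synchronized, proving (iii). Boundedness is not claimed, and I would not attempt to prove it. The hard part will be the transversality computation: pushing $\partial_s$ through the parametrisation of $\bar\Gamma_\zeta$ and through the complex kernel and cokernel vectors, and then recognising the resulting scalar as the product of the three factors in (T) --- in particular, sorting out which algebraic degeneracy (the fold $\zeta+dk^2=\sqrt3$ versus the two genuine non-crossing loci) belongs to which factor is the delicate bookkeeping, mirroring the $k=0$ turning-point subtlety flagged in the remarks to Theorem~\ref{Thm 3 bifurcation zeta}.
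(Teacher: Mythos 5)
Your overall strategy is the same as the paper's: recast \eqref{Gl freqcomb_neumann} as a compact fixed-point problem, analyse the linearisation at the constant solutions mode by mode to get the necessary condition $(\zeta+dk^2-2s^2)^2=s^4-1$ (your complex form $\beta^2+1=\rho^4$ with $\beta=dk^2+\zeta-2\rho^2$ is exactly the paper's determinant condition), count the admissible $k$ for $d<0$, identify (S) with simplicity of the kernel, and then invoke Crandall--Rabinowitz plus Rabinowitz' global theorem. Parts (i) and (ii) and the simplicity argument are correct, and your complex formulation is in fact slightly cleaner than the paper's, since it gives a uniform one-line argument that the $2\times 2$ block at mode $k$ has rank one, whereas the paper must split the kernel/cokernel vectors into three cases ($a_1a_2\neq\pm\tfrac12$ etc.) and track them through the transversality computation. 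Working directly in the $\tfrac{2\pi}{k}$-periodic subspace $X_k$ rather than in $H^1([0,\pi];\R^2)$ is also legitimate and yields the local $\tfrac{2\pi}{k}$-periodicity for free.

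The genuine gap is the transversality step, which is precisely where condition (T) lives. You reduce the Crandall--Rabinowitz condition to a single pairing $\langle \bar F_{as}(0,s)[\phi],\phi^*\rangle$ and then \emph{assert} that it vanishes exactly when one of the three factors in (T) vanishes; your proposed decomposition into ``eigenvalue crossing speed $\times$ pairing'' is only a heuristic, and the identification of which factor plays which role is not something you can read off without the computation. The paper's proof of (iii) consists essentially of carrying this out: one computes $T(\dot a)=\langle G_{aa}(a,\zeta,f)[\phi,\dot a],\phi^*\rangle_H$ explicitly (using $a^T\dot a=s$ and $G_{af}[\phi]=0$, so only the quadratic term contributes here) and, after substituting $s^2$ from \eqref{Gl bifpoints Thm4}, obtains the product
$\frac{2\pi\sigma s}{27(1+(s^2-\zeta)^2)}\sqrt{(\zeta+dk^2)^2-3}\,\bigl(2\zeta+5dk^2-4\sigma\sqrt{\cdot}\bigr)\bigl(4\zeta+dk^2-2\sigma\sqrt{\cdot}\bigr)\bigl(\zeta+dk^2+\sigma\sqrt{\cdot}\bigr)$,
whose last factor is automatically nonzero; only then is it clear that (T) is the right hypothesis. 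Until that algebra is done your (iii) is unproved. Two smaller omissions: you never address the $k=0$ modes, which also produce nontrivial kernels (the constant mode lies in $X_k$), so your claim in (i) that all bifurcation points correspond to some $k\in\N$ needs the paper's separate argument that $k=0$ kernels arise only from turning points of the non-injective parametrisation $s\mapsto \bar f(s)$ and do not yield bifurcation in $(a,f)$; and you should note that $s\neq 0$ at any candidate point (since $|s|=|a|\geq 1$ there), which is needed for the prefactor of the transversality expression to be nonzero.
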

%

  \medskip

  \noindent
  {\bf Remarks.} (a)\; We may restrict our attention to the bifurcations ocurring for positive $f$. Indeed,
  by Theorem \ref{Thm 1 A priori estimates} we have that for $f=0$ only the trivial solution $a=(0,0)$ exists. In
  particular solution continua cannot cross the value $f=0$ so that it is sufficient to analyze the
  bifurcations for positive $f$. The bifurcations for negative $f$ can be found via the discrete symmetry of
  \eqref{Gl freqcomb} given by $(a_1,a_2,f)\mapsto (-a_1,-a_2,-f)$.
  
  \smallskip
  
  \;(b)\; In principle the exceptional points where the conditions (S) and (T) are not satisfied
  could be analyzed using different bifurcation theorems. Suitable candidates for a bifurcation
  theorem in the presence of two-dimensional kernels are the theorems of Healey,
  Kielh\"ofer, Kr\"omer \cite{Kroe_Hea_Kie_bifurcation} and Westreich \cite{We_bifurcation_at}. Bifurcation results without
  transversality condition can be found in \cite{Liu_Shi_Wang_bifurcation}. However, the required amount of
  calculations are far too high to justify the use of these theorems in our situation. For
  the same reason we did not include a detailed analysis of the initial directions of the bifurcation branches which,
  without any theoretical difficulty, may be calculated using the formulas from section~I.6 in Kielh\"ofer's
  book~\cite{Kielh_bifurcation_theory}.



 \subsection{Frequency combs in physics and engineering} \label{physics}

  Currently frequency combs are gaining interest as optical sources for high-speed data transmission where
  the individual comb lines are used as carriers. A high power per combline with the same spectral power distribution is
  important. An experimental set-up for such frequency combs is given by a microresonator which is coupled to an
  optical waveguide under the influence of a single, strong, external laser source that is tuned to a
  resonance wavelength of the device. Inside the resonator the optical intensity is strongly enhanced and
  modes start to interact in a nonlinear way. As a consequence, the primarily excited mode couples with a
  multitude of neighboring modes. This leads to a cascaded transfer of power from the pump to the comb lines.
  Under suitable choice of parameters, a stationary cascade of excited modes can be obtained and results in
  a stable frequency comb with equidistant spectral lines.
  %
  %
  If $\hat a_k(t)$ denotes the dimensionless complex-valued amplitude of the $k$-th mode in the
  microresonator at time $t$ then, following \cite{chembo_2010,herr_2012}, it satisfies
  the following set of coupled differential equations
  \begin{equation}
     \ri\partial_t \hat a_k(t) = (- \ri+\zeta)\hat a_k(t) +d k^2 \hat a_k(t)
    -\sum_{k',k''\in\Z} \hat a_{k'}(t) \hat a_{k''}(t)\overline{\hat a}_{k'+k''-k}(t)+ \ri\delta_{0k}f
    \label{basic.F}
  \end{equation}
  for each $k\in\Z$.
  In this equation, the parameters $\zeta, d, f$ are real and $t$ is normalized time. The term
  $\ri\delta_{0k}f$ corresponds to forcing by the external pump, $\zeta$ represents the normalized frequency
  detuning between the source and the principal resonance of the microresonator, and $d$ quantifies the
  dispersion in the system. The case $d<0$ corresponds to normal dispersion whereas $d>0$ is called the
  anomalous regime, cf.~\cite{Godey_et_al2014, Godey_2016}. The loss of power due to radiation and wave\-guide coupling is
  modeled by the damping term $- \ri \hat a_k(t)$. In the literature a stationary
  solution of \eqref{basic.F} is called a frequency comb.

  \medskip

  Via the Fourier series $a(x,t)= \sum_{k\in \Z} \hat a_k(t)  e^{ \ri kx}$ frequency combs can equally be
  defined as stationary solutions of the Lugiato-Lefever equation
  \begin{equation}
     \ri\partial_t a(x,t) = (- \ri+\zeta)a(x,t)-d \partial_x^2 a(x,t) -|a(x,t)|^2 a(x,t)+ \ri f,
    \quad x\in \R/2\pi\mathbb{Z}, \; t \in \R.
	\label{basic}
  \end{equation}
  It was originally proposed in \cite{Lugiato_Lefever1987} as a model for the envelope of a field transmitted
  through a nonlinearly responding optical cavity. It resembles a nonlinear Schr\"odinger equation with added
  damping and forcing. Stationary solutions of \eqref{basic} of the form $a=a_1+\ri a_2$ correspond to
  solutions $(a_1, a_2)$ of \eqref{Gl freqcomb}.

  \medskip

  The experimental generation of frequency combs in microresonators has been demonstrated many times, cf. the
  review paper \cite{kippenberg_2011}. One of the first demonstrations \cite{haye_2007} used a toroidal
  fused-silica microresonator. In \cite{herr_2012} the dynamics of the Kerr comb formation process is
  experimentally explored and found to be independent of the resonator material system and geometry. One of
  the first theoretical papers \cite{chembo_2010} marks the starting point for a series of subsequent
  investigations and publications. In this paper a numerical simulation of Kerr frequency combs is given
  based on \eqref{basic.F}, i.e., the modal expansion of the fields. A considerable computational effort is
  needed to handle the multitude of coupled differential equations. Nevertheless, a detailed account of the
  temporal dynamics is supplied and analytical expressions (approximations) of the distance of primary comb
  lines in terms of resonator and pump parameters are derived.

  \medskip

  As one can see in Section~\ref{illustrations} there are many different shapes of frequency combs.
  Of particular interest are so-called soliton combs. These are stationary solutions of \eqref{basic} which
  are highly localized in space. Accordingly their frequency spectrum shows many densely spaced comb lines;
  cf. Figures~\ref{solitons_f}, \ref{many_solitons}, \ref{many_solitons2} and \ref{solitons_zeta}.
  Moreover, the power of the $k$-th excited frequency in a soliton comb is much higher than the $k$-th
  excited frequency in a comb with sparse frequency spectrum, cf. Figure~\ref{turing_f_2}. Since these properties of soliton combs are very desirable for high-speed data
  transmission, they received attention in recent literature. In \cite{coen2013a} a numerical study of pump
  and resonator parameters and their effect on the bandwidth of Kerr combs was performed, and first
  indications appeared that soliton combs can only be achieved by a special tuning of the pump parameters.
  According to the simulations presented in \cite{Erkintalo2014} these solitons show a high coherence along
  with a high number of comb lines with flat power distribution. The first experimental proof of soliton
  combs was done in \cite{Herr2013}. The effect of higher order dispersion terms is the topic of
  \cite{Parra-Rivas2014}. It is shown that incorporating third-order dispersion terms into the model enlarges
  the parameter ranges where stable soliton combs exist. In \cite{Wang2014} the effect of higher order
  dispersion on the comb shape is discussed.

  \subsection{Further mathematical results} \label{further}

  A rigorous study of the time-dependent problem \eqref{basic} both from the analytical and from the numerical
  point of view was recently given in \cite{jami:14}. Applying Theorem~2.1 of \cite{jami:14} to the function
  $a(x,t)e^{\ri\zeta t}$ one obtains that for $d=1$ and initial data lying in $H^4_{\rm per}([0,2\pi];\C)$ the
  initial value problem associated to \eqref{basic} admits a unique solution
  \begin{equation*}
    a \in C(\R_+;H^4_{\rm per}([0,2\pi];\C))\cap C^1(\R_+;H^2_{\rm per}([0,2\pi];\C))\cap
    C^2(\R_+;L^2_{\rm per}([0,2\pi];\C))
  \end{equation*}
  satisfying the additional bounds $\|a(t)\|_2\leq C, \|a(t)\|_{H^1}\leq C\sqrt{1+t}$ for some positive
  number $C>0$ which is independent of $t$. Furthermore, the paper provides a detailed analysis of the
  Strang splitting associated to \eqref{basic} including error bounds in
  $L^2_{\rm per}([0,2\pi];\C),H^1_{\rm per}([0,2\pi];\C)$ as well as estimates related to the stability properties of
  the numerical scheme. Further numerical and analytical results related to periodically forced and damped NLS
  may be found in \cite{bishop:90,haller:99}.

  \subsection{Structure of the paper}

  The paper is organized as follows. In Section~\ref{setup} we provide the functional analytical
  framework for our analysis. This includes an appropriate choice of the function spaces
  and corresponding solution concepts. In Section~\ref{proof_of_theorem1and2} the proofs of Theorem~\ref{Thm 1
  A priori estimates} (a~priori bounds) and Theorem~\ref{Thm 2 nonexistence} (uniqueness)
  are given. The proofs of the bifurcation results from Theorem~\ref{Thm 3 bifurcation zeta} and \ref{Thm 4
  bifurcation f} can be found in Section~\ref{proof_of_theorem3and4}. Section~\ref{illustrations}
  contains illustrations with tables of bifurcation points, bifurcation diagrams and plots of approximate solutions. In the final Section~\ref{conclusions} we draw conclusions from our results and formulate some open questions.

 \subsection{On the generation of the numerical plots} The illustrations in Sections~\ref{zeta=0,d=0.1}--\ref{f=2,d=-0.1} were
 created with the software package AUTO. It is a free software which determines bifurcation points,
 approximations of solutions and generates bifurcation diagrams. It can be downloaded from
 \texttt{indy.cs.concordia.ca/auto/}. We postprocessed the outupt of AUTO by a MATLAB program to improve the
 quality of the approximated solutions of \eqref{Gl freqcomb} via several Newton iterations and to compute
 the Fourier coefficients of the improved approximated solutions. The MATLAB program also produces .pdf files
 of plotted solutions and their Fourier coefficients. A .zip file containing a README-description and driver
 files for the code can be downloaded freely from
 \texttt{www.waves.kit.edu/downloads/CRC1173\_Preprint\_2016-7\_supplement.zip}. \linebreak By running the
 driver files AUTO and MATLAB will be invoked and generate all plots of Sections~\ref{zeta=0,d=0.1}--\ref{f=2,d=-0.1}.

  \section{Mathematical setup} \label{setup}

  First we describe the spaces of solutions in which our analysis works. A weak solution $a\in
  H^1([0,2\pi];\R^2)$ of \eqref{Gl freqcomb} will be called a solution for the sake of simplicity.
  Notice that every such solution coincides almost everywhere with a smooth classical solution of the equation
  so that regularity issues will not play a role in the sequel and all solution concepts in fact coincide.
  In the context of Theorems~\ref{Thm 1 A priori estimates} and \ref{Thm 2 nonexistence} it is convenient to
  consider $2\pi$-periodic classical solutions. For the proof of Theorem~\ref{Thm 2 nonexistence} the space
  $H^2_{\rm per}([0,2\pi];\R^2)$ will be useful. In the context of the bifurcation results of
  Theorems~\ref{Thm 3 bifurcation zeta} and \ref{Thm 4 bifurcation f} we consider synchronized solutions of
  \eqref{Gl freqcomb}, i.e. solutions that satisfy additionally $a'(0)=a'(\pi)=0$. If we set
  $H:=H^1([0,\pi],\R^2)$ then $a=(a_1,a_2)$ is a synchronized solution of \eqref{Gl freqcomb} if and only if
  $a=(a_1,a_2)\in H$ satisfies
  \begin{align*}
    \int_0^\pi d a_1'\phi_1'\,dx &= \int_0^\pi \big(-a_2-\zeta a_1 + (a_1^2+a_2^2)a_1 \big)\phi_1 \,dx
    &&\hspace{-1cm}\mbox{ for all } \phi_1 \in H,\\
    \int_0^\pi d a_2'\phi_2'\,dx &= \int_0^\pi \big(a_1-\zeta a_2 +(a_1^2+a_2^2)a_2-f \big)\phi_2 \,dx
    &&\hspace{-1cm}\mbox{ for all }\phi_2 \in H.
  \end{align*}
  A synchronized solution can be extended evenly around $x=\pi$ and thus produce a $2\pi$-periodic function.
  This weak setting in the Hilbert space $H$ will be convenient for the proof of the bifurcation results.

  \medskip

  Next we describe the trivial (i.e. spatially constant) solutions of \eqref{Gl freqcomb}. In order to obtain
  a global parameterization of the solution curves some new auxiliary parameters $t$ resp. $s$ will be used
  instead of $\zeta$ resp. $f$. The totality of constant solutions is given next.
  
  \medskip

  \begin{lemma} \label{trivial}
  Let $d\neq 0$ be fixed.
  \begin{itemize}
  \item[(a)] Let $f\in \R$ be given. Then the set of constant solutions $(a_1,a_2,\zeta)$ of \eqref{Gl
  freqcomb} is given by $\hat\Gamma_f = \{ (\hat a_1(t),\hat a_2(t),\hat\zeta(t)) :|t|<1\}$ where
  $$
    \hat a_1(t) = f(1-t^2),\qquad
    \hat a_2(t) = -ft\sqrt{1-t^2},\qquad
    \hat \zeta(t) = f^2(1-t^2)+\frac{t}{\sqrt{1-t^2}}.
  $$
  \item[(b)] Let $\zeta\in \R$ be given. Then the set of constant solutions $(a_1,a_2,f)$ of \eqref{Gl
  freqcomb} is given by $\bar\Gamma_\zeta= \{(\bar a_1(s),\bar a_2(s),\bar f(s)): s\in\R\}$ where
  $$
    \bar a_1(s) = \frac{s}{\sqrt{1+(s^2-\zeta)^2}},\;\;
    \bar a_2(s) = \frac{s(s^2-\zeta)}{\sqrt{1+(s^2-\zeta)^2}},\;\;
    \bar f(s) = s\sqrt{1+(s^2-\zeta)^2}.
  $$
  \end{itemize}
  \end{lemma}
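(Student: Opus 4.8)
The plan is to exploit that spatially constant solutions have vanishing second derivatives, so that \eqref{Gl freqcomb} reduces to the purely algebraic system obtained by deleting the terms $-da_1''$ and $-da_2''$. Passing to the complex unknown $a=a_1+\ri a_2\in\C$, this system is most cleanly written as the single equation
\begin{equation*}
  \bigl(|a|^2-\zeta+\ri\bigr)\,a=\ri f .
\end{equation*}
Since the factor on the left never vanishes (its imaginary part equals $1$), I would introduce the real parameter $\mu:=|a|^2-\zeta$ and solve explicitly:
\begin{equation*}
  a=\frac{\ri f}{\mu+\ri}=\frac{f}{1+\mu^2}\,(1+\ri\mu),\qquad\text{hence}\qquad
  a_1=\frac{f}{1+\mu^2},\quad a_2=\frac{f\mu}{1+\mu^2}.
\end{equation*}
Taking absolute values gives the scalar consistency relation $|a|^2=f^2/(1+\mu^2)$, i.e. $|a|^2\bigl(1+(|a|^2-\zeta)^2\bigr)=f^2$. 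Conversely, for every prescribed $\mu\in\R$ these formulas produce an $a$, and choosing $\zeta:=|a|^2-\mu=f^2/(1+\mu^2)-\mu$ one verifies at once that $(a_1,a_2,\zeta)$ solves the algebraic system. Thus the set of constant solutions is exactly swept out by letting $\mu$ run through $\R$.

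For part (a) I fix $f$ and keep $\mu$ as the free parameter. The key is the substitution $\mu=-t/\sqrt{1-t^2}$, which is a bijection of $(-1,1)$ onto $\R$ and yields the identities $1/(1+\mu^2)=1-t^2$ and $\mu/(1+\mu^2)=-t\sqrt{1-t^2}$. Inserting these into the expressions for $a_1,a_2$ and into $\zeta=f^2/(1+\mu^2)-\mu$ reproduces exactly $\hat a_1(t),\hat a_2(t),\hat\zeta(t)$; since $t\mapsto\mu$ is onto $\R$, the curve $\hat\Gamma_f$ exhausts all constant solutions for the given $f$ (including the degenerate case $f=0$, where $a\equiv0$ and $\hat\zeta(t)=t/\sqrt{1-t^2}$ recovers every value of $\zeta$).

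For part (b) I fix $\zeta$ and switch to the signed amplitude $s$, characterized by $s^2=|a|^2$ and $\sign(s)=\sign(f)$, so that $s=f/\sqrt{1+\mu^2}$ and $\mu=|a|^2-\zeta=s^2-\zeta$. Substituting $\mu=s^2-\zeta$ into the formulas for $a_1,a_2$ and using $f=(1+\mu^2)a_1=s\sqrt{1+\mu^2}$ then reproduces $\bar a_1(s),\bar a_2(s),\bar f(s)$, and exhaustiveness follows because each constant solution determines a unique $s\in\R$ and conversely. I expect no genuinely hard step here: the lemma is the explicit solution of a two-variable polynomial system written in convenient coordinates, so the only real work is the bookkeeping that the two trigonometric/amplitude substitutions are bijections onto the intended ranges and that they cover the degenerate solutions correctly.
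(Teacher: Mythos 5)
Your proposal is correct and follows essentially the same route as the paper: you derive the consistency relation $f^2=|a|^2\bigl(1+(|a|^2-\zeta)^2\bigr)$, express $a_1,a_2$ in terms of $\mu=|a|^2-\zeta$, and then use exactly the paper's substitutions $\mu=-t/\sqrt{1-t^2}$ for (a) and $s=\sign(f)|a|$ for (b). The only cosmetic difference is that you solve the algebraic system by dividing by the complex number $\mu+\ri$, whereas the paper inverts the corresponding real $2\times2$ matrix; these are the same computation.
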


  \begin{proof}
  Let us first show that constant solutions $a=(a_1,a_2)$ of \eqref{Gl freqcomb} satisfy
  \begin{equation}  \label{das_simple}
    f^2 = |a|^2(1+(|a|^2-\zeta)^2).
  \end{equation}
  Indeed, for constant solutions \eqref{Gl freqcomb} can be written as
  $$
    \begin{pmatrix} |a|^2-\zeta & -1 \\ 1 & |a|^2-\zeta \end{pmatrix} \begin{pmatrix} a_1 \\ a_2\end{pmatrix}
     = \begin{pmatrix} 0 \\ f \end{pmatrix}
  $$
  and hence by inverting the matrix
  \begin{equation}  \label{das_simpleII}
    \begin{pmatrix} a_1 \\ a_2\end{pmatrix} = \frac{f}{1+(|a|^2-\zeta)^2}\begin{pmatrix} 1 \\ |a|^2-\zeta
  \end{pmatrix}.
  \end{equation}
  Taking the Euclidean norm on both sides of the equation gives \eqref{das_simple}.
  \medskip

  Now let us prove (a), so let $f\in\R$ be given and define $t\in (-1,1)$ via $t(1-t^2)^{-1/2} =
  \zeta-|a|^2$. Then \eqref{das_simple} implies
  $$
    \zeta-t(1-t^2)^{-1/2} = |a|^2=\frac{f^2}{1+t^2(1-t^2)^{-1}} = f^2(1-t^2)
  $$
   and hence
  $$
    \zeta= f^2(1-t^2)+\frac{t}{\sqrt{1-t^2}}.
  $$
  From the linear system \eqref{das_simpleII} and the definition of $t$ we obtain the desired
  formulas for $a_1, a_2$. In order to prove (b) let $\zeta\in\R$ and set $s:=\sign(f)|a|$. Then we have
  $$
    f^2=s^2(1+(s^2-\zeta)^2),\quad\text{hence }
    f=s\sqrt{1+(s^2-\zeta)^2}.
  $$
  From the linear system \eqref{das_simpleII} and this formula for $s$ we obtain the result.
  \end{proof}

  \section{Proof of Theorem~\ref{Thm 1 A priori estimates} and Theorem~\ref{Thm 2 nonexistence}} \label{proof_of_theorem1and2}

  We always assume $d\not =0$ and $f,\zeta\in\R$. We write $\|\cdot\|_p$ for the standard norm on
  $L^p([0,2\pi];\R^2)$ for $p\in [1,\infty]$.

  \medskip

  \noindent
  {\em Proof of Theorem~\ref{Thm 1 A priori estimates}:} We divide the proof into several steps.

  \medskip

  \noindent
  {\em Step 1:} Here we prove the $L^2$-estimate $\|a\|_2 \leq \sqrt{2\pi}|f|$. To this end we define the
  $2\pi$-periodic function $g:[0,2\pi]\to\R$ by
  $$
    g := d (a_2 a_1 ' - a_1 a_2')'.
  $$
  By using \eqref{Gl freqcomb} one finds
  \begin{equation}\label{Gl g}
    g = d a_2 a_1'' - d a_1 a_2'' = a_2(a_2+\zeta a_1-|a|^2a_1) + a_1(a_1-\zeta a_2+|a|^2a_2-f) = |a|^2 - f
    a_1.
  \end{equation}
  Since $a_2 a_1'- a_1 a_2'$ is $2\pi$-periodic we obtain
  \begin{align*}
    0
    = \int_0^{2\pi}  g\,dx
    = \int_0^{2\pi} \big(\, |a|^2 - fa_1\,\big)\,dx
    \geq  \|a\|_2^2 - \sqrt{2\pi}|f| \|a\|_2
  \end{align*}
  which implies the desired $L^2$-bound
 \begin{equation}
 \|a\|_2 \leq \sqrt{2\pi}|f|.
 \label{l2_bound}
 \end{equation}

\medskip

\noindent{\em Step 2:} Next we prove $|d|\|a'\|_2
    \leq 6\pi f^2 \|a\|_2$ and thus $|d|\|a'\|_2 \leq 6\sqrt{2}\pi^{3/2}|f|^3$ due to \eqref{l2_bound}. Using
    the differential equation \eqref{Gl freqcomb} we get
  \begin{align}
    |d| \|a'\|_2^2
    &= |d| \int_0^{2\pi} \, a_1' \big(-d a_2''+\zeta a_2-|a|^2a_2+f\big)'\,dx
      + a_2' \big(d a_1''-\zeta a_1+|a|^2a_1\big)'\,dx  \nonumber\\
    &= |d|d \int_0^{2\pi} \big(\, a_2' a_1''' - a_1' a_2'''\,\big)\,dx
      + |d|\int_0^{2\pi}  a_1' (-|a|^2a_2)' + a_2' (|a|^2a_1)'\,dx    \nonumber \\
    &=  |d|d \int_0^{2\pi} (a_2' a_1'' - a_1' a_2'')'\,dx
      + |d|\int_0^{2\pi} (-a_2 a_1' +a_1 a_2' ) (|a|^2)'\,dx    \nonumber  \\
    &=  0+|d|\int_0^{2\pi} (a_2 a_1' -a_1 a_2' )'|a|^2\,dx     \label{previous_estimate} \\
    &\leq  \int_0^{2\pi} |g| |a|^2\,dx     \nonumber\\
    &\leq \|g\|_\infty \|a\|_2^2  \nonumber \\
    &\stackrel{\eqref{l2_bound}}{\leq} \sqrt{2\pi} |f|\|g\|_\infty \|a\|_2. \nonumber
  \end{align}
  Note that $g$ is the derivative of a $2\pi$-periodic function and therefore satisfies $\int_0^{2\pi} g\,dx=0$. Hence
  there exists $x_0\in [0,2\pi]$ such that $g(x_0)=0$ and the supremum norm of $g$ can be estimated as follows
  \begin{align*}
    \|g\|_\infty
    &\leq \sup_{x\in [0,2\pi]} |g(x)-g(x_0)| \\
    &\leq \int_0^{2\pi} |g'|\,dx \\
    &\stackrel{\eqref{Gl g}}{\leq} \int_0^{2\pi} 2|a'||a| + |f||a_1'|\,dx  \\
    &\leq(2\|a\|_2+\sqrt{2\pi}|f|) \|a'\|_2  \\
    &\stackrel{\eqref{l2_bound}}{\leq}  3\sqrt{2\pi}|f| \|a'\|_2.
  \end{align*}
  By our previous estimates \eqref{previous_estimate} and \eqref{l2_bound} this gives
  \begin{equation} \label{Gl estimate H1}
    |d|\|a'\|_2
    \leq \sqrt{2\pi}|f|\|a\|_2 \cdot 3\sqrt{2\pi}|f|
    = 6\pi f^2 \|a\|_2
    \leq 6\sqrt{2}\pi^{3/2}|f|^3
  \end{equation}
  which finishes step 2.

  \medskip

  \noindent
  {\em Step 3:} Now we show the first of two $L^\infty$-bounds: $\|a\|_\infty \leq
  |f|(1+12\pi^2f^2|d|^{-1})$. From the $L^2-$estimate \eqref{l2_bound} we infer that there is an $x_1\in I$ satisfying $|a(x_1)|\leq |f|$.
  Hence our first $L^\infty$-estimate follows from
  \begin{align}
    \|a\|_\infty
    &\leq |a(x_1)| + \|a-a(x_1)\|_\infty \nonumber \\
    &\leq |f| + \|a'\|_1 \nonumber \\
    &\leq |f| + \sqrt{2\pi}\|a'\|_2 \label{first_l_infinity}\\
    &\stackrel{\eqref{Gl estimate H1}}{\leq} |f| + \sqrt{2\pi}\cdot 6\sqrt{2}\pi^{3/2}|f|^3|d|^{-1} \nonumber
    \\
    &= |f|(1+12\pi^2f^2|d|^{-1}).\nonumber
  \end{align}

  \medskip

  \noindent
  {\em Step 4:} Next we show
  \begin{equation} \label{Gl ineq Step4}
    \|a\|_2 \leq (-\zeta\sign(d)-\gamma(d,f))^{-1} \sqrt{2\pi}|f|.
  \end{equation}
  whenever $-\zeta \sign(d)-\gamma(d,f)>0$ for $\gamma(d,f)$ from \eqref{Gl defn gamma(d,f)}.
  Testing \eqref{Gl freqcomb} with $(a_1,a_2)$ and adding up the resulting equations yields
  \begin{equation} \label{Gl identity}
    d\|a'\|_2^2 = - \zeta \|a\|_2^2 + \|a\|_4^4 - f\int_0^{2\pi} a_2 \,dx.
  \end{equation}
  This can be used in the following way.
  \begin{align}
    36\pi^2f^4|d|^{-1}\|a\|_2^2
    &= |d|^{-1} (6\pi f^2\|a\|_2)^2  \nonumber \\
    &\stackrel{\eqref{Gl estimate H1}}{\geq} |d|^{-1} (|d|\|a'\|_2)^2 \label{second_l_infinity} \\
    &= |d|\|a'\|_2^2 \nonumber \\
    &\stackrel{\eqref{Gl identity}}{=} - \zeta\sign(d) \|a\|_2^2 + \sign(d)\|a\|_4^4 - \sign(d)f\int_0^{2\pi} a_2\,dx. \nonumber
  \end{align}
  In order to prove \eqref{Gl ineq Step4} we first suppose $d>0$. Then \eqref{second_l_infinity} implies
  $$
  36\pi^2f^4|d|^{-1}\|a\|_2^2 \geq -\zeta \|a\|_2^2- \sqrt{2\pi}|f|\|a\|_2
  $$
  from which we infer the desired bound
  $$
  \sqrt{2\pi}|f| \geq \left(-\zeta-36\pi^2f^4|d|^{-1}\right) \|a\|_2=(-\zeta- \gamma(d,f))\|a\|_2.
  $$
 Supposing now $d<0$ we find that \eqref{second_l_infinity} implies
 \begin{align*}
 36\pi^2f^4|d|^{-1}\|a\|_2^2 & \geq (\zeta-\|a\|_\infty^2)  \|a\|_2^2-\sqrt{2\pi}|f|\|a\|_2 \\
 & \stackrel{\eqref{first_l_infinity}}{\geq} (\zeta-f^2(1+12\pi^2f^2|d|^{-1})^2)\|a\|_2^2 - \sqrt{2\pi}|f|\|a\|_2
 \end{align*}
 from which we obtain
  $$
    \sqrt{2\pi}|f|
    \geq \left(\zeta-36\pi^2f^4|d|^{-1}-f^2(1+12\pi^2f^2|d|^{-1})^2\right \|a\|_2 = (\zeta-\gamma(d,f))\|a\|_2.
  $$
  so that \eqref{Gl ineq Step4} is proved.

\medskip

\noindent
{\em Step 5:} Finally, we show
$$
  \|a\|_\infty \leq  (-\zeta\sign(d)-\gamma(d,f))^{-1}|f|(1+12\pi^2f^2|d|^{-1}).
$$
$-\zeta \sign(d)-\gamma(d,f)>0$. This completes the proof of Theorem~\ref{Thm 1 A priori estimates}. The
$L^2$-estimate from Step~4 entails, just as in Step~3, that there exists some $x_1\in [0,2\pi]$ such that
$|a(x_1)|\leq (-\zeta \sign(d)-\gamma(d,f))^{-1}|f|$ and the claim follows from
  \begin{align*}
    \|a\|_\infty & \leq |a(x_1)|+\sqrt{2\pi}\|a'\|_2 \\
    &\stackrel{\eqref{Gl estimate H1}}{\leq }|a(x_1)|+ 6\pi\sqrt{2\pi}f^2|d|^{-1}\|a\|_2 \\
    &\leq  \frac{|f|(1+12\pi^2f^2|d|^{-1})}{-\zeta\sign(d)-\gamma(d,f)} \qquad \mbox{ by Step 4}.
  \end{align*}

%

 \medskip

  Now we come to the proof of the uniqueness result from Theorem~\ref{Thm 2 nonexistence}. Let us first
  outline our strategy to prove the result. We use the fact that a solution
  $a=(a_1,a_2):[0,2\pi]\to\R\times\R$ of \eqref{Gl freqcomb} is constant if and only if the function
  $A=(A_1,A_2):=(a_1',a_2')$ is trivial. Since $(a_1,a_2)$ solves \eqref{Gl freqcomb} the functions $A_1,
  A_2$ satisfy the boundary value problem
  \begin{align} \label{eq_A}
    \begin{aligned}
    -d A_1'' &= -A_2 - \zeta A_1+(3a_1^2+a_2^2)A_1+2a_1a_2A_2, \\
    -d A_2'' &= A_1 - \zeta A_2 +2a_1a_2A_1+ (a_1^2+3a_2^2)A_2, \\
    & A_1, A_2 \;\; 2\pi\mbox{-periodic.}
    \end{aligned}
  \end{align}
  In view of \eqref{eq_A} it is natural to study the operator
  $$
  L_{d,\zeta}: \left\{ \begin{array}{rcl}
  H^2_{\rm per}([0,2\pi];\R^2) & \to & L^2([0,2\pi];\R^2), \vspace{\jot}\\
    (B_1,B_2) & \mapsto & (-d B_1''+\zeta B_1+B_2, -dB_2''+\zeta B_2-B_1).
  \end{array}
  \right.
  $$
  Using the fact that the embedding $\Id: H^2_{\rm per}([0,2\pi];\R^2)\to L^2([0,2\pi];\R^2)$ is compact we
  obtain the following result.
  
  \medskip

  \begin{lemma} \label{lem operatornorm estimate}
    The operator $L_{d,\zeta}$ has a bounded inverse $L_{d,\zeta}^{-1}:
    L^2([0,2\pi];\R^2) \to H^2_{\rm per}([0,2\pi];\R^2)$ with the property that $\Id
    \circ L_{d,\zeta}^{-1}: L^2([0,2\pi];\R^2)\to L^2([0,2\pi];\R^2)$ is compact and $\|\Id \circ L_{d,\zeta}^{-1}\| \leq
    \min\{1, (\sign(d)\zeta)_+^{-1}\}$.
  \end{lemma}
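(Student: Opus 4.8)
The plan is to diagonalise $L_{d,\zeta}$ by complexifying the unknown and passing to Fourier series. I identify a pair $(B_1,B_2)$ with the complex function $B=B_1+\ri B_2$; since $\int_0^{2\pi}(B_1^2+B_2^2)\,dx=\int_0^{2\pi}|B|^2\,dx$, this is an isometric real-linear isomorphism of $L^2([0,2\pi];\R^2)$ onto $L^2([0,2\pi];\C)$ carrying $H^2_{\rm per}([0,2\pi];\R^2)$ onto $H^2_{\rm per}([0,2\pi];\C)$. A direct computation using $B_2-\ri B_1=-\ri B$ shows that under this identification $L_{d,\zeta}$ becomes the complex-linear operator $B\mapsto -dB''+(\zeta-\ri)B$. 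Expanding $B=\sum_{k\in\Z}\hat B_k e^{\ri kx}$, this operator acts as multiplication of the $k$-th Fourier coefficient by $m_k:=dk^2+\zeta-\ri$, so the whole problem reduces to understanding the scalar multipliers $m_k$.

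Since each $m_k$ has imaginary part $-1$, we have $m_k\neq 0$ and $|m_k|=\sqrt{(dk^2+\zeta)^2+1}\geq 1$. Hence the natural candidate for the inverse is the Fourier multiplier with symbol $m_k^{-1}$. Because $\sup_{k\in\Z}(1+k^2)|m_k|^{-1}<\infty$ (the ratio is finite for each $k$ and tends to $|d|^{-1}$ as $|k|\to\infty$), this multiplier maps $L^2$ boundedly into $H^2_{\rm per}$, and the implication $m_k\hat B_k=0\Rightarrow\hat B_k=0$ shows it is a genuine two-sided inverse of $L_{d,\zeta}$. This establishes the bounded inverse $L_{d,\zeta}^{-1}\colon L^2([0,2\pi];\R^2)\to H^2_{\rm per}([0,2\pi];\R^2)$.

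Compactness of $\Id\circ L_{d,\zeta}^{-1}$ now follows at once by factoring it through the compact Rellich embedding $\Id\colon H^2_{\rm per}([0,2\pi];\R^2)\to L^2([0,2\pi];\R^2)$: a bounded operator followed by a compact one is compact. For the norm, observe that after complexification $\Id\circ L_{d,\zeta}^{-1}$ is exactly the Fourier multiplier with symbol $m_k^{-1}$; since this map is complex-linear, its real and complex operator norms coincide, and both equal $\sup_{k\in\Z}|m_k|^{-1}=(\inf_{k\in\Z}|m_k|)^{-1}$.

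It therefore remains to minimise $|m_k|=\sqrt{(dk^2+\zeta)^2+1}$, equivalently $|dk^2+\zeta|$, over $k\in\Z$. The bound $|m_k|\geq 1$ already gives $\|\Id\circ L_{d,\zeta}^{-1}\|\leq 1$. If $\sign(d)\zeta>0$, then $dk^2$ and $\zeta$ have the same sign, so $|dk^2+\zeta|$ is smallest at $k=0$, where it equals $|\zeta|=\sign(d)\zeta$; consequently $\inf_k|m_k|=\sqrt{(\sign(d)\zeta)^2+1}$ and $\|\Id\circ L_{d,\zeta}^{-1}\|=((\sign(d)\zeta)^2+1)^{-1/2}\leq(\sign(d)\zeta)^{-1}=(\sign(d)\zeta)_+^{-1}$. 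Combining the two cases yields $\|\Id\circ L_{d,\zeta}^{-1}\|\leq\min\{1,(\sign(d)\zeta)_+^{-1}\}$. The only genuinely delicate step is this final case distinction for the location of the infimum; the complexification is what collapses the matrix symbol to a single scalar and makes both the invertibility and the minimisation transparent, so I expect no real obstacle beyond careful bookkeeping.
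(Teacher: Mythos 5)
Your proof is correct, and it takes a genuinely different route from the paper. You diagonalise $L_{d,\zeta}$ by complexifying to $B\mapsto -dB''+(\zeta-\ri)B$ and reading off the Fourier multipliers $m_k=dk^2+\zeta-\ri$, which reduces everything to the scalar estimate $\inf_k|m_k|=\sqrt{(\inf_k|dk^2+\zeta|)^2+1}$; your case analysis of where that infimum sits is right, and as a bonus you obtain the \emph{exact} operator norm $\bigl(\inf_k|m_k|\bigr)^{-1}$, which is strictly sharper than the stated bound $\min\{1,(\sign(d)\zeta)_+^{-1}\}$. The paper instead never diagonalises: it tests the two equations with $(B_1,B_2)$ and with the rotated pair $(B_2,-B_1)$, obtaining two energy identities whose combination yields $\max\{1,\sign(d)\zeta\}\,\|(B_1,B_2)\|_2^2\leq\|(g_1,g_2)\|_2\,\|(B_1,B_2)\|_2$ and then bootstraps to $H^2$ bounds via the equation; this is softer (no spectral decomposition, and it would survive perturbations that destroy the constant-coefficient structure) but only delivers the inequality, not the exact norm. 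Your observation that the $(B_2,-B_1)$ test is secretly multiplication by $-\ri$ after complexification is precisely what collapses the paper's two identities into the single statement $|m_k|\geq 1$. The only points worth tightening in your write-up are cosmetic: the identification of $H^2_{\rm per}$ with the weighted $\ell^2$ space of Fourier coefficients should be cited as the standard characterisation, and the remark that real and complex operator norms of a complex-linear map coincide is correct but deserves the one-line reason (the supremum defining the norm ranges over the same set of vectors in either case).
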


  \begin{proof}
    Let $(B_1,B_2)\in H^2_{\rm per}([0,2\pi];\R^2)$ and $(g_1,g_2)\in L^2([0,2\pi];\R^2)$ satisfy the equation
    $L_{d,\zeta}(B_1,B_2) = (g_1,g_2)$, i.e., $-d B_1'' +\zeta B_2 + B_2 = g_1$ and $-d B_2'' +\zeta B_2 - B_1
    = g_2$.
    Testing these differential equations with $(B_1,B_2)$ respectively $(B_2,-B_1)$ and adding up the
    resulting equations yields
    \begin{align}
      d(\|B_1'\|_2^2+\|B_2'\|_2^2) + \zeta (\|B_1\|_2^2+\|B_2\|_2^2)
      &=  \int_0^{2\pi} \big(\,g_1B_1+g_2B_2\,\big)\,dx, \label{first}\\
      \|B_1\|_2^2+\|B_2\|_2^2
      &=  \int_0^{2\pi}\big(\,g_1B_2-g_2B_1\,\big)\,dx. \label{second}
    \end{align}
    The second line \eqref{second} shows that $L_{d,\zeta}$ is injective. Moreover, using $
    \|B_1'\|_2^2+\|B_2'\|_2^2\geq 0$ as well as H\"older's inequality we obtain from \eqref{first} and
    \eqref{second}
    $$
      \max\{1,\sign(d)\zeta\} \|(B_1,B_2)\|_2^2 \leq \|(g_1,g_2)\|_2 \|(B_1,B_2)\|_2
    $$
    and thus
     $$
       \|(B_1,B_2)\|_2 \leq \min\{1,(\sign(d)\zeta)_+^{-1}\} \|(g_1,g_2)\|_2.
     $$
    From this and \eqref{first} we get the estimate
    $$
      |d|\|(B_1', B_2')\|_2^2  \leq (1+|\zeta|) \|(g_1,g_2)\|_2^2
    $$
    and using the differential equation and the $L^2$-estimate on $(B_1,B_2)$ we find
    $$
      |d|\|(B_1'',B_2'')\|_2 \leq  (2+|\zeta|)\|(g_1,g_2)\|_2.
    $$
    This proves the bounded invertibility of $L_{d,\zeta}$ as well as the norm estimate for $\Id\circ
    L_{d,\zeta}^{-1}$.
 \end{proof}

 \medskip
 
  \noindent
  {\em Proof of Theorem~\ref{Thm 2 nonexistence}:} If $(A_1,A_2)$ satisfies \eqref{eq_A} then we have
  $(A_1,A_2)=K_a (A_1,A_2)$ where $K_a:L^2([0,2\pi];\R^2)\to L^2([0,2\pi];\R^2)$ is given by
  \begin{equation} \label{Gl Def Ka}
    K_a \vecII{A_1}{A_2} := \Id\circ L_{d,\zeta}^{-1}\Big( M_a \vecII{A_1}{A_2} \Big) \mbox{ with }
     M_a= \matII{3a_1^2+a_2^2}{2a_1a_2}{2a_1a_2}{a_1^2+3a_2^2}.
  \end{equation}
  Hence, it suffices to prove that the operator norm $\|K_a\|$ is less than 1 whenever
  $\sign(d)\zeta>\zeta^*$ or $\sign(d)\zeta<\zeta_*$. Consider the matrix $M_a$ as a map from $L^2([0,2\pi];\R^2)$
  into itself. Then its operator norm is bounded as follows
  $$
      \|M_a\|
      \leq \Big\|\matII{4\|a\|_\infty^2}{2\|a\|_\infty^2}{2\|a\|_\infty^2}{4\|a\|_\infty^2}\Big\|
      =  \|a\|_\infty^2\cdot\Big\|\matII{4}{2}{2}{4}\Big\|
      =  6\|a\|_\infty^2,
  $$
  since the largest eigenvalue of $\matII{4}{2}{2}{4}$ is $6$. Combining this inequality with
  the estimate from Lemma~\ref{lem operatornorm estimate} we get
  $$
    \|K_a\| \leq 6\min\{1,(\sign(d)\zeta)_+^{-1}\} \|a\|_\infty^2.
  $$
  In the first case, where $\sign(d)\zeta>\zeta^*>0$, Theorem~\ref{Thm 1 A priori estimates} gives, by choice
  of $\zeta^*$,
  $$
    \|K_a\|
    \leq  6(\sign(d)\zeta)^{-1}\|a\|_\infty^2
    <  6(\zeta^*)^{-1} f^2(1+12\pi^2f^2|d|^{-1})^2=1.
  $$
  In the second case, where $\sign(d)\zeta<\zeta_*<0$ and in particular $-\zeta\sign(d)-\gamma(d,f)>0$, we
  get from Theorem~\ref{Thm 1 A priori estimates} and again by the choice of $\zeta_*$
  $$
    \|K_a\|
    \leq 6\|a\|_\infty^2
    \leq \left( \frac{\sqrt{6}|f|(1+12\pi^2f^4|d|^{-1})}{-\zeta\sign(d)-\gamma(d,f)}\right)^2
    < \left( \frac{\sqrt{6}|f|(1+12\pi^2f^4|d|^{-1})}{-\zeta_*-\gamma(d,f)}\right)^2 =1
  $$
   which is all we had to show. \qed

%
%
%
  \section{Proof of Theorem \ref{Thm 3 bifurcation zeta} and Theorem \ref{Thm 4 bifurcation f}} \label{proof_of_theorem3and4}

  In this section we prove the bifurcation results for the Lugiato-Lefever equation~\eqref{Gl freqcomb}. We
  will always assume that $d\not = 0$ is fixed. As explained earlier our sufficient conditions for
  bifurcation from constant solutions of~\eqref{Gl freqcomb} will be established in the context of the
  Neumann boundary value problem
  \begin{align}\label{Gl freqcomb_neumann}
  \begin{aligned}
     -d a_1'' &= -a_2-\zeta a_1 + (a_1^2+a_2^2)a_1, &\quad a_1'(0)=a_1'(\pi)=0,\\
     -d a_2''&= a_1-\zeta a_2 +(a_1^2+a_2^2)a_2-f, &\quad a_2'(0)=a_2'(\pi)=0.
  \end{aligned}
  \end{align}
  In this way the shift-invariance of the general $2\pi$-periodic system is circumvented. Using the notation
  introduced in the introduction we will find the existence of synchronized solution branches bifurcating
  from the curves of constant solutions.

  Let us now shortly outline how our bifurcation analysis is organized. In
  Section~\ref{framework} we first provide a functional analytical framework for solutions of \eqref{Gl
  freqcomb_neumann}. The construction of solutions is done with the help of the bifurcation theorem due to
  Crandall and Rabinowitz (Theorem~\ref{Thm Crandall-Rabinowitz}).
  In Theorem~\ref{Thm 3 bifurcation zeta}, the family of trivial solutions will be $\hat\Gamma_f$ for fixed
  $f\in\R$ and in Theorem~\ref{Thm 4 bifurcation f} it will be $\bar \Gamma_\zeta$ for fixed $\zeta\in\R$.
  The proof of these theorems is accomplished in four steps. In Section~\ref{subsec Det bifpoints} we first
  determine the candidates for the bifurcation points of \eqref{Gl freqcomb} with respect to $\hat\Gamma_f,
  \bar\Gamma_\zeta$ proving Theorem~\ref{Thm 3 bifurcation zeta}(i), Theorem~\ref{Thm 4 bifurcation f}(i).
  This result will be used in Section~\ref{subsec number bifpoints} to establish the upper bounds for the
  number of bifurcation points claimed in Theorem~\ref{Thm 3 bifurcation zeta}(ii) and Theorem~\ref{Thm 4
  bifurcation f}(ii). In order to prove the existence of bifurcating branches it remains to check the
  hypotheses of the Crandall-Rabinowitz theorem in the context of the Neumann boundary value problem
  \eqref{Gl freqcomb_neumann}. In Section~\ref{simple_kernels} we show that the kernels at the possible
  bifurcation points (calculated in Section~\ref{subsec Det bifpoints}) are simple if the conditions (S) from
  the respective theorem holds. In the same way the transversality condition will be verified in
  Section~\ref{transversality} supposing that condition (T) holds. Hence, a direct application of
  Theorem~\ref{Thm Crandall-Rabinowitz} establishes the existence of local curves containing nontrivial
  solutions of \eqref{Gl freqcomb_neumann} that emanate from $\hat\Gamma_f,\bar\Gamma_\zeta$ respectively. 
  Considering \eqref{Gl freqcomb_neumann} as an equation in the space of
   $2\pi/k$-periodic functions one even finds that the uniquely determined bifurcating branch
   from bifurcation points associated to $k$ via \eqref{Gl bifpoints Thm3} resp. \eqref{Gl
   bifpoints Thm4} locally consists of $2\pi/k$-periodic solutions. Notice that this is possible due to the
   fact that the functions in the kernel of the linearized operator are $2\pi/k$-periodic, see
   \eqref{Gl kernel Ga} in Proposition \ref{Prop kernels}. Moreover, the a priori bounds for $a$ from Theorem~\ref{Thm 1 A priori estimates} and the uniqueness
  result from Theorem~\ref{Thm 2 nonexistence} tell us that for any given $f\in\R$ the continua
  emanating from $\hat\Gamma_f$ must be bounded with respect to both variables $a,\zeta$ so that Rabinowitz'
  global bifurcation theorem \cite{Rab_some_global} yields that each continuum returns to $\hat\Gamma_f$
  at another point. 
  Hence, part (iii) of Theorem~\ref{Thm 3 bifurcation zeta} and Theorem~\ref{Thm 4
  bifurcation f} is shown and the proof is complete.

  \medskip

  \subsection{Functional analytical framework and preliminaries} ~ \label{framework}

  \medskip

  We look for solutions of \eqref{Gl freqcomb_neumann} in the function space $H:=H^1([0,\pi];\R^2)$. It is a Hilbert space with the inner product
  $\langle\cdot,\cdot\rangle_H$ given by
  \begin{equation*}
    \langle \phi,\psi \rangle_H := \int_0^\pi
    |d| (\phi_1'\psi_1'+ \phi_2'\psi_2')+\phi_1 \psi_1+\phi_2\psi_2\,dx
    \qquad\text{for } \phi=\begin{pmatrix} \phi_1 \\ \phi_2\end{pmatrix}, \psi=\begin{pmatrix} \psi_1 \\ \psi_2\end{pmatrix}\in H.
  \end{equation*}
  If $D:\dom(D)\to\R$ denotes the selfadjoint realization of
  the differential operator $\phi \mapsto -|d|\phi'' + \phi$ with homogeneous
  Neumann boundary values at $0, \pi$ then $\langle \phi,\psi\rangle_H = \langle D\phi,\psi\rangle_{L^2}$ for all $\phi\in
  \dom(D)$. Even though it will not be used in the sequel let us state
  without proof
  \begin{equation*}
    \dom(D)= \Big\{ \phi \in H^2([0,\pi];\R^2) :
    \phi'(0)=\phi'(\pi)= \begin{pmatrix} 0 \\ 0 \end{pmatrix}  \Big\}.
  \end{equation*}
  The operator $D$ has a compact inverse $D^{-1}: H\to \dom(D)\subset H$ so that \eqref{Gl freqcomb_neumann}
  may be rewritten as $G(a,\zeta,f)=0$ where the function $G:H\times \R\times\R \to H$ is given by
  \begin{equation}
	G(a,\zeta,f) :=  \sign(d)a-D^{-1}\left(-\zeta a +\sign(d) a+ |a|^2a + \begin{pmatrix} -a_2 \\ a_1
	\end{pmatrix} + \begin{pmatrix} 0 \\ -f \end{pmatrix} \right).
	\label{crab}
  \end{equation}
  In order to prove bifurcation results from the family of constant solutions of \eqref{Gl freqcomb_neumann} let us recall
  the Crandall-Rabinowitz bifurcation theorem.
  
  \medskip

  \begin{theorem}[Crandall-Rabinowitz \cite{CrRab_bifurcation}] \label{Thm Crandall-Rabinowitz}
    Let $I\subset\R$ be an open interval and let $F:H\times I\to H$ be twice continuously differentiable such
    that $F(0,\lambda)=0$ for all $\lambda\in I$ and such that $F_x(0,\lambda_0)$ is an index-zero Fredholm
    operator for $\lambda_0\in I$. Moreover assume:
    \begin{itemize}
      \item[(H1)] there is $\phi\in H,\phi\neq 0$ such that $\ker(F_x(0,\lambda_0))=\spann\{\phi\}$,
      \item[(H2)] $\langle F_{x\lambda}(0,\lambda_0)[\phi], \phi^*\rangle_H \neq 0$ where
    $\ker(F_x(0,\lambda_0)^*)=\spann\{\phi^*\}$.
    \end{itemize}
    Then there exists $\epsilon>0$ and a continuously differentiable curve $(x,\lambda): (-\epsilon,\epsilon)\to H\times \R$ with $\lambda(0)=\lambda_0$, $x(0)=0$, $x'(0)=\phi$ and $x(t)\neq 0$ for $0<|t|<\epsilon$ and $F(x(t),\lambda(t))=0$ for all $t\in (-\epsilon,\epsilon)$. Moreover, there exists a neighbourhood $U\times J\subset H\times I$ of $(0,\lambda_0)$ such that all nontrivial solutions in $U\times J$ of $F(x,\lambda)=0$ lie on the curve.
  \end{theorem}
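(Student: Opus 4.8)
The plan is to prove this classical statement by a Lyapunov--Schmidt reduction combined with the implicit function theorem, which reduces the infinite-dimensional equation to a single scalar equation in the two variables $(s,\lambda)$. Write $L:=F_x(0,\lambda_0)$. Since $L$ is an index-zero Fredholm operator with $\kernel(L)=\spann\{\phi\}$ by (H1), its range is closed of codimension one, and $\range(L)^\perp=\kernel(L^*)=\spann\{\phi^*\}$ with respect to $\langle\cdot,\cdot\rangle_H$. Working in the Hilbert space $H$ I would use the orthogonal splittings $H=\spann\{\phi\}\oplus W$ and $H=\range(L)\oplus\spann\{\phi^*\}$, and let $Q$ be the orthogonal projection of $H$ onto $\range(L)$. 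Writing $x=s\phi+w$ with $s\in\R$ and $w\in W$, the equation $F(x,\lambda)=0$ becomes equivalent to the pair
\begin{align*}
QF(s\phi+w,\lambda)&=0,\\
(I-Q)F(s\phi+w,\lambda)&=0,
\end{align*}
the first being the auxiliary equation, the second the scalar bifurcation equation valued in $\spann\{\phi^*\}$.

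Next I would solve the auxiliary equation. The map $(s,w,\lambda)\mapsto QF(s\phi+w,\lambda)$ vanishes at $(0,0,\lambda_0)$ and its partial derivative in $w$ there is $QL|_W\colon W\to\range(L)$, which is an isomorphism since $L$ restricts to an isomorphism of $W$ onto $\range(L)$ and $Q$ acts as the identity on $\range(L)$. The implicit function theorem then yields a $C^2$ map $w=w(s,\lambda)$ near $(0,\lambda_0)$ solving $QF(s\phi+w(s,\lambda),\lambda)=0$; because $F(0,\lambda)\equiv0$, uniqueness forces $w(0,\lambda)=0$, and differentiating the defining identity in $s$ at $(0,\lambda_0)$ gives $QLw_s(0,\lambda_0)=0$, hence $w_s(0,\lambda_0)=0$ by injectivity of $QL|_W$. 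Substituting back leaves the bifurcation equation $\Phi(s,\lambda):=(I-Q)F(s\phi+w(s,\lambda),\lambda)=0$, equivalently $\langle F(s\phi+w(s,\lambda),\lambda),\phi^*\rangle_H=0$.

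Since $\Phi(0,\lambda)\equiv0$, I would factor $\Phi(s,\lambda)=s\,\Psi(s,\lambda)$ with $\Psi(s,\lambda):=\int_0^1\Phi_s(\tau s,\lambda)\,d\tau$ of class $C^1$, so that nontrivial solutions ($s\neq0$) correspond exactly to $\Psi(s,\lambda)=0$. The point $(0,\lambda_0)$ solves this because $\Psi(0,\lambda_0)=\Phi_s(0,\lambda_0)=(I-Q)L(\phi+w_s(0,\lambda_0))=0$, using $L\phi=0$ and $w_s(0,\lambda_0)=0$. The crux is the transversality needed to solve for $\lambda=\lambda(s)$: a careful differentiation yields
$$\Psi_\lambda(0,\lambda_0)=\Phi_{s\lambda}(0,\lambda_0)=(I-Q)F_{x\lambda}(0,\lambda_0)\phi,$$
where the contribution containing $F_x(0,\lambda_0)\partial_\lambda w_s$ drops out because $(I-Q)L=0$. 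As $(I-Q)$ is the orthogonal projection onto $\spann\{\phi^*\}$, this element is nonzero precisely when $\langle F_{x\lambda}(0,\lambda_0)\phi,\phi^*\rangle_H\neq0$, which is exactly hypothesis (H2).

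The implicit function theorem then produces a $C^1$ curve $s\mapsto(x(s),\lambda(s))$ with $\lambda(0)=\lambda_0$ and $x(s)=s\phi+w(s,\lambda(s))$; from $w(0,\lambda)\equiv0$ and $w_s(0,\lambda_0)=0$ one reads off $x(0)=0$ and $x'(0)=\phi$, while $x(s)\neq0$ for small $s\neq0$ since its $\phi$-component is $s$. The local uniqueness statement follows from the reduction itself: any solution near $(0,\lambda_0)$ has its $W$-component forced to be $w(s,\lambda)$ by the implicit function theorem, and then must satisfy either $s=0$, giving the trivial solution, or $\Psi(s,\lambda)=0$, hence lies on the curve. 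I expect the transversality computation for $\Phi_{s\lambda}(0,\lambda_0)$ — in particular the bookkeeping of which terms are annihilated by $(I-Q)$ and the passage from the projection being nonzero to the scalar pairing condition (H2) — to be the main technical point, while the regularity count (that dividing $\Phi$ by $s$ costs one derivative and thus yields only a $C^1$ curve) is a routine but essential detail.
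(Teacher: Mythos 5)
The paper does not prove this statement: it is quoted verbatim as a classical result of Crandall and Rabinowitz with a citation to \cite{CrRab_bifurcation}, so there is no in-paper argument to compare against. Your proposal is the standard Lyapunov--Schmidt proof of that theorem (essentially the original one), and it is correct in all essentials: the splitting along $\kernel(L)$ and $\range(L)$, the solution of the auxiliary equation with $w(0,\lambda)\equiv 0$ and $w_s(0,\lambda_0)=0$, the factorization $\Phi=s\Psi$ costing one derivative, the identification of $\Psi_\lambda(0,\lambda_0)$ with $(I-Q)F_{x\lambda}(0,\lambda_0)\phi$ and hence with the pairing in (H2), and the local uniqueness via the reduction. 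One bookkeeping item you should make explicit: when computing $\Phi_{s\lambda}(0,\lambda_0)$ from $\Phi_s(s,\lambda)=(I-Q)F_x(s\phi+w(s,\lambda),\lambda)[\phi+w_s(s,\lambda)]$, besides the term $F_x(0,\lambda_0)[w_{s\lambda}(0,\lambda_0)]$ killed by $(I-Q)L=0$, there is also a term $F_{xx}(0,\lambda_0)[w_\lambda(0,\lambda_0),\phi+w_s(0,\lambda_0)]$, which vanishes because $w(0,\lambda)\equiv 0$ forces $w_\lambda(0,\lambda_0)=0$; you established this fact but did not invoke it at the place where it is needed. With that line added, the argument is complete.
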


  \medskip

  In the proof of Theorem~\ref{Thm 3 bifurcation zeta} and Theorem~\ref{Thm 4 bifurcation f} we will apply
  Theorem~\ref{Thm Crandall-Rabinowitz} to the functions $\hat F:H\times (-1,1)\to H$ and $\bar
  F:H\times\R\to H$ given by
  \begin{align} \label{Gl defn hatF, barF}
    \begin{aligned}
    \hat F(b,t) &:=G(b+\hat a(t),\hat\zeta(t),f), \qquad b\in H, t\in (-1,1),\\
    \bar F(b,s) &:=G(b+\bar a(s),\zeta,\bar f(s)), \qquad b\in H, s\in \R
    \end{aligned}
  \end{align}
  where the trivial solution curves $(\hat a(t),\hat\zeta(t))$ respectively $(\bar a(s),\zeta(s))$ are taken from
  Lemma~\ref{trivial}. Checking the assumptions of Theorem~\ref{Thm Crandall-Rabinowitz} requires the
  calculation of the derivatives of $\hat F,\bar F$ at the trivial solutions. The necessary preparations are
  made in the following proposition.
  
  \medskip

  \begin{proposition}  \label{prop derivatives}
    Let $a \in H$, $\zeta,f\in\R$ and set
    \begin{gather*} 
      N(a,\zeta) = \begin{pmatrix}
		-\zeta+3a_1^2+a_2^2 & -1+2a_1a_2 \\
		1+2a_1a_2 & -\zeta+a_1^2+3a_2^2
	\end{pmatrix},\\
	M_1(a) = \begin{pmatrix}
		6a_1& 2a_2 \\
		2a_2 & 2a_1
	\end{pmatrix},\qquad
	M_2(a) = \begin{pmatrix}
		2a_2& 2a_1 \\
		2a_1 & 6a_2
	\end{pmatrix}.
    \end{gather*}
    Then we have for all $\phi,\psi\in H$
    \begin{align*}
      G_a(a,\zeta,f)[\phi]
      &= \sign(d)\phi - D^{-1}\big( N(a,\zeta)\phi+\sign(d)\phi \big), \\
      G_{aa}(a,\zeta,f)[\phi,\psi]
      &= - D^{-1} \vecII{\phi^TM_1(a)\psi}{\phi^TM_2(a)\psi}
    \end{align*}  
    as well as
    \begin{align*}
      \begin{aligned}
        G_\zeta(a,\zeta,f) = D^{-1}a, \qquad
        G_f(a,f) = D^{-1} (0,1)^T, \\ 
        G_{a\zeta}(a,\zeta,f)[\phi] = D^{-1}\phi,\qquad
        G_{af}(a,\zeta,f)[\phi]  = 0.
       \end{aligned}
    \end{align*}
  \end{proposition}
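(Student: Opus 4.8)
The plan is to exploit the structure of $G$ from \eqref{crab}, which has the form
\[
G(a,\zeta,f) = \sign(d)\,a - D^{-1}\bigl(\Phi(a,\zeta,f)\bigr),
\]
where $\Phi(a,\zeta,f) := -\zeta a + \sign(d)a + |a|^2a + \vecII{-a_2}{a_1} + \vecII{0}{-f}$ collects all terms inside the solution operator. Since $D^{-1}\colon H\to H$ is a \emph{fixed} bounded linear operator and $a\mapsto\sign(d)a$ is linear, every Fr\'echet derivative of $G$ is obtained by differentiating $\Phi$ and composing with $D^{-1}$; no derivatives of $D^{-1}$ ever appear. Thus the whole computation reduces to differentiating $\Phi$, which I would carry out term by term.

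First I would settle the only genuine functional-analytic point, namely that $\Phi$, and hence $G$, is twice continuously Fr\'echet differentiable on $H\times\R\times\R$. The affine terms $-\zeta a + \sign(d)a + \vecII{-a_2}{a_1} + \vecII{0}{-f}$ are trivially smooth, so the issue is the cubic superposition operator $a\mapsto|a|^2a$. In one space dimension $H^1([0,\pi])$ embeds continuously into $L^\infty([0,\pi])$ and is therefore a Banach algebra; consequently pointwise products of $H$-functions again lie in $H$, and the polynomial map $a\mapsto|a|^2a=(a_1^2+a_2^2)(a_1,a_2)^T$ is a smooth map from $H$ into $H$ whose derivatives are given pointwise by the Jacobian and Hessian of the corresponding map on $\R^2$. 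This is the step I expect to be the main (if modest) obstacle: once it is in place, all remaining work is finite-dimensional calculus performed pointwise.

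Next I would compute the Jacobian of $\Phi$ in the $a$-direction. The cubic term contributes the matrix $\bigl(\begin{smallmatrix} 3a_1^2+a_2^2 & 2a_1a_2 \\ 2a_1a_2 & a_1^2+3a_2^2\end{smallmatrix}\bigr)$, the rotation term $\vecII{-a_2}{a_1}$ contributes $\bigl(\begin{smallmatrix}0 & -1 \\ 1 & 0\end{smallmatrix}\bigr)$, and $-\zeta a + \sign(d)a$ contributes $(-\zeta+\sign(d))I$; adding these gives exactly $N(a,\zeta)+\sign(d)I$, which yields the claimed formula for $G_a$. Differentiating once more, only the cubic term survives (all other contributions to $\Phi_a$ being constant in $a$), and the Hessian of its first and second components is precisely $M_1(a)$ and $M_2(a)$; writing the associated bilinear forms as $\phi^TM_i(a)\psi$ produces the stated formula for $G_{aa}$.

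Finally I would read off the parameter derivatives by inspection. Since $\zeta$ enters $\Phi$ only through $-\zeta a$ and $f$ only through $\vecII{0}{-f}$, one immediately obtains $G_\zeta=-D^{-1}(-a)=D^{-1}a$ and $G_f=-D^{-1}\vecII{0}{-1}=D^{-1}(0,1)^T$. Differentiating $G_a[\phi]=\sign(d)\phi-D^{-1}\bigl((N(a,\zeta)+\sign(d)I)\phi\bigr)$ in $\zeta$ picks up only $\partial_\zeta N=-I$, giving $G_{a\zeta}[\phi]=D^{-1}\phi$, while $G_a$ is independent of $f$, so that $G_{af}=0$. This completes the verification of all six formulas.
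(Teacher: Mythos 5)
Your computation is correct and is exactly the "mere calculation" that the paper omits: differentiating the cubic superposition term $a\mapsto|a|^2a$ pointwise and composing with the fixed bounded operator $D^{-1}$ yields all six formulas. Your additional remark that $H^1([0,\pi];\R^2)\hookrightarrow L^\infty$ makes the cubic map a smooth map of $H$ into itself is the right justification for the Fr\'echet differentiability that the paper takes for granted, so nothing is missing.
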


  The proof of Proposition~\ref{prop derivatives} is mere calculation and will therefore be dropped. In the
  next proposition we characterize $\ker(G_a(a,\zeta,f))$ and $\ker(G_a(a,\zeta,f)^*)$ at a constant solution
  $a\in \R^2\subset H$ of \eqref{Gl freqcomb_neumann}.
  
  \medskip

\begin{proposition} \label{Prop kernels}
  Let $\zeta,f\in\R$. Then for every constant solution $a=(a_1,a_2)\in \R^2\subset H$ of
  \eqref{Gl freqcomb_neumann} we have
  \begin{align}
    \ker(G_a(a,\zeta,f)) &=\spann\{\phi_k(a) : k\in\N_0 \text{ satisfies }\eqref{Gl bifurcation condition}\}
    , \label{Gl kernel Ga}\\
    \ker(G_a(a,\zeta,f)^*) &=\spann\{\phi_k^*(a) : k\in\N_0 \text{ satisfies }\eqref{Gl
    bifurcation condition}\}, \label{Gl kernel Gastar}
  \end{align}
  where
  \begin{equation} \label{Gl bifurcation condition}
    (\zeta+dk^2)^2-4|a|^2(\zeta+dk^2)+1+3|a|^4=0
  \end{equation}
  and
  \begin{align}
    \phi_k&(a)(x) = \alpha^k \cos(kx),\qquad
    \phi_k^*(a)(x) = \beta^k \cos(kx)
    \qquad\text{with }\alpha^k,\beta^k\in\R^2 \text{ given by} \notag \\
	\alpha^k
	&=  \begin{cases}
	\begin{pmatrix}
	1-2a_1a_2 \\ 3 a_1^2+a_2^2-\zeta - d k^2
	\end{pmatrix} \quad\text{if }a_1a_2 \neq \frac{1}{2}
	 \quad\text{or}\quad 3 a_1^2+a_2^2\neq \zeta + d k^2, \\
	 \begin{pmatrix}
	a_1^2+3a_2^2-\zeta-dk^2 \\ -1-2a_1a_2
    \end{pmatrix} \quad\text{if }a_1a_2 = \frac{1}{2} \quad\text{and}\quad
    3 a_1^2+a_2^2= \zeta + d k^2,
    \end{cases} \label{Gl formula phik}  \\
	\beta^k
	&= \begin{cases}
	\begin{pmatrix}
	-1-2a_1a_2 \\ 3a_1^2+a_2^2-\zeta - d k^2
    \end{pmatrix}
     \quad\text{if }a_1a_2 \neq -\frac{1}{2} \quad\text{or}\quad 3a_1^2+a_2^2\neq \zeta+dk^2, \\
     \begin{pmatrix}
	a_1^2+3a_2^2-\zeta-dk^2 \\ 1-2a_1a_2
    \end{pmatrix}  \quad\text{if }a_1a_2 = -\frac{1}{2}  \quad\text{and}\quad 3a_1^2+a_2^2= \zeta+dk^2.
     \end{cases}  \label{Gl formula phikstar}
  \end{align}
\end{proposition}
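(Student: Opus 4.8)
The plan is to reduce the two kernel equations to a constant-coefficient linear ODE eigenvalue problem and then diagonalize it by a cosine Fourier expansion. By Proposition~\ref{prop derivatives} the linearization at the constant $a$ reads $G_a(a,\zeta,f)[\phi]=\sign(d)\phi-D^{-1}(N(a,\zeta)\phi+\sign(d)\phi)$, where $N(a,\zeta)$ is a \emph{constant} matrix precisely because $a$ is constant. Since $D^{-1}$ is injective, $\phi\in\ker(G_a(a,\zeta,f))$ is equivalent to $D(\sign(d)\phi)=N(a,\zeta)\phi+\sign(d)\phi$; recalling $D\phi=-|d|\phi''+\phi$ and $\sign(d)|d|=d$, this collapses to the Neumann eigenvalue problem
\[
  -d\phi''=N(a,\zeta)\phi,\qquad \phi'(0)=\phi'(\pi)=0 .
\]

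First I would expand $\phi$ in the basis $\{\cos(kx):k\in\N_0\}$ of Neumann eigenfunctions of $-\tfrac{d^2}{dx^2}$ on $[0,\pi]$, writing $\phi=\sum_{k\in\N_0}\alpha^k\cos(kx)$ with $\alpha^k\in\R^2$. Because $N(a,\zeta)$ is constant, the problem decouples mode by mode into the algebraic systems $(N(a,\zeta)-dk^2I)\alpha^k=0$, and a nonzero $\alpha^k$ exists precisely when $\det(N(a,\zeta)-dk^2I)=0$. Writing $c:=\zeta+dk^2$, the diagonal entries of $N(a,\zeta)-dk^2I$ are $3a_1^2+a_2^2-c$ and $a_1^2+3a_2^2-c$ and the off-diagonal entries are $-1+2a_1a_2$ and $1+2a_1a_2$; a short computation using $a_1^2+a_2^2=|a|^2$ shows that the two occurrences of $4a_1^2a_2^2$ cancel, leaving $\det(N(a,\zeta)-dk^2I)=c^2-4|a|^2c+1+3|a|^4$. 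This is exactly the bifurcation condition~\eqref{Gl bifurcation condition}, so the indices $k$ appearing in \eqref{Gl kernel Ga} are correctly identified.

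To produce the kernel vectors I would use the standard null-space of a singular $2\times2$ matrix: a vector orthogonal to the first row, namely $(1-2a_1a_2,\,3a_1^2+a_2^2-c)$, lies in the kernel whenever that row is nonzero, which gives the first line of \eqref{Gl formula phik}. If the first row vanishes — i.e. $a_1a_2=\tfrac12$ and $3a_1^2+a_2^2=c$ — then the lower-left entry equals $1+2\cdot\tfrac12=2\neq0$, so the second row is nonzero and a vector orthogonal to it, $(a_1^2+3a_2^2-c,\,-1-2a_1a_2)$, supplies the kernel vector of the second line of \eqref{Gl formula phik}. Since the two off-diagonal entries differ by $2$ they can never vanish simultaneously, so this case distinction is exhaustive. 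This establishes \eqref{Gl kernel Ga} together with \eqref{Gl formula phik}.

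For the adjoint, the hard part will be to pin down $G_a(a,\zeta,f)^*$ with respect to $\langle\cdot,\cdot\rangle_H$ rather than the $L^2$ inner product. Using $\langle u,\psi\rangle_H=\langle Du,\psi\rangle_{L^2}$ for $u\in\dom(D)$ together with the self-adjointness of $D$ on $L^2$, one obtains $\langle D^{-1}w,\psi\rangle_H=\langle w,\psi\rangle_{L^2}$ and, symmetrically, $\langle\phi,w\rangle_{L^2}=\langle\phi,D^{-1}w\rangle_H$. Inserting $w=N(a,\zeta)\phi+\sign(d)\phi$ into $\langle G_a(a,\zeta,f)[\phi],\psi\rangle_H$ and moving the constant matrix $N(a,\zeta)$ across the $L^2$ pairing as its transpose yields $G_a(a,\zeta,f)^*[\psi]=\sign(d)\psi-D^{-1}(N(a,\zeta)^T\psi+\sign(d)\psi)$. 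Thus $G_a^*$ has exactly the same form as $G_a$ with $N(a,\zeta)$ replaced by $N(a,\zeta)^T$. Since transposition leaves the determinant unchanged, condition~\eqref{Gl bifurcation condition} is identical, and repeating the two preceding paragraphs verbatim with $N(a,\zeta)^T-dk^2I$ in place of $N(a,\zeta)-dk^2I$ produces the kernel vectors $\beta^k$ of \eqref{Gl formula phikstar}; the sign changes in the off-diagonal entries $\mp1+2a_1a_2$ account precisely for the differences between \eqref{Gl formula phik} and \eqref{Gl formula phikstar}. This settles \eqref{Gl kernel Gastar} and completes the proof.
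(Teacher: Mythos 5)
Your proof is correct and follows essentially the same route as the paper: reduce $\ker(G_a)$ to the constant-coefficient Neumann problem $-d\phi''=N(a,\zeta)\phi$, expand in the cosine basis so that each mode decouples into $(N(a,\zeta)-dk^2\Id)\alpha^k=0$, identify the vanishing determinant with \eqref{Gl bifurcation condition}, and read off the null vectors with the stated case distinction. The only difference is that you spell out the computation of the adjoint with respect to $\langle\cdot,\cdot\rangle_H$ (arriving at the same operator with $N(a,\zeta)^T$ in place of $N(a,\zeta)$), which the paper compresses into ``a similar calculation shows''.
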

\begin{proof}
  By Proposition \ref{prop derivatives} every function $\phi\in \ker(G_a(a,\zeta,f))$ satisfies $-d\phi'' =
  N(a,\zeta)\phi$ in~$(0,\pi)$ and $\phi'(0)=\phi'(\pi)=0$. From the Fourier series
  expansion
  \begin{equation} \label{fourier}
    \phi(x) = \sum_{k\in \N_0} \alpha^k\cos(kx) \qquad\text{for }x\in [0,\pi]
  \end{equation}
  we obtain the equation
  \begin{equation*}
    \sum_{k\in \N_0} (d k^2 \Id -N(a,\zeta)) \alpha^k \cos(kx) = 0 \qquad\text{for }x\in [0,\pi].
  \end{equation*}
  Hence, for all $k\in \N_0$ the vector $\alpha^k$ lies in the kernel of
  the matrix
    \begin{equation*}
	dk^2\Id-N(a,\zeta) = \begin{pmatrix}
	\zeta+dk^2-3a_1^2-a_2^2 & 1-2a_1a_2 \\
	-1-2a_1a_2 & \zeta+dk^2-a_1^2-3a_2^2
	\end{pmatrix}.
	\end{equation*}
  This implies that $\ker(G_a(a,\zeta,f))$ is nontrivial if and only if the determinant of one of these
  matrices vanishes. Calculating $\det(dk^2\Id-N(a,\zeta))$ for all $k\in\N_0$ we obtain that
  $\ker(G_a(a,\zeta,f))$ is nontrivial if and only if there is a solution $k\in\N_0$ of \eqref{Gl bifurcation
  condition}. In that case the kernel of $dk^2\Id-N(a,\zeta)$ is spanned by the
  vector $\phi_k(a)$ given by \eqref{Gl formula phik} which proves the formula for $\ker(G_a(a,\zeta,f))$ from
  \eqref{Gl kernel Ga}. A similar calculation shows that $\phi^*\in\ker(G_a(a,\zeta,f)^\ast)$ satisfies
  $-d{\phi^*}'' = N(a,\zeta)^T\phi^*$ in $(0,\pi)$ and ${\phi^*}'(0)={\phi^*}'(\pi)=0$. From this the formula
  \eqref{Gl kernel Gastar} for $\ker(G_a(a,\zeta,f)^\ast)$ follows as above.
\end{proof}

\medskip

   Since for every given $\zeta, f\in \R$ equation \eqref{Gl bifurcation condition} has at most two
   different solutions $k_1,k_2\in\N_0$ we know that the spaces $\ker(G_a(a,\zeta,f))$ are at most two-dimensional. In the following
   proposition we single out those parameters for which we have one-dimensional kernels.
   
   \medskip

  \begin{proposition} \label{Prop simple kernels}
    Let $\zeta, f, a$ be chosen as in Proposition~\ref{Prop kernels} such that \eqref{Gl bifurcation
    condition} holds for some $k\in\N_0$. Then $\ker(G_a(a,\zeta,f))$ and
    $\ker(G_a(a,\zeta,f)^*)$ are one-dimensional if and only if
    $$
      -k^2+d^{-1}(4|a|^2-2\zeta)\neq j^2 \quad \text{for all } j\in \N_0\setminus\{k\}.
    $$
  \end{proposition}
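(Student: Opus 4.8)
The plan is to reduce the statement to a counting problem, namely to determine for how many indices $j\in\N_0$ the bifurcation condition \eqref{Gl bifurcation condition} holds. By Proposition~\ref{Prop kernels} the kernel $\ker(G_a(a,\zeta,f))$ is spanned by the functions $\phi_j(a)(x)=\alpha^j\cos(jx)$ indexed by exactly those $j$. First I would record that distinct admissible indices contribute linearly independent elements: for $j\neq j'$ the profiles $\cos(jx)$ and $\cos(j'x)$ are pairwise orthogonal in $H$ and each $\alpha^j\neq 0$, so that $\dim\ker(G_a(a,\zeta,f))$ equals the number of $j\in\N_0$ solving \eqref{Gl bifurcation condition}. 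The identical reasoning applies to $\ker(G_a(a,\zeta,f)^*)$ with the vectors $\beta^j$ in place of $\alpha^j$, so the two kernels always share the same dimension and it suffices to analyze $\ker(G_a(a,\zeta,f))$.

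Next I would read \eqref{Gl bifurcation condition} as a quadratic equation in the single variable $u=\zeta+dj^2$, namely $u^2-4|a|^2u+1+3|a|^4=0$. Since $d\neq 0$, the map $j\mapsto\zeta+dj^2$ is injective on $\N_0$, so each root of this quadratic is attained by at most one index $j$. Consequently \eqref{Gl bifurcation condition} admits at most two solutions $j\in\N_0$, recovering the fact that these kernels are at most two-dimensional, and one-dimensionality is equivalent to $k$ being the only admissible index.

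The decisive step uses Vieta's formulas. Because $u_k:=\zeta+dk^2$ is by hypothesis a root of $u^2-4|a|^2u+1+3|a|^4=0$, the complementary root is $u'=4|a|^2-u_k$. A second admissible index $j\neq k$ exists if and only if $u'=\zeta+dj^2$ for some $j\in\N_0\setminus\{k\}$, which after rearranging reads
\[
  j^2=-k^2+d^{-1}(4|a|^2-2\zeta).
\]
Hence $\ker(G_a(a,\zeta,f))$ is one-dimensional exactly when $-k^2+d^{-1}(4|a|^2-2\zeta)\neq j^2$ for all $j\in\N_0\setminus\{k\}$, which is the asserted criterion.

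No hard estimate enters, so I expect the only point demanding care to be the degenerate configuration in which the quadratic has a double root, i.e.\ $|a|^4=1$ and $u_k=2|a|^2$. There $u'=u_k$, and a short substitution gives $-k^2+d^{-1}(4|a|^2-2\zeta)=k^2$, so the criterion $\neq j^2$ for $j\neq k$ holds automatically; this is consistent with injectivity of $j\mapsto\zeta+dj^2$ forcing $k$ to be the unique admissible index in that case. Thus the main (and rather modest) obstacle is the bookkeeping of these edge cases together with the verification that the profiles $\phi_j$ and $\phi_j^*$ are nonzero and mutually independent across the admissible indices.
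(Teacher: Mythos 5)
Your proof is correct and follows essentially the same route as the paper: the paper subtracts the two instances of \eqref{Gl bifurcation condition} for indices $k$ and $j$ and divides by $\zeta+dk^2-(\zeta+dj^2)\neq 0$, which is just Vieta's formula for the quadratic in $u=\zeta+dj^2$ in disguise, and arrives at the identical criterion $j^2=-k^2+d^{-1}(4|a|^2-2\zeta)$. Your extra remarks on the double-root case and on the nonvanishing of the vectors $\alpha^j$ are correct but not needed beyond what the paper already records.
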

  \begin{proof}
    Let $\ker(G_a(a,\zeta,f))$ contain two linearly independent nontrivial vectors. Proposition~\ref{Prop
    kernels} then implies that equation \eqref{Gl bifurcation condition} has a second solution
    $j\in\N_0$ which gives
    $$
     (\zeta+dk^2)^2 - 4(\zeta+dk^2)|a|^2 = (\zeta+dj^2)^2 - 4(\zeta+dj^2)|a|^2\quad\text{and}\quad
     \zeta+dk^2\neq \zeta+dj^2.
    $$
    From this we infer $2\zeta+dk^2+dj^2=4|a|^2$ or equivalently
    $$
      -k^2+ d^{-1}(4|a|^2-2\zeta) = j^2  \quad\text{for some } j\in \N_0\setminus\{k\}.
    $$
    Vice versa, by \eqref{Gl kernel Ga}, this condition implies that $\ker(G_a(a,\zeta,f))$ is
    two-dimensional and the result follows.
  \end{proof}

 \medskip

 \noindent
 {\bf Remark.} \label{rem periodic problem} The applicability of the Crandall-Rabinowitz Theorem
   relies on the simplicity of the kernel of the linearized equation, which we will check using
   Proposition~\ref{Prop simple kernels}. In the setting of $2\pi$-periodic functions
   simplicity of the kernel of the linearized equation never holds. This can be seen as follows:
   First notice that \eqref{Gl bifurcation condition}, which is a necessary condition for
   bifurcation for the Neumann problem \eqref{Gl freqcomb_neumann}, is also a necessary condition for
   bifurcation for the $2\pi$-periodic problem \eqref{Gl freqcomb}. The proof from above only needs small
   changes: the operator $D\phi := (-|d|\phi_1'' + \phi_1,-|d|\phi_2''+\phi_2)$, now equipped with periodic
   boundary conditions on $[0,2\pi]$, has a compact inverse $D^{-1}: H_{\rm per}\to H_{\rm per}$ where $H_{\rm
   per}$ denotes the restriction of $2\pi$-periodic functions from $H^1(\R;\R^2)$ to the interval $(0,2\pi)$.
   Furthermore, in the Fourier series expansion \eqref{fourier} the terms $\tilde \alpha^k \sin(kx)$ with
   vectors $\tilde \alpha^k\in\R^2$ additionally occur. The vanishing of $\det(dk^2\Id-N(a,\zeta))$ then
   appears in the same way as a necessary condition for the nontrivial solvability of the linear equation
   $-d\phi'' = N(a,\zeta)\phi$ by a $2\pi$-periodic function $\phi$. However, with 
   $\alpha^k \cos(kx)$ belonging to the $\ker(G_a(a,\zeta,f))$ for some $k\in \N$ also  
   $\alpha^k \sin(kx)$ belongs to the kernel making it at least
   two-dimensional. This is one of the reasons why we chose to consider synchronized solutions rather than
   periodic solutions.

 \medskip

 \subsection{Determination of all possible bifurcation points} \label{subsec Det bifpoints} ~

 \medskip

 First let us mention that the solutions of \eqref{Gl bifurcation condition} for $k=0$ do not give rise to
 bifurcation from $\hat\Gamma_f,\bar\Gamma_\zeta$ regardless of whether the assumptions (S),(T) are satisfied.
 This is not in contradiction with the Crandall-Rabinowitz Theorem for the following reason. In our analysis
 we use the parameterizations of $\hat\Gamma_f,\bar\Gamma_\zeta$ from Lemma~\ref{trivial} having the property
 that $t\mapsto \hat\zeta(t),s\mapsto \bar f(s)$ may not be injective for some parameter samples. In our
 bifurcation analysis related to $k=0$ this inconvenience leads to a false prediction of bifurcation in the
 following way.
 In order to keep the explanations short we explain the situation only for the bifurcation analysis related to
 $\hat\Gamma_f$. Since we use $t$ (and not $\zeta$) as the parameter in the Crandall-Rabinowitz theorem we
 find a bifurcating branch w.r.t. $t$ whenever \eqref{Gl bifurcation condition} as well as (S) and (T) are
 satisfied for some $k\in\N_0$ and some $t_0\in (-1,1)$. One can check that in the special case $k=0$ this
 is equivalent to saying that the curve $\hat\zeta$ has a turning point at $t_0$, i.e. we have
 $\hat\zeta'(t_0)=0,\hat\zeta''(t_0)\neq 0$. As a consequence, for any given $\eps$ close enough to $0$ there
 is a value $\delta_\eps$ converging to $0$ as $\eps\to 0$ such that $\delta_\eps\cdot \eps <0$ and
 $\hat\zeta(t_0+\eps)=\hat\zeta(t_0+\delta_\eps)$. Hence the bifurcation theorem detects the branch
 $(t_0+\eps,\hat a(t_0+\delta_\eps))$ bifurcating from $(t_0+\epsilon,\hat a(t_0+\epsilon))$ at $\epsilon=0$.
 Clearly, $(\hat\zeta(t_0+\eps),\hat a(t_0+\delta_\eps))=(\hat\zeta(t_0+\delta_\eps),\hat
 a(t_0+\delta_\eps))$ still lies on $\hat\Gamma_f$ and so this branch bifurcates with respect to the variable
 $t$, but not with respect to $\zeta$.

 For that reason the case $k=0$ will be left aside when we determine the possible bifurcation points.
 Note that this phenomenon could be avoided if we locally parameterized the trivial
 solution families $\hat\Gamma_f$ by $\zeta$. However, since this parameterization is in general not global
 further technical complications would arise.

 \medskip
 
\noindent
 {\it In Theorem \ref{Thm 3 bifurcation zeta}:} For given $f\in\R$ we have to determine all $t\in (-1,1)$
 such that $\hat F_a(0,t)=G_a(\hat a(t),\hat\zeta(t),f)$ has a nontrivial kernel. According to Proposition
 \ref{Prop kernels} this is the case if and only if there is $k\in\N_0$ such that
 \begin{equation} \label{Gl bifpoints 1}
   (\hat\zeta(t)+dk^2-2|\hat a(t)|^2)^2 = |\hat a(t)|^4-1.
 \end{equation}
 In particular this implies $1\leq |\hat a(t)|^2 = f^2(1-t^2)$ (see Lemma~\ref{trivial})
 so that $|f|\geq 1$ is a necessary condition for bifurcation from $\hat\Gamma_f$. Furthermore, in case
 $|f|\geq 1$, we obtain from the formulas for
 $\hat a(t),\hat\zeta(t)$ (see Lemma \ref{trivial}) and \eqref{Gl bifpoints 1}
 \begin{equation*}
   |t|\leq 1-|f|^{-2}\quad\text{and}\quad
   dk^2 = f^2(1-t^2)-\frac{t}{\sqrt{1-t^2}}-\sigma\sqrt{f^4(1-t^2)^2-1}
 \end{equation*}
 for some $\sigma\in\{-1, 1\}$ so that part (i) of  Theorem \ref{Thm 3 bifurcation zeta} is proved.

 \medskip

\noindent
 {\it In Theorem \ref{Thm 4 bifurcation f}:} Now let $\zeta\in\R$ be fixed.  Proposition~\ref{Prop
 kernels} and $|\bar a(s)|^2=s^2$ imply that the operator $\bar F_a(0,s)=G_a(\bar a(s),\zeta,\bar f(s))$ has
 a nontrivial kernel if and only if
 \begin{equation*}
   (\zeta+dk^2 - 2s^2)^2 = s^4-1.
 \end{equation*}
 This implies $(\zeta+dk^2)^2\geq 3$ and
 $s^2=\frac{2}{3}(\zeta+dk^2)-\frac{{\sigma}}{3}((\zeta+dk^2)^2-3)^{1/2}$ for some {$\sigma\in\{-1, 1\}$}. From
 the nonnegativity of $s^2$ we infer $\zeta+dk^2\geq 0$ and thus $\zeta+dk^2\geq \sqrt 3$.
 Hence, $|s|$ is given by the formula~\eqref{Gl bifpoints Thm4} for {$\sigma\in\{-1, 1\}$} and part
 (i) of Theorem \ref{Thm 4 bifurcation f} is proved.

 \medskip

 \subsection{Number of bifurcation points} \label{subsec number bifpoints} ~

  \medskip

\noindent
  {\it In Theorem \ref{Thm 3 bifurcation zeta}:}\; We have to prove that for all $f\in\R$ the
  trivial solution family $\hat\Gamma_f$ contains at most $\hat k(f)$ bifurcation points where
  $\hat k(f)$ was defined in Theorem~\ref{Thm 3 bifurcation zeta}~(ii). By Proposition~\ref{Prop kernels}
  every bifurcation point $(a,\zeta)\in\hat\Gamma_f$ satisfies the quadratic equation \eqref{Gl bifurcation
  condition} for some $k\in\N$. Hence every $k$ gives rise to at most two bifurcation points and therefore it suffices to prove $2k\leq \hat k(f)$. Formula \eqref{Gl
  bifurcation condition} implies
  \begin{equation*}
    0\leq (\zeta+dk^2-2|a|^2)^2=|a|^4-1.
  \end{equation*}
  This shows that bifurcation can only occur if $|a|\geq 1$ and together with \eqref{das_simple} 
  we get 
  \begin{equation*}
    f^2 \geq |a|^2 \quad \mbox{and}\quad f^2 \geq 1+ (|a|^2-\zeta)^2.
  \end{equation*}
  Substituting $\zeta$ from \eqref{Gl bifurcation condition} we obtain $\zeta+dk^2-2|a|^2=
  \pm \sqrt{|a|^4-1}$ and thus
  \begin{align*}
    |d|k^2
    &\leq  \big| dk^2-|a|^2\mp\sqrt{|a|^4-1} \big| + |a|^2+\sqrt{|a|^4-1} \\
    &= \big| |a|^2-\zeta| \big| + |a|^2 + \sqrt{|a|^4-1} \\
    &\leq \sqrt{f^2-1}+f^2+\sqrt{f^4-1}.
  \end{align*}
  From this inequality we directly conclude $2k\leq \hat k(f)$.

  \medskip

\noindent
 {\it In Theorem \ref{Thm 4 bifurcation f}:}\; Let $\zeta\in\R$. Arguing as above we find that
 every bifurcation point $(a,\zeta)\in\bar\Gamma_f$ satisfies equation \eqref{Gl bifpoints Thm4} for some
 $k\in\N, {\sigma\in \{-1, 1\}}$ and hence gives rise to at most four bifurcation points. Therefore, in case $d<0$, we have to prove $4k\leq \bar k(\zeta)$.
 Indeed, in that case the inequality $\zeta+dk^2\geq \sqrt{3}$ from \eqref{Gl bifpoints Thm4} implies
 $k\leq (|d|^{-1}(\zeta-\sqrt{3})_+)^{1/2}$ which is all we had to show. 

 \medskip

 \subsection{Simplicity of the kernels} ~ \label{simple_kernels}

  \medskip

  \noindent
  {\it In Theorem \ref{Thm 3 bifurcation zeta}:}\; Let $f\in\R$ and
  let $(\hat a(t),\hat\zeta(t))$ be a possible bifurcation point with respect to $\hat\Gamma_f$, i.e., we
  assume that $t\in (-1,1)$ satisfies equation \eqref{Gl bifpoints Thm3} for some $k\in\N$ and some $\sigma\in\{-1, 1\}$. Then
  Proposition~\ref{Prop simple kernels} implies that $\ker(\hat F_a(0,t))$ is one-dimensional if and only if
  we have
  \begin{equation} \label{simple_f}
  -k^2+d^{-1}(4|\hat a(t)|^2-2\hat\zeta(t))\notin (\N_0\sm\{k\})^2.
  \end{equation}
  Since we know from Lemma~\ref{trivial}(a) that
  \begin{equation*}
    4|\hat a(t)|^2-2\hat\zeta(t)
    = 2f^2(1-t^2)-2t(1-t^2)^{-1/2}
  \end{equation*}
  \eqref{simple_f} is guaranteed by condition (S) of Theorem \ref{Thm 3 bifurcation zeta}~(iii) and we are done.

  \medskip

\noindent
  {\it In Theorem \ref{Thm 4 bifurcation f}:}\;  Let $\zeta\in\R$ and let $(\bar a(s),\bar f(s))$ be
  a possible bifurcation point with respect to $\bar\Gamma_\zeta$, i.e., we assume that
  $s$ is given by \eqref{Gl bifpoints Thm4} for some $k\in\N, {\sigma\in\{-1, 1\}}$. As above, Lemma~\ref{trivial}(b) implies the
  equation
  \begin{equation*}
    4|\bar a(s)|^2-2\zeta
    = 4s^2-2\zeta
    \stackrel{\eqref{Gl bifpoints Thm4}}{=}
    \frac{2}{3}\Big(\zeta+4dk^2-2{{\sigma}}\sqrt{(\zeta+dk^2)^2-3}\Big)
  \end{equation*}
  which shows that condition (S) from Theorem \ref{Thm 4 bifurcation f}~(iii) guarantees the
  simplicity of $\ker(\bar F_a(0,s))$.

  \medskip

 \subsection{Transversality condition} ~ \label{transversality}

  \medskip

  In the calculations related to the verification of the transversality condition we will use the following short-hand notations.
  In the context of Theorem \ref{Thm 3 bifurcation zeta} where $(t,k)$ is a solution of \eqref{Gl bifpoints
  Thm3} we write
  \begin{equation*}
    a=(\hat a_1(t),\hat a_2(t)),\quad\zeta=\hat\zeta(t), \quad \dot a = \left(\frac{d \hat a_1}{d
    t}(t),\frac{d \hat a_2}{d t}(t)\right),\quad \dot\zeta=\frac{d\hat\zeta}{dt}(t)
  \end{equation*}
  and in the context of Theorem \ref{Thm 4 bifurcation f} where $(s,k)$ is a solution of \eqref{Gl bifpoints Thm4} we write
  \begin{equation*}
    a=(\bar a_1(s),\bar a_2(s)),\quad f=\bar f(s), \quad \dot a = \left(\frac{d \bar a_1}{d s}(s),\frac{d
    \bar a_2}{d s}(s)\right),\quad \dot f=\frac{d\bar f}{ds}(s).
  \end{equation*}
  Furthermore we will use
  \begin{equation*}
    \phi(x):=\phi_k(a)(x)=\alpha
    \cos(kx),\quad \phi^*(x):=\phi_k^*(a)(x)=\beta\cos(kx),
  \end{equation*}
  where the vectors $\alpha=\alpha^k, \beta=\beta^k\in \R^2$ were defined in \eqref{Gl formula phik},
  \eqref{Gl formula phikstar}. We have to check the transversality condition in the possible bifurcation
  points that we determined in Section~\ref{subsec Det bifpoints}. Hence we may use \eqref{Gl bifurcation
  condition}, i.e.,
  \begin{equation*}
    (\zeta+dk^2)^2-4(\zeta+dk^2)|a|^2+1+3|a|^4=0.
  \end{equation*}
  In view of the formulas \eqref{Gl formula phik} and \eqref{Gl formula phikstar} we will have to
  investigate the following cases:
  \begin{itemize}
    \item[] {\it Case (1):} $|a_1a_2|\neq \frac{1}{2}$ or $3a_1^2+a_2^2\neq \zeta+dk^2$
    \item[] {\it Case (2):} $a_1a_2=\frac{1}{2}$ and $3a_1^2+a_2^2=\zeta+dk^2$
    \item[] {\it Case (3):} $a_1a_2=-\frac{1}{2}$ and $3a_1^2+a_2^2=\zeta+dk^2$
  \end{itemize}
  In all three cases Proposition \ref{prop derivatives} yields the following formula for every constant
  $\psi\in \R^2\subset H$:
  \begin{align*}
   T(\psi)
   := \langle G_{aa}(a,\zeta,f)[\phi,\psi], \phi^* \rangle_H
   =&  \langle DG_{aa}(a,\zeta,f)[\phi,\psi], \phi^* \rangle_{L^2} \\
   =&
     - \Big\langle
      \begin{pmatrix}
        \phi^TM_1(a)\psi\\
        \phi^TM_2(a)\psi
      \end{pmatrix}
      ,\phi^* \Big\rangle_{L^2} \\
   \stackrel{\eqref{Gl formula phik}}{=}&
     - \Big\langle
      \begin{pmatrix}
        \alpha^TM_1(a)\psi\\
        \alpha^T M_2(a)\psi
      \end{pmatrix}
      \cos(k\cdot)
      ,\beta \cos(k\cdot)\Big\rangle_{L^2} \\
   =&  -\begin{pmatrix}
        \alpha^TM_1(a)\psi\\
        \alpha^TM_2(a)\psi
      \end{pmatrix}^T  \beta \int_0^\pi \cos(kx)^2 \,dx\\
    =&
   - {\pi}\, \alpha^T
     \begin{pmatrix}
       3a_1\beta_1 + a_2\beta_2 & a_1\beta_2+a_2\beta_1  \\
       a_1\beta_2+a_2\beta_1 & a_1\beta_1 +  3a_2\beta_2
     \end{pmatrix}
      \psi
    \end{align*}
    Using  \eqref{Gl formula phik}, \eqref{Gl formula phikstar} and
    $(\zeta+dk^2)^2-4|a|^2(\zeta+dk^2)+1+3|a|^4=0$  we find in case (1)
    \begin{align*}
      &\; T(\psi) \\
      &= -{\pi}
     \begin{pmatrix}
       a_1(-3+6a_1^2a_2^2-3a_2^4+9a_1^4+ (2a_2^2-6a_1^2)(\zeta+dk^2)+(\zeta+dk^2)^2) \\
       a_2(-1+15a_1^4+3a_2^4+18a_1^2a_2^2- (14a_1^2+6a_2^2)(\zeta+dk^2)+3(\zeta+dk^2)^2)
    \end{pmatrix}^T\psi  \\
     &=  -{\pi} \begin{pmatrix}
       a_1(-4+6a_1^4-6a_2^4+(6a_2^2-2a_1^2)(\zeta+dk^2)) \\
       a_2(-4+6a_1^4-6a_2^4+(6a_2^2-2a_1^2)(\zeta+dk^2))
    \end{pmatrix}^T\psi  \\
    &= -{2\pi} (-2+3a_1^4-3a_2^4+(3a_2^2-a_1^2)(\zeta+dk^2)) a^T\psi.
  \end{align*}
  In case (2) one has $\alpha=(2(a_2^2-a_1^2),-2)^T,\beta=(-2,0)^T$ and using $a_1a_2=\frac{1}{2}$ one
  obtains
  \begin{equation*}
    T(\psi)
     = {2\pi} \alpha^T \begin{pmatrix}
     3a_1 & a_2 \\
     a_2 & a_1
     \end{pmatrix}\psi
     = {2\pi} \begin{pmatrix}
       2a_1(3a_2^2-3a_1^2-\frac{a_2}{a_1}) \\
       2a_2(a_2^2-a_1^2-\frac{a_1}{a_2})
     \end{pmatrix}^T\psi
     = -{2\pi}(6a_1^2-2a_2^2)a^T\psi
  \end{equation*}
  while in case (3) we may use $\alpha=(2,0)^T,\beta=(2(a_2^2-a_1^2),2)^T$ and $a_1a_2=-\frac{1}{2}$ to get
  \begin{equation*}
    T(\psi)
     = -{2\pi} \begin{pmatrix}
       3a_1\beta_1+a_2\beta_2 \\
       a_1\beta_2+a_2\beta_1
     \end{pmatrix} \psi
     = -{2\pi}(2a_2^2-6a_1^2)a^T\psi.
  \end{equation*}
  Summarizing these calculations we find
  \begin{align}
      T(\psi)
      = -{2\pi}\, a^T\psi \cdot \begin{cases}
        -2+3a_1^4-3a_2^4+(3a_2^2-a_1^2)(\zeta+dk^2) &\text{ in case }(1), \\
        6a_1^2- 2a_2^2 &\text{ in case }(2), \\
        2a_2^2-6a_1^2 &\text{ in case }(3).
      \end{cases}
      \label{Gl formula transversality0}
  \end{align}
  In a similar way we obtain in case (1)
  \begin{align*}
   \langle \phi,\phi^*\rangle_{L^2}
   &= \alpha^T \beta \int_0^\pi \cos(kx)^2\,dx  \\
   &= {\frac{\pi}{2}} \, \big(-1+4a_1^2a_2^2 + (3a_1^2+a_2^2-\zeta-dk^2)^2\big) \\
   &= {\frac{\pi}{2}}\, \Big( (\zeta+dk^2)^2-2(\zeta+dk^2)(3a_1^2+a_2^2) -1+4a_1^2a_2^2 +
   (3a_1^2+a_2^2)^2  \Big).
 \end{align*}
 Using again $(\zeta+dk^2)^2-4|a|^2(\zeta+dk^2)+1+3|a|^4=0$ in case (1) and performing the corresponding
 calculations for the cases (2) and (3) we arrive at
  \begin{align}
   \langle \phi,\phi^*\rangle_{L^2}
   &= {\pi} \begin{cases}
      (\zeta+dk^2)(-a_1^2+a_2^2) -1+3a_1^4 + 2a_1^2a_2^2 - a_2^4 &\text{ in case }(1), \\
      2a_1^2- 2a_2^2 & \text{ in case }(2), \\
      2a_2^2-2a_1^2 & \text{ in case } (3).
   \end{cases}
   \label{Gl formula transversality1}
 \end{align}
 Now we are going to use the formulas \eqref{Gl formula transversality0},\eqref{Gl formula
 transversality1} in the concrete settings of Theorem~\ref{Thm 3 bifurcation zeta} and Theorem~\ref{Thm 4
 bifurcation f}.

  \medskip

 \noindent
 {\it In Theorem \ref{Thm 3 bifurcation zeta}:}\;  Let $f\in\R$ and let $t\in (-1,1)$ satisfy equation
 \eqref{Gl bifpoints Thm3} for some $k\in\N$ and some $\sigma\in \{-1,1\}$. Since $\dot a\in \R^2\subset H$
 is a constant vector the formulas \eqref{Gl formula transversality0},\eqref{Gl formula transversality1} and
 Proposition~\ref{prop derivatives} yield in case (1)
 \begin{align*}
   &\;\langle \hat F_{at}(0,t)[\phi],\phi^*\rangle_H \\
   &\stackrel{\eqref{Gl defn hatF, barF}}{=} \langle G_{aa}(a,\zeta,f)[\phi,\dot a],\phi^*\rangle_H +
   \dot\zeta \langle G_{a\zeta}(a,\zeta,f)[\phi],\phi^*\rangle_H  \\
   &\stackrel{\eqref{Gl formula transversality0}}{=}
   - {2\pi} \, a^T \dot a\, (-2+3a_1^4-3a_2^4+(3a_2^2-a_1^2)(\zeta+dk^2))
    + \dot\zeta \langle D^{-1}\phi , \phi^* \rangle_H \\
   &= -{2\pi}\, a^T \dot a\, (-2+3a_1^4-3a_2^4+(3a_2^2-a_1^2)(\zeta+dk^2))
    + \dot\zeta \langle \phi , \phi^* \rangle_{L^2} \\
   &\stackrel{\eqref{Gl formula transversality1}}{=}
   {\pi} \Big( -(-4+6a_1^4-6a_2^4+(6a_2^2-2a_1^2)(\zeta+dk^2))a^T \dot a  \\
   &\qquad  +  \dot \zeta \big((\zeta+dk^2)(-a_1^2+a_2^2) -1+3a_1^4 + 2a_1^2a_2^2 - a_2^4 \big)\Big).
 \end{align*}
 We will now insert the trivial solution $(a_1,a_2,\zeta)=(\hat a_1(t),\hat a_2(t),\hat\zeta(t))$ from Lemma~\ref{trivial} and use the identities
  $$
  a^T \dot a
  = \frac{d}{dt} \frac{|\hat a(t)|^2}{2}
   = \frac{d}{dt} \frac{f^2(1-t^2)}{2}
   = -f^2t, \qquad \dot \zeta = -2f^2t+ (1-t^2)^{-3/2}.
 $$
Notice also that the necessary condition \eqref{Gl bifurcation condition} for
bifurcation becomes
$$
  \zeta + dk^2 = 2f^2(1-t^2)-\sigma \sqrt{f^4(1-t^2)^2-1}
$$
so that \eqref{Gl bifpoints Thm3} is proved. After a lengthy computation we obtain
 \begin{align*}
  &\; \langle \hat F_{at}(0,t)[\phi],\phi^*\rangle_H \\
  &= {\pi} \Big( 4 f^6
 t^3(1-t^2)^2+f^4(1-t^2)^{1/2}-2tf^2-(1-t^2)^{-3/2}\\
 & -\sigma \sqrt{f^4(1-t^2)^2-1}\Bigl(4f^4 t^3(1-t^2)+f^2(2t^2-1)(1-t^2)^{-1/2}\Bigr) \Big)
 \end{align*}
 and hence the transversality condition (H2) from Theorem~\ref{Thm Crandall-Rabinowitz} is satisfied
 whenever the right-hand side is nonzero, i.e. when condition (T) holds. In case (2) or (3) the transversality
 condition is always satisfied. Indeed, proceeding as above and using the explicit formulas
 \begin{equation*}
   a_1^2=\frac{\sqrt{1-t^2}}{2|t|},\qquad a_2^2=\frac{|t|}{2\sqrt{1-t^2}},\qquad f^2 =
   \frac{1}{2|t|(1-t^2)^{3/2}}
 \end{equation*}
 from Lemma~\ref{trivial}~(a) we find in case (2), where $t<0$, that
 \begin{align*}
    \langle \hat F_{at}(0,t)[\phi],\phi^*\rangle_H
   &= T(\dot a) + \dot\zeta \langle \phi , \phi^* \rangle_{L^2} \\
   &\stackrel{ \eqref{Gl formula transversality0}}{=} -{2\pi}\, a^T\dot a\,(6a_1^2-2a_2^2) +
   \dot\zeta\cdot {\pi}(2a_1^2-2a_2^2) \\
   &= {2\pi}\, \big( f^2t(6a_1^2-2a_2^2)+(-2f^2t+(1-t^2)^{-3/2})(a_1^2-a_2^2)  \big) \\
   &= {2\pi}\,\big( f^2t\cdot 4a_1^2 + (1-t^2)^{-3/2}(a_1^2-a_2^2)\big) \\
   &= - \frac{{\pi}}{|t|(1-t^2)^2}
    \neq 0
 \end{align*}
 and similarly in case (3), where $t>0$, that
 $$
   \langle \hat F_{at}(0,t)[\phi],\phi^*\rangle_H
   = + \frac{{\pi}}{|t|(1-t^2)^2}
   \neq 0.
 $$

 \medskip

  \noindent
  {\it In Theorem \ref{Thm 4 bifurcation f}:}\;  Now let $\zeta\in\R$ and let $s$ be given by \eqref{Gl
  bifpoints Thm4}. Since $\dot a\in \R^2\subset H$ is a constant vector we get from \eqref{Gl formula
  transversality0} and Proposition~\ref{prop derivatives} in case (1)
  \begin{align*}
   \langle \bar F_{as}(0,s)[\phi],\phi^*\rangle_H
   &\stackrel{\eqref{Gl defn hatF, barF}}{=} \langle G_{aa}(a,\zeta,f)[\phi,\dot a],\phi^*\rangle_H + \dot f \langle
   \underbrace{G_{af}(a,\zeta,f)[\phi]}_{=0} ,\phi^*\rangle_H
   \\
   &\stackrel{ \eqref{Gl formula transversality0}}{=}
     - {2\pi}\, a^T\dot a\,(-2+3a_1^4-3a_2^4+(3a_2^2-a_1^2)(\zeta+dk^2)).
 \end{align*}
 Recalling
  $$
  a^T \dot a
   = \frac{d}{ds} \frac{|\bar a(s)|^2}{2}
   = \frac{d}{ds} \frac{s^2}{2}= s
 $$
 we obtain
 $$
   \langle \bar F_{as}(0,s)[\phi],\phi^*\rangle_H =  -
   {2\pi}s(-2+3a_1^4-3a_2^4+(3a_2^2-a_1^2)(\zeta+dk^2)).
$$
Now we substitute $(a_1, a_2)=(\bar a_1(s), \bar a_2(s))$ from Lemma~\ref{trivial} and the value $s$
from \eqref{Gl bifpoints Thm4} of Theorem~\ref{Thm 4 bifurcation f} to obtain after a lengthy calculation
  \begin{align*}
    &\; \langle \bar F_{as}(0,s)[\phi],\phi^*\rangle_H \\
    =& \frac{{2\pi\sigma} s}{27(1+(s^2-\zeta)^2)} \sqrt{(\zeta+dk^2)^2-3}
 \Big(2\zeta+ 5dk^2-4{\sigma}\sqrt{(\zeta+dk^2)^2-3}\Big) \\
& \cdot \Big(4\zeta+dk^2-2{\sigma}\sqrt{(\zeta+dk^2)^2-3}\Big)
 \underbrace{\Big(\zeta+dk^2+{\sigma} \sqrt{(\zeta+dk^2)^2-3}\Big)}_{\not =0}.
  \end{align*}
  Hence, using $s\neq 0$ (from \eqref{Gl bifurcation condition} we get $|s|=|a(s)|=|a|\geq 1$) we find that
  the transversality condition holds if and only if
  \begin{align*}
     \zeta+dk^2\neq\sqrt{3}\quad
     &\text{and}\quad
     4\zeta+dk^2- 2{\sigma}\sqrt{(\zeta+dk^2)^2-3}\neq 0 \\
     &\text{and}\quad 2\zeta+5dk^2-4{\sigma}\sqrt{(\zeta+dk^2)^2-3}\neq 0
  \end{align*}
  which is precisely assumption (T) in Theorem \ref{Thm 4 bifurcation f}.

  \medskip

  In case (2) and (3) we have $|T(\dot a)| = {2\pi}|s||3a_1^2-a_2^2|$. So let us identify
  those values of $a_1, a_2, s, \zeta, d$ and $k\in \N_0$ where
  \begin{equation*}
    a_1^2a_2^2=\frac{1}{4}, \quad 3a_1^2=a_2^2 \mbox{ and } 3a_1^2+a_2^2=\zeta+dk^2 \geq \sqrt{3},
  \end{equation*}
  i.e., the situations in case (2), (3) where transversality fails. Since in this case $a_1^2= 1/\sqrt{12}$, $a_2^2= 3/\sqrt{12}$ we see that $s^2=a_1^2+a_2^2=2/\sqrt{3}$, i.e.,
  \begin{equation*}
  s = \pm \sqrt{ \frac{2}{\sqrt{3}}}\; \mbox{ and either }\;\zeta = -\frac{1}{\sqrt{3}}\; \mbox{ or }\;
    \zeta = \frac{5}{\sqrt{3}}.
  \end{equation*}
  In all of these cases $3a_1^2+a_2^2 = \sqrt{3}$, i.e, $d$ and $k$ must be such that $\zeta^2+dk^2=
  \sqrt{3}$ so that the necessary conditions of bifurcation is satisfied but transversality fails. However,
  this is already covered by condition (T) which excludes $\zeta+dk^2=\sqrt{3}$. This finishes part (iii) of
  Theorem~\ref{Thm 4 bifurcation f}.

  
  \section{Illustrations} \label{illustrations}

  
  Below we highlight in five subsections different interesting features of our bifurcation results. All
  bifurcation diagrams are plotted with respect to the $L^2$-norm of the solutions over the interval
  $(0,\pi)$. The $L^2$-norm of a given solution represents the electrical power of the correpsonding state inside the ring resonator.
  In order to illustrate our main results from Theorem~\ref{Thm 3 bifurcation zeta} and Theorem~\ref{Thm 4
  bifurcation f} we numerically followed the primary bifurcation branches which emanate from the trivial curve.
  So-called points of secondary bifurcations, i.e., bifurcation points on these primary branches were very often detected by AUTO. However, for the sake of clarity, most of them were not included into our
  diagrams. In Sections~\ref{zeta=0,d=0.1} and \ref{zeta=10,d=-0.2} we illustrate the results of Theorem~\ref{Thm 4
  bifurcation f}  for fixed $\zeta$ and for positive resp. negative dispersion coefficient $d$. In Sections~\ref{f=1.6,d=0.1} and
  \ref{f=2,d=-0.1} we show corresponding bifurcation diagrams based on Theorem~\ref{Thm 3 bifurcation zeta}. In Section~\ref{dynamic} we demonstrate that dynamic detuning is a method to generate soliton combs as in \cite{Herr2013}.
  
  \subsection{The case $\zeta=0,d=0.1$} \label{zeta=0,d=0.1}
  
  
  Here we have infinitely many bifurcation points on the trivial branch. Four pairs of those bifurcation
  points are shown. The trivial branch has no turning point and we only observe periodic Turing patterns. This means
  that the bifurcating solutions look as if they were multiples of the functions lying in the kernel of the
  linearized operator at the bifurcation point. In particular, no solitons were found.
  
\begin{figure}[H]\label{branches}
\centering
\begin{minipage}[c]{0.41\textwidth}
  The table provides some bifurcation points on the trivial branch. The bifurcation
  points at $s$ given by \eqref{Gl bifpoints Thm4}  
  are predicted by Theorem~\ref{Thm 4 bifurcation f} since the conditions (S) and (T) are satisfied.
  The $f-$values found by AUTO are listed as well. 
\end{minipage} \quad
\begin{minipage}[c]{0.55\textwidth}
\tiny
\centering 
\begin{tabular}{|c|c|c|c|c|c|c|}
\hline
$k$ & $\sigma$ & $s$ & curve & $f$ AUTO & $f$ Thm~\ref{Thm 4 bifurcation f} &  label \\
\hline
5 & ~1 & 1.03235 & red & ~1.50873 & ~1.50871 & 1\\
5 & -1 & 1.50582 & red & ~3.73196 & ~3.73195 & 5 \\
6 & ~1 & 1.16104 & green & ~1.94874 & ~1.94874 & 2\\
6 & -1 & 1.85795 & green & ~6.67731 & ~6.67731 & 6 \\
7 & ~1 & 1.31863 & blue & ~2.64489 &  ~2.64494 & 3\\
7 & -1 & 2.18965 & blue & 10.72430 & 10.72430 & 7\\
8 & ~1 & 1.48760 & black & ~3.61248 & ~3.61248 & 4\\
8 & -1 & 2.51404 & black & 16.08740 & 16.08736 & 8\\
\hline
\end{tabular}
\caption{Bifurcation points on trivial branch.}
\label{list_branches_1}
\end{minipage}
\end{figure} 
\vspace{-\baselineskip}
\begin{figure}[H]
\begin{minipage}[c]{0.4\textwidth}
\includegraphics[width=\textwidth]{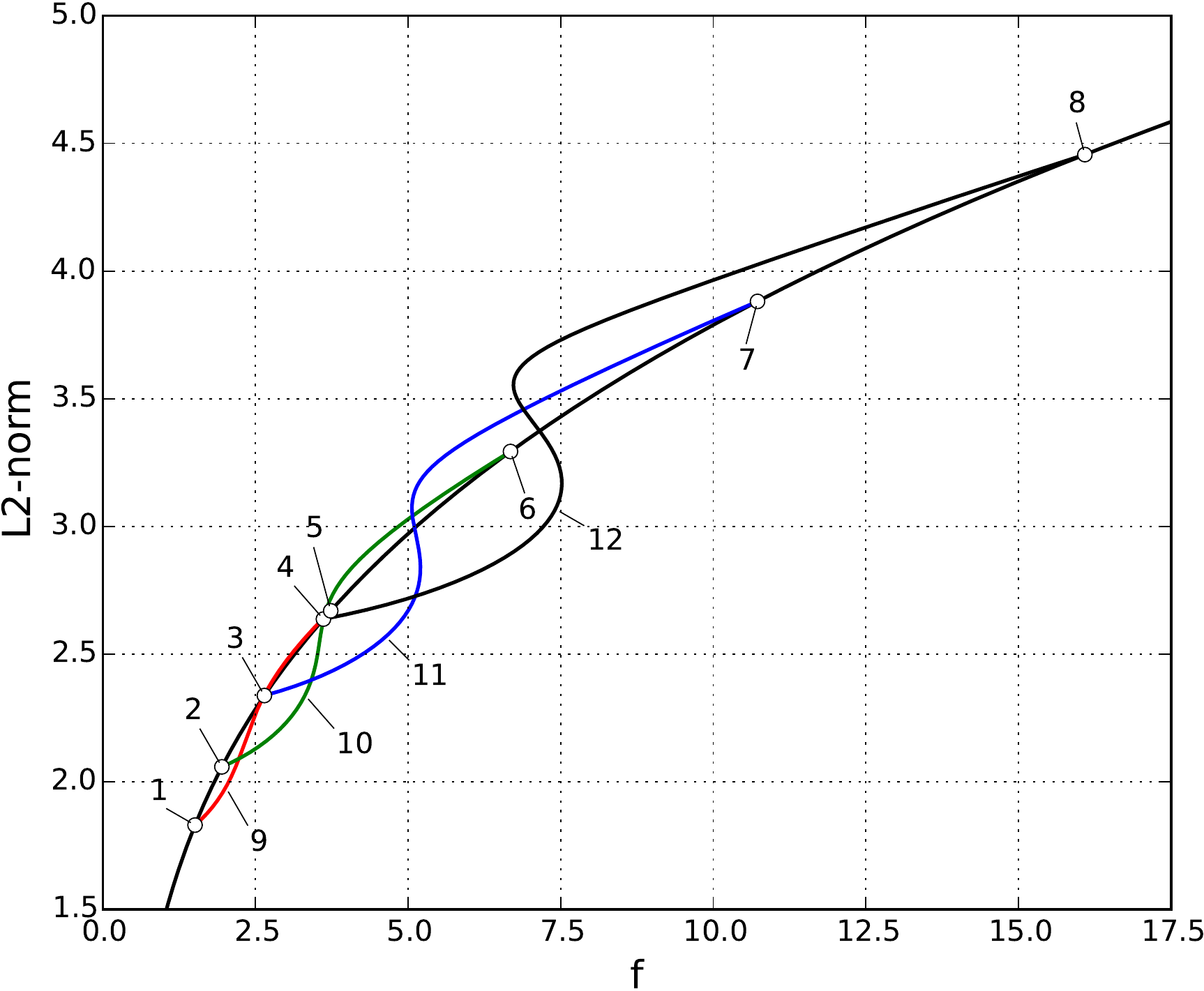}
 \caption{Bifurcation diagram}
\end{minipage} \qquad
\begin{minipage}[c]{0.54\textwidth}
\vspace{-2\baselineskip}
  Four bifurcating branches are depicted in the bifurcation diagram. All branches return to the trivial one.
  Their end points correspond to the same~$k$. 
  The labels 9-12 indicate solutions with $k$ maxima ($k\in\{5,6,7,8\}$) as shown below. They can be found on
  the branches emanating from bifurcation points associated to $k$ via Theorem~\ref{Thm 4 bifurcation f} (or
  Figure~\ref{list_branches_1}).
\end{minipage}
\end{figure}

\vspace{-1.1cm}

\begin{figure}[H]
\begin{subfigure}{0.49\textwidth}
\includegraphics[width=0.502\textwidth]{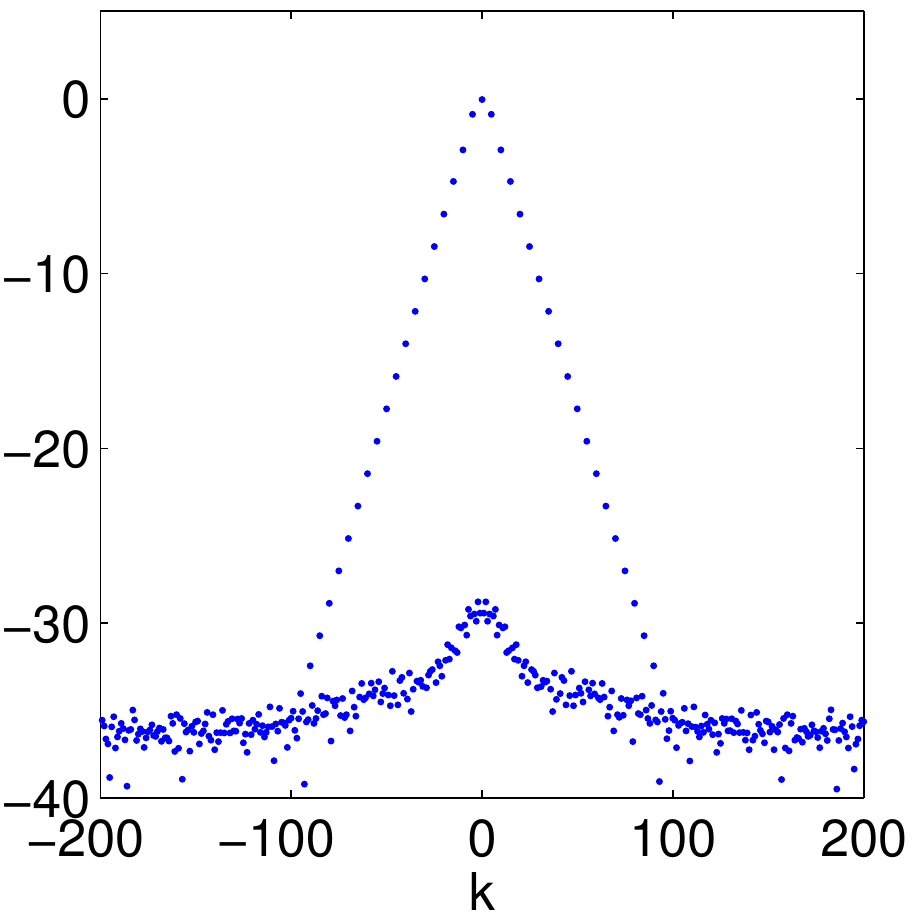}
\includegraphics[width=0.474\textwidth]{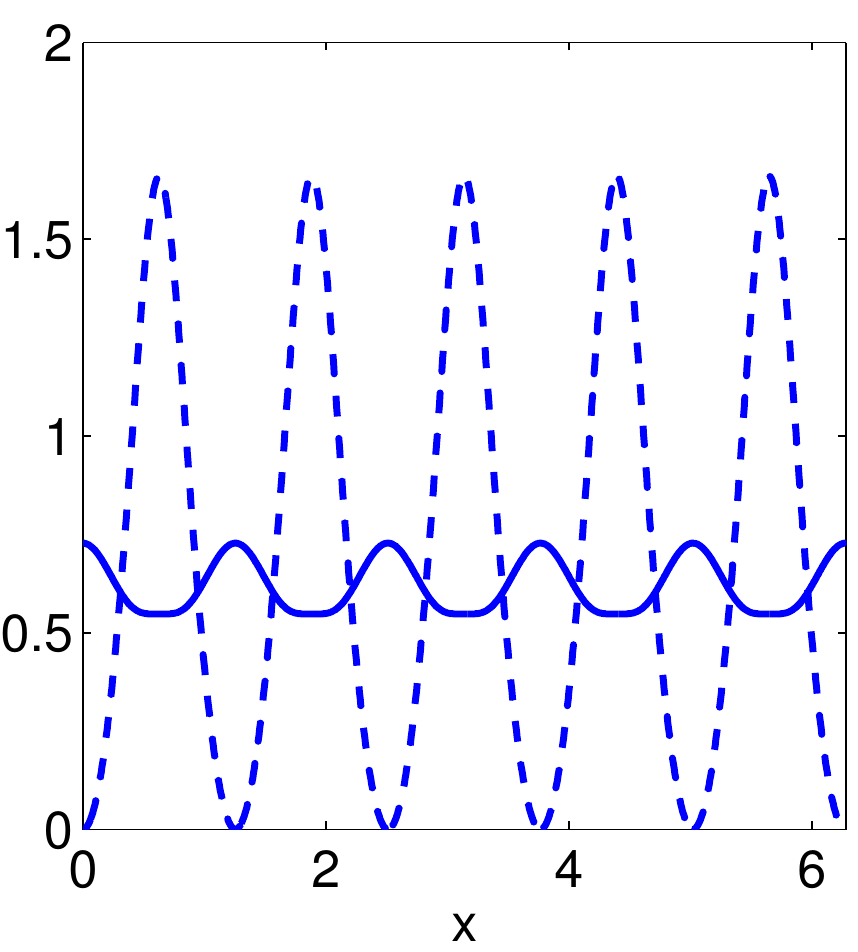}
\subcaption{Label 9: $f=2.01207$}
\end{subfigure}
\begin{subfigure}{0.49\textwidth}
\includegraphics[width=0.502\textwidth]{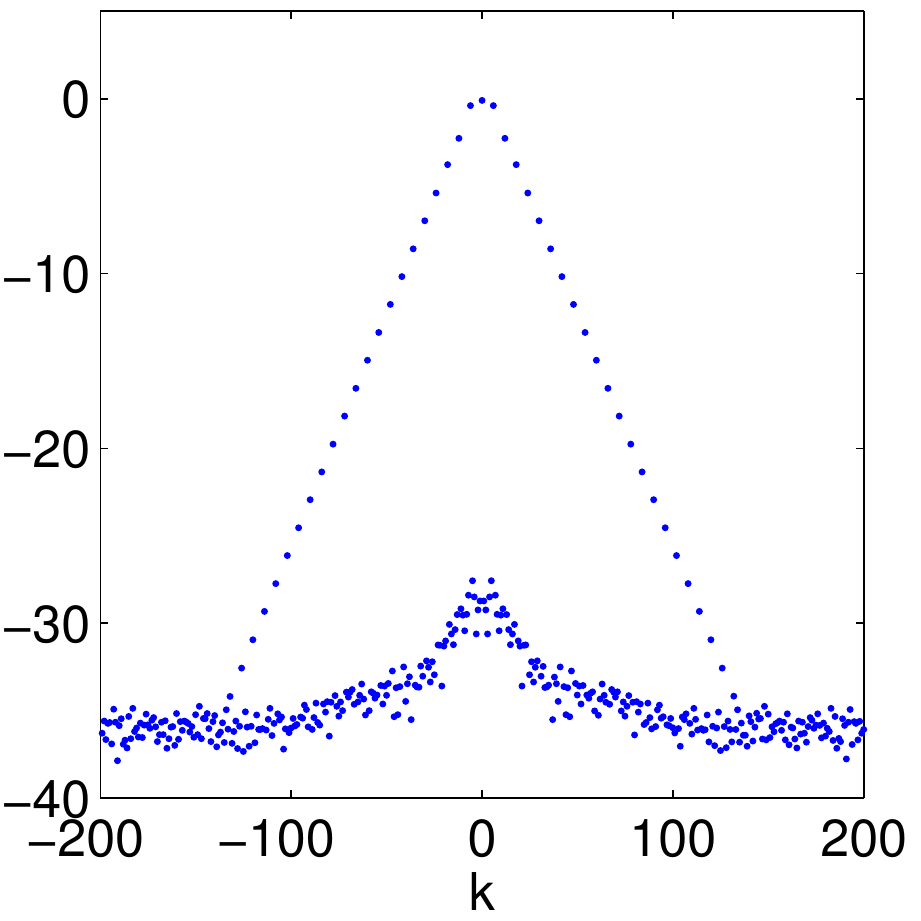}
\includegraphics[width=0.482\textwidth]{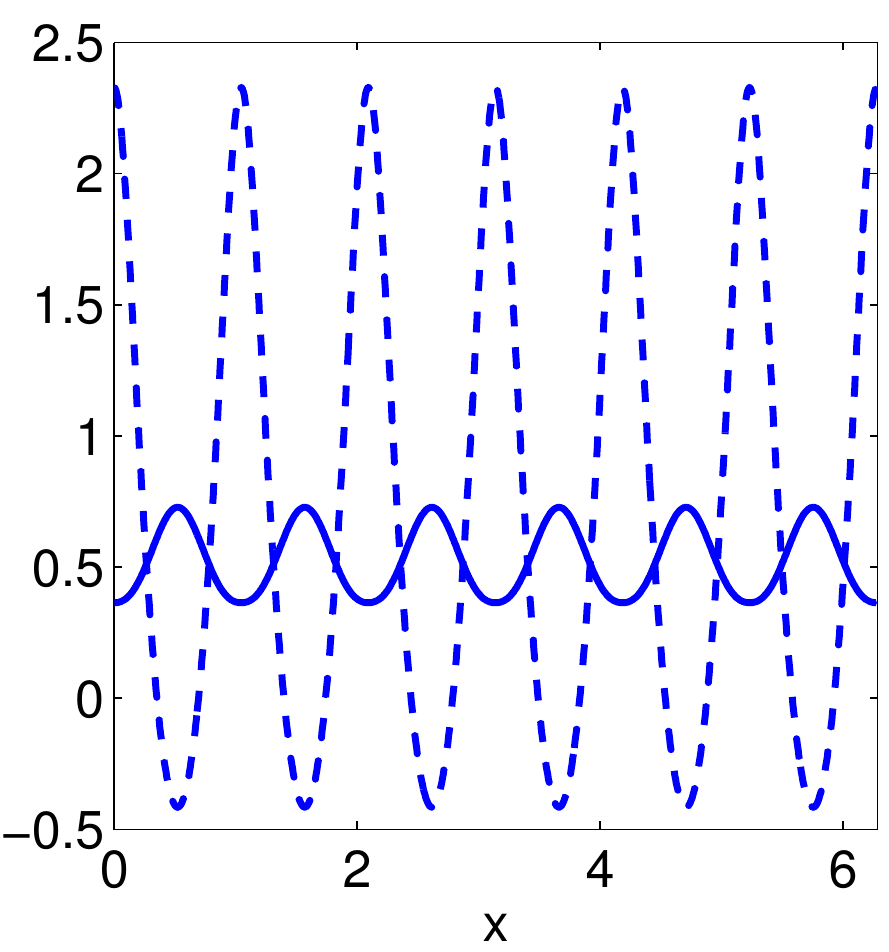}
\subcaption{Label 10: $f=3.31010$}
\end{subfigure}
\begin{subfigure}{.49\textwidth}
\includegraphics[width=0.505\textwidth]{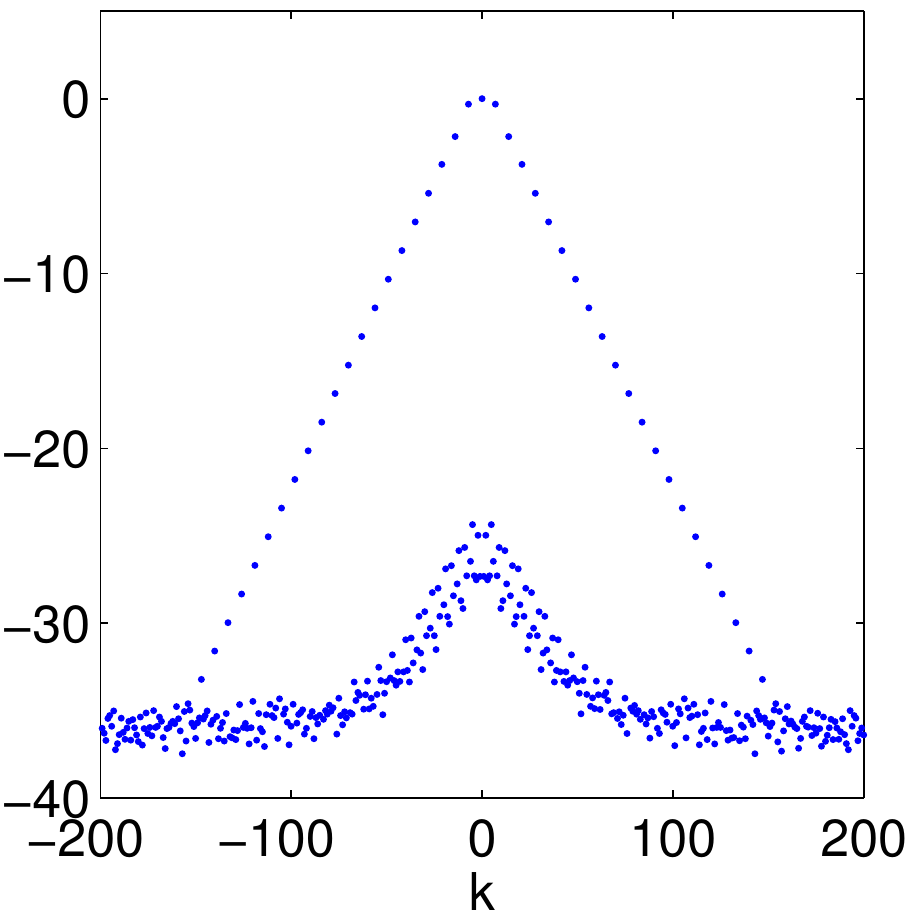}
\includegraphics[width=0.481\textwidth]{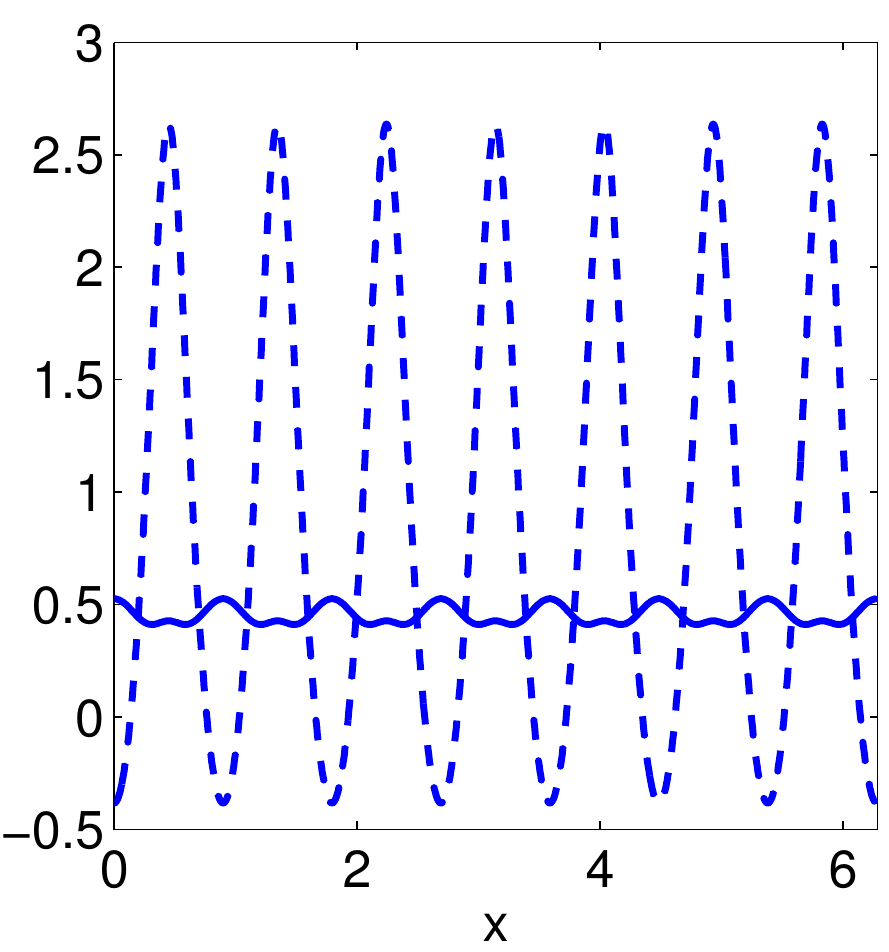}
\subcaption{Label 11: $f=4.63260$}
\end{subfigure}
\begin{subfigure}{.49\textwidth}
\includegraphics[width=0.49\textwidth]{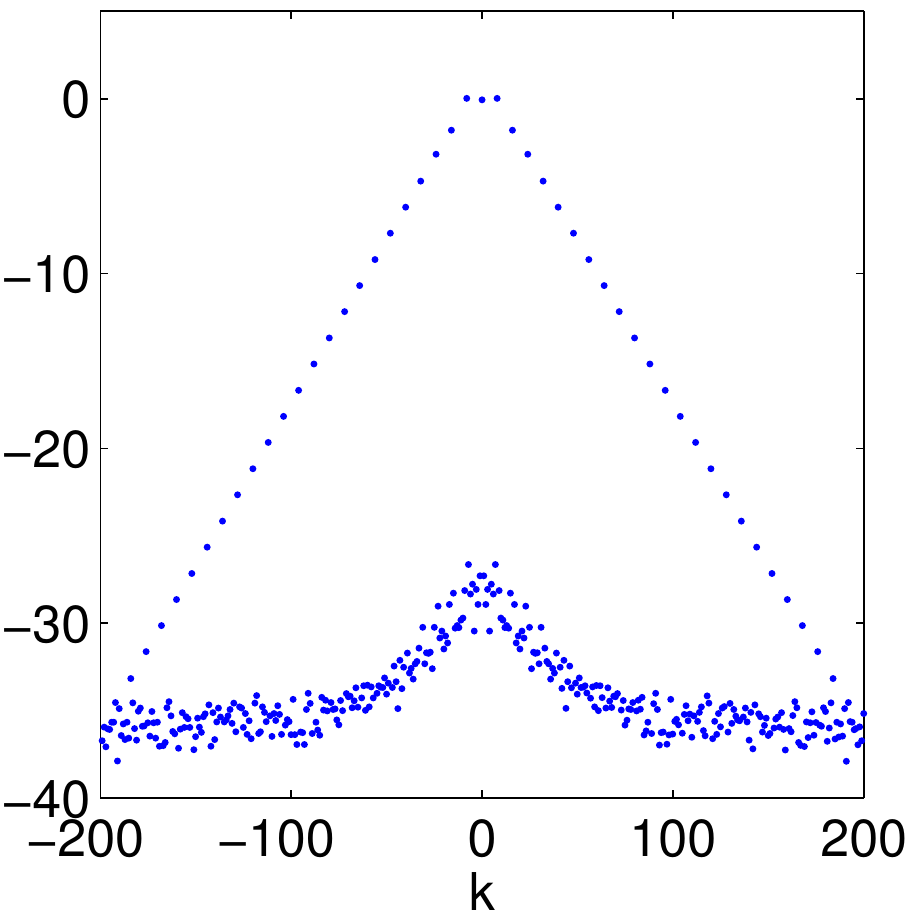}
\includegraphics[width=0.452\textwidth]{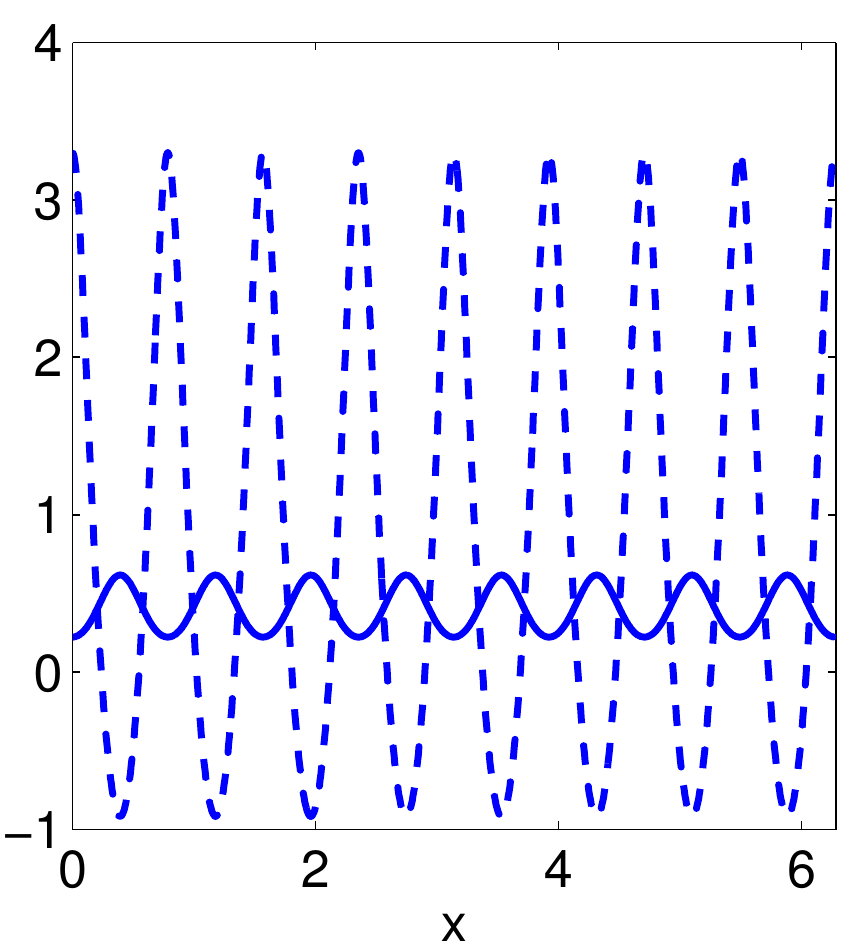}
\subcaption{Label 12: $f=7.41317$}
\end{subfigure}
\caption{Selected solutions. Left: $\log(|\hat a(k)|)$; right: solid $a_1(x)$, dashed $a_2(x)$.}
\label{turing_f_2}
\end{figure} 

\vspace{-0.5cm}




\subsection{The case $\zeta=10,d=-0.2$} \label{zeta=10,d=-0.2}


In this example there are only finitely many (namely 12) bifurcation points on the trivial branch. The
trivial branch has turning points and we find dark solitons. They occur in most pronounced form at the
turning points of two nontrivial branches.

\begin{figure}[H]\label{branches_2}
\centering
\begin{minipage}[c]{0.38\textwidth}
  \vspace{-\baselineskip}
  The table provides all bifurcation points on the trivial branch. The bifurcation
  points at $s$ given by \eqref{Gl bifpoints Thm4} are predicted by Theorem~\ref{Thm 4 bifurcation f} since the conditions (S) and (T) are satisfied.
  The $f-$values found by AUTO are listed as well.  
\end{minipage} \quad
\begin{minipage}[c]{0.58\textwidth}
\tiny
\centering
\begin{tabular}{|r|r|r|c|r|r|c|}
\hline
$k$ & $\sigma$ & $s$ & curve & $f$ AUTO & $f$ Thm~\ref{Thm 4 bifurcation f} &  label \\
\hline
1 & 1 & 1.82156 & magenta & 12.30707 & 12.30707 & ~7\\
1 & -1 & 3.12227 & magenta & 3.21945 & 3.21945 & 12\\
2 & 1 & 1.76678 & brown & 12.28057 & 12.28053 & ~6\\
2 & -1 & 3.02410 & brown & 3.97844 & 3.97844 & 11\\
3 & 1 & 1.67183 & orange & 12.16098  & 12.16097 & ~5 \\
3 & -1 & 2.85277 & orange & 6.02862  & 6.02862 & 10\\
4 & 1 & 1.53017 & blue & 11.81841 & 11.81841 & ~3\\
4 & -1 & 2.59331 & blue & 8.87959  & 8.87959 & ~9\\
5 & 1 & 1.33036 & green & 11.029645 & 11.02958 & ~2\\ 
5 & -1 & 2.21287 & green & 11.50750  & 11.50749 & ~8\\
6 & 1 & 1.06458 & red & 9.499203 & 9.49913 & ~1 \\
6 & -1 & 1.61245 & red & 12.04054  & 12.04060 & ~4 \\
\hline
\end{tabular}
 \label{fig tabular 2nd case}
\caption{Bifurcation points on trivial branch.}
\end{minipage}
\end{figure}
  In the right diagram of Figure~\ref{bif_diagram2_zeta=10} one sees brown and magenta branches which
  bifurcate from the trivial curve at label 11 and 12 and connect to the blue and red branches at points of
  secondary bifurcation.  Solitons were found at turning points, cf. labels 13 and 14. Their shapes
 are shown in Figure~\ref{solitons_f}. 
 
 \vspace{-1em}
 
\begin{figure}[H]
\centering
\includegraphics[width=0.47\textwidth]{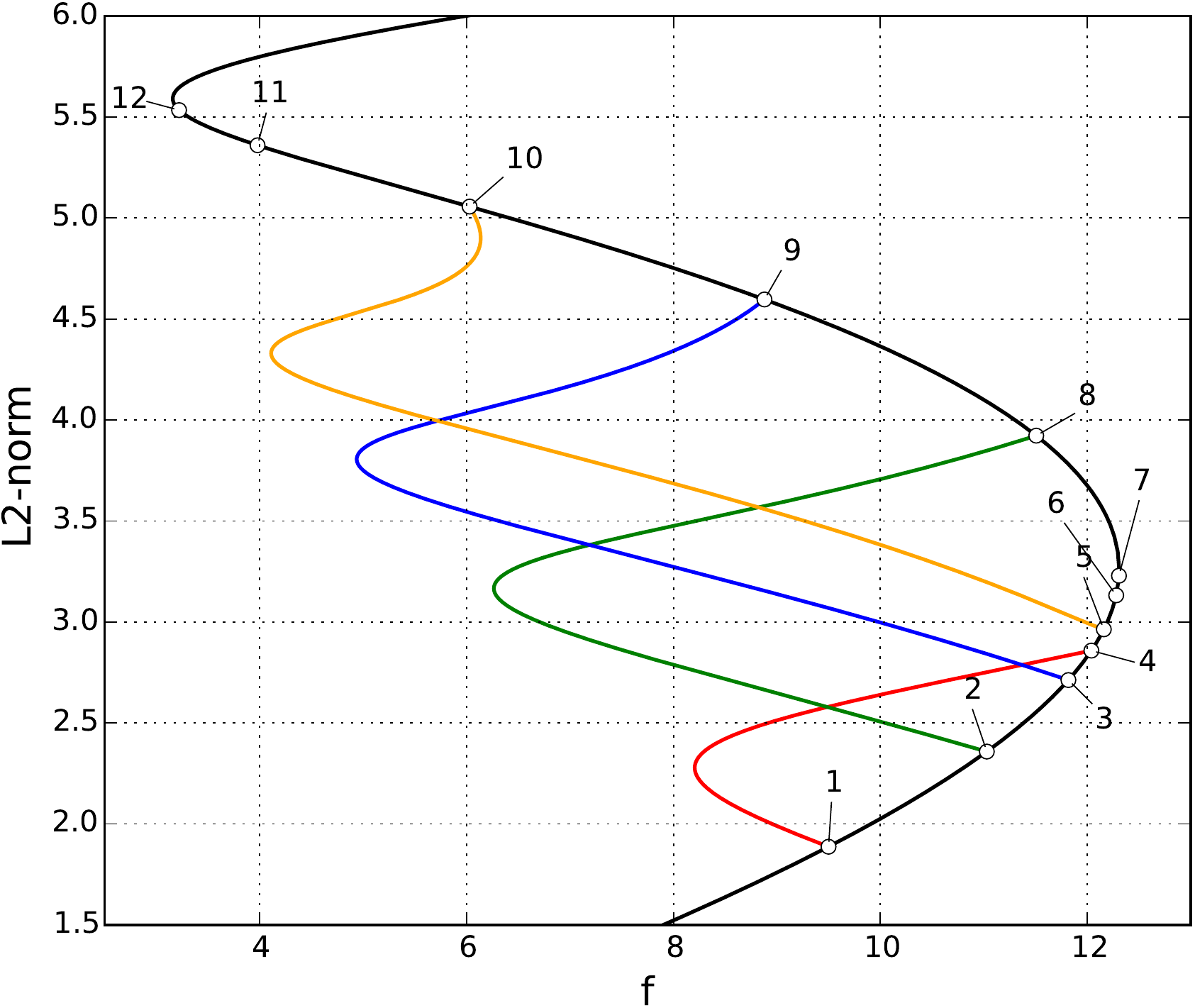}\qquad
 \includegraphics[width=0.47\textwidth]{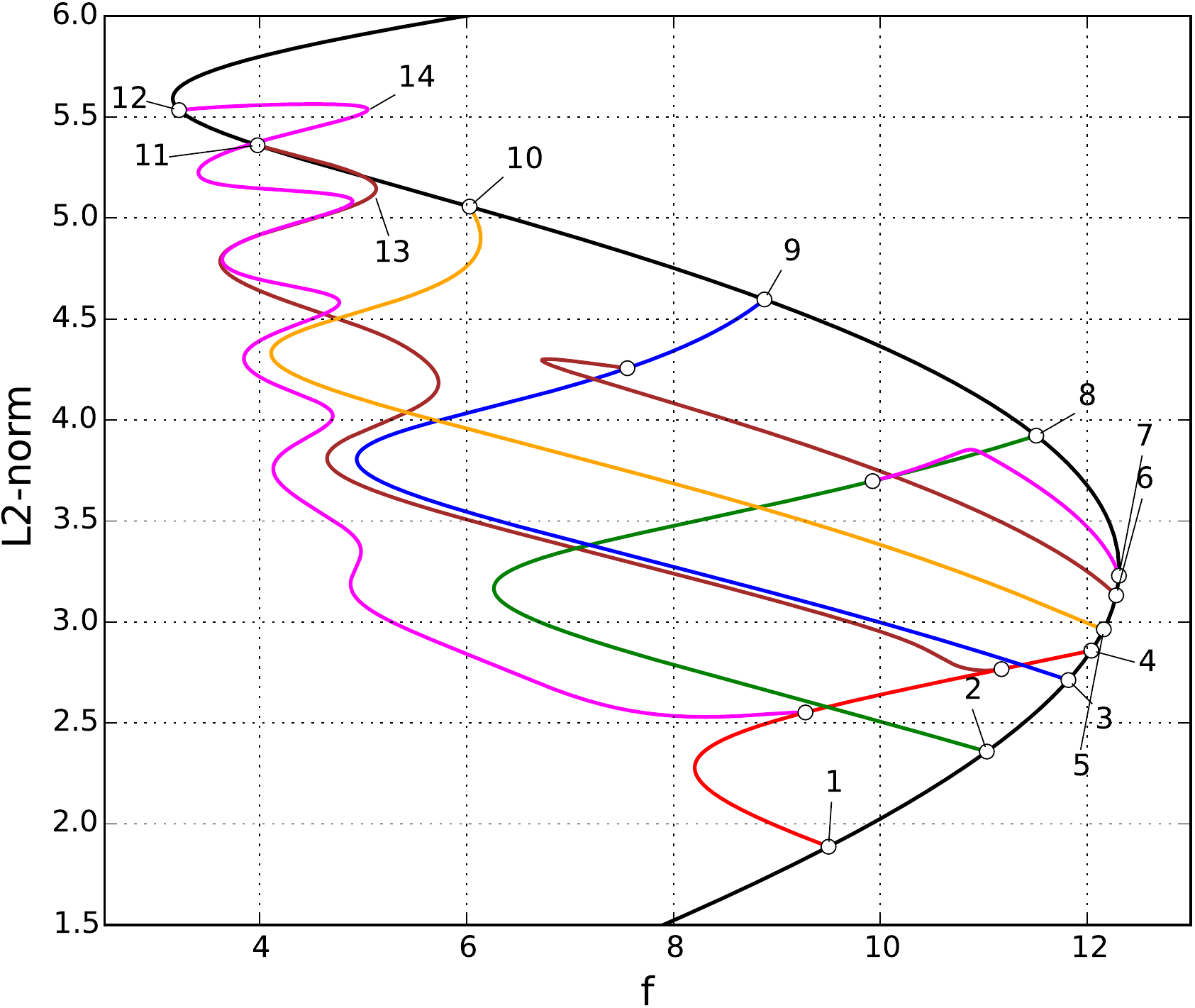}
\caption{Bifurcation diagrams
}
\label{bif_diagram2_zeta=10}
\end{figure}

\vspace{-1.5em}

\begin{figure}[H]
\begin{subfigure}{.49\textwidth}
\includegraphics[width=0.508\textwidth]{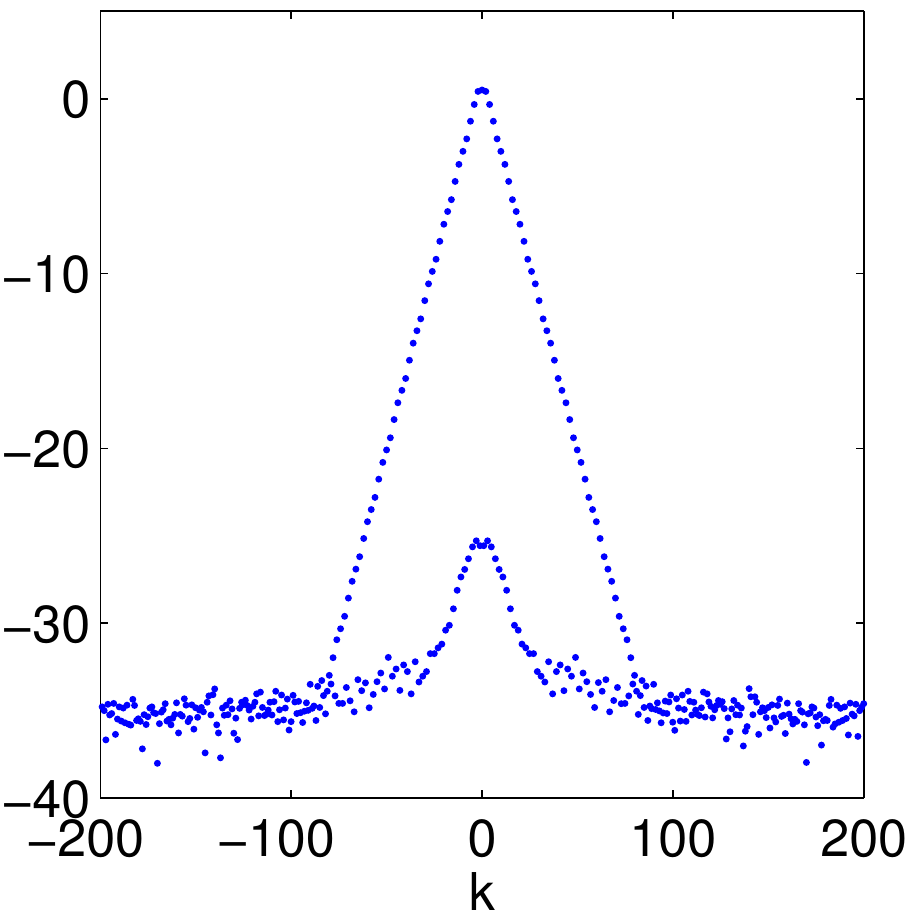}
\includegraphics[width=0.472\textwidth]{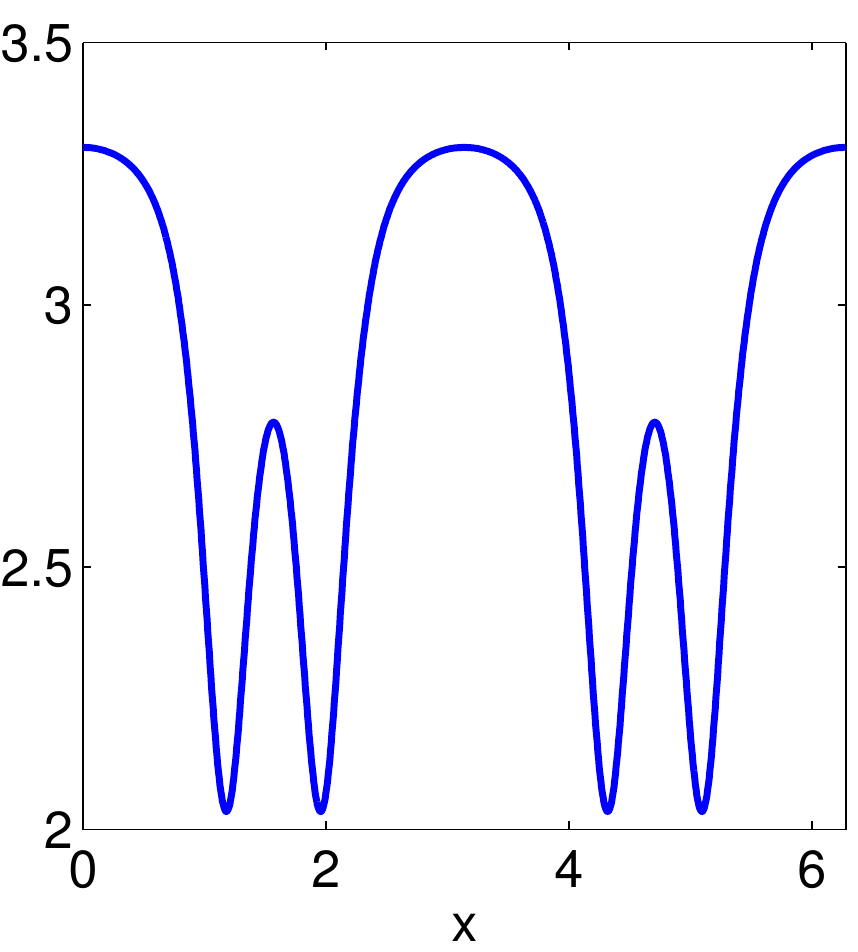}
\subcaption{Label 13: dark 4-soliton at $f=5.10776$}
\end{subfigure}
\begin{subfigure}{.49\textwidth}
\includegraphics[width=0.508\textwidth]{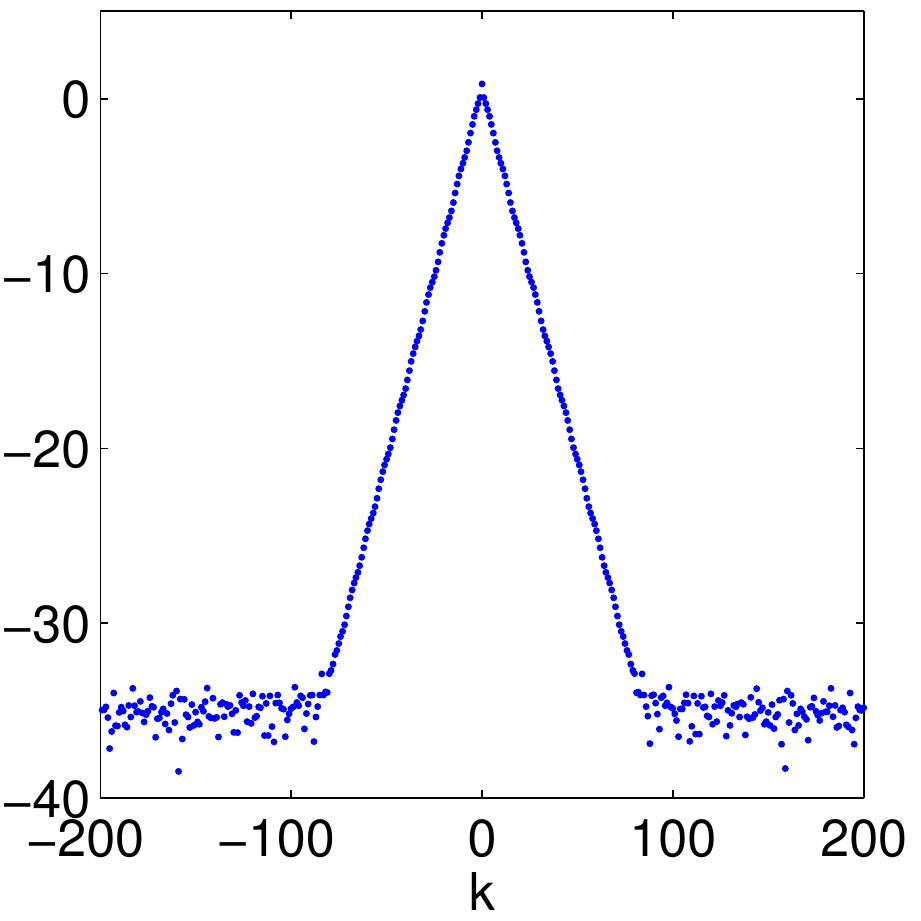}
\includegraphics[width=0.472\textwidth]{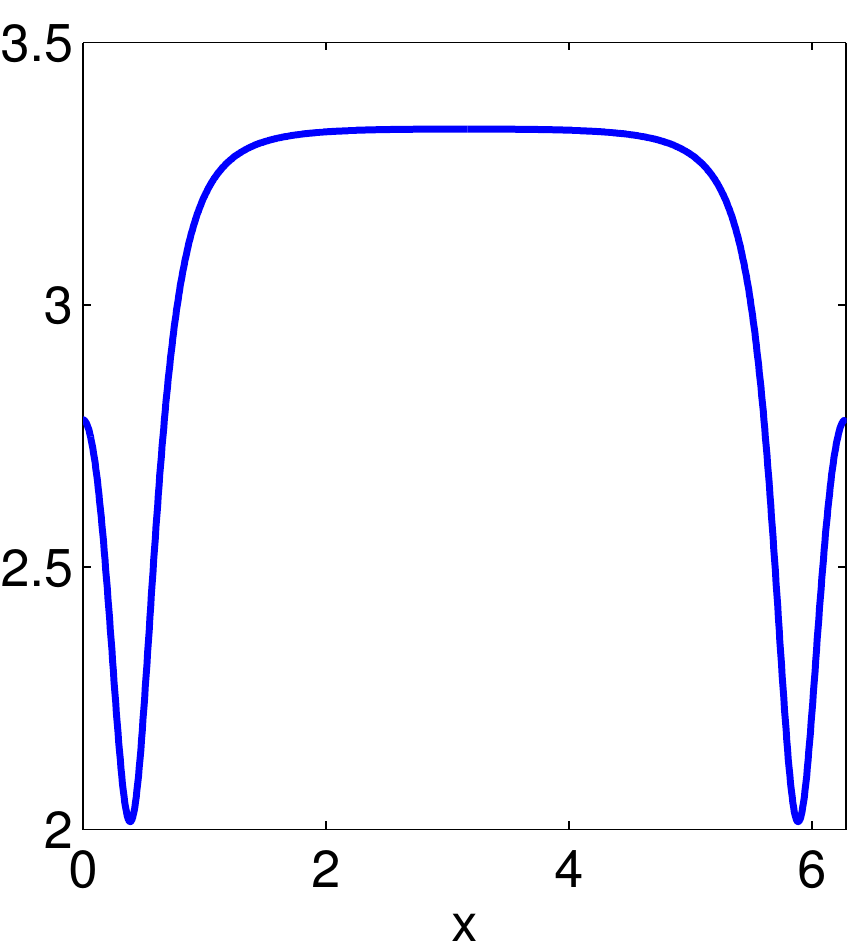}
\subcaption{Label 14: dark 2-soliton at $f=5.02592$}
\end{subfigure}
\caption{Selected solutions. Left: $\log(|\hat a(k)|)$; right: $|a(x)|$.}
\label{solitons_f}
\end{figure}

\vspace{-1.5em}




\subsection{The case $f=1.6,d=0.1$} \label{f=1.6,d=0.1}


This is an example with 14 bifurcations points on the trivial curve and many interesting features. Solitons are found at labels 15, 16, 17, 18 cf. Figures~\ref{many_solitons} and \ref{many_solitons2}.

\vspace{-1.5em}

\begin{figure}[H]\label{branches_3}
\centering
\begin{minipage}[c]{0.33\textwidth}
 \vspace{-\baselineskip}
 The table provides all bifurcation points on the trivial branch. The bifurcation
 points at $\zeta=\hat\zeta(t)$ are predicted by Theorem~\ref{Thm 3 bifurcation zeta} since
 $k,\sigma,t$ solve \eqref{Gl bifpoints Thm3} and the conditions (S) and (T) are satisfied.
 The $\zeta-$values found by AUTO are listed as well.
\end{minipage} \qquad
\begin{minipage}[c]{0.6\textwidth}
\tiny
\centering
\begin{tabular}{|c|c|c|c|c|c|c|}
\hline
$k$ & $\sigma$ & $t$ & curve & $\zeta$ AUTO & $\zeta$ Thm~\ref{Thm 3 bifurcation zeta} &  label \\
\hline
1 & ~1 & ~0.10528 & magenta & ~2.63750 & ~2.63750 & ~8 \\
1 & -1 & ~0.77130 & magenta & ~2.24888 & ~2.24888 & 14\\
2 & ~1 & -0.18543 & orange & ~2.28327 & ~2.28327 & ~6 \\
2 & -1 & ~0.75556 & orange &~ 2.25196 & ~2.25196 & 13 \\
3 & ~1 & -0.52046 & brown & ~1.25701  & ~1.25702 & ~4\\
3 & -1 & ~0.72127 & brown & ~2.26952 &  ~2.26952 &   12 \\
4 & ~1 & -0.72866 & green & ~0.13681 & ~0.13682 & ~2\\ 
4 & -1 & ~0.66089 & green & ~2.32248 & ~2.32248 & 11\\
5 & -1 & -0.77281 & red & -0.18665 & -0.18666 & ~1\\
5 & -1 & ~0.56321 & red & ~2.42958 & ~2.42954 & 10\\
6 & -1 & -0.61695 & blue & ~0.80168 & ~0.80166 & ~3\\
6 & -1 & ~0.40312 & blue & ~2.58451 & ~2.58449 & ~9 \\
7 & -1 & -0.20600 & pink & ~2.24085 & ~2.24085 & ~5\\
7 & -1 & ~0.01535 & pink & ~2.57474 & ~2.57475 & ~7\\
\hline
\end{tabular}
\caption{Bifurcation points on trivial branch.}
\label{list_branches_3}
\end{minipage}
\end{figure}

\vspace{-1.5em}

\begin{figure}[H]
\begin{minipage}[c]{0.55\textwidth}
\includegraphics[width=\textwidth]{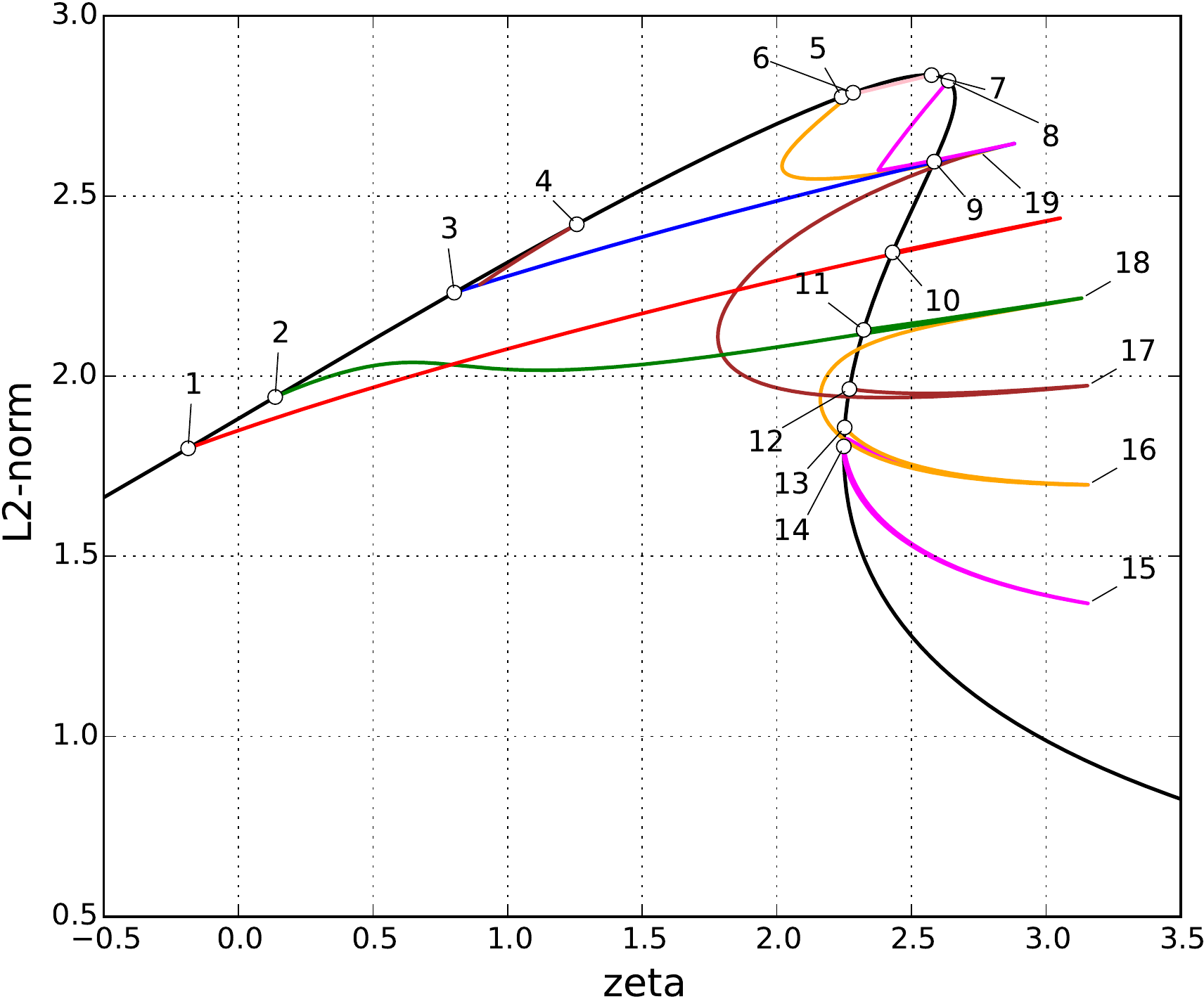}
\caption{Bifurcation diagram}
\label{bif_diagram_f=1.6}
\end{minipage} \qquad
\begin{minipage}[c]{0.3\textwidth}
  \vspace{-2\baselineskip} 
  The brown branches, the magenta branch starting at label 8 and the orange branch starting at label 6 all
  enter the blue one. The orange branch going off label 12 enters the green one. A period-doubling bifurcation occurs when the brown branch going off label 4 meets the blue branch. 
\end{minipage}
\end{figure}

\vspace{-1.5em}

\begin{figure}[H]
\begin{minipage}[c]{0.45\textwidth}
\hspace{4em}\includegraphics[width=.5\textwidth]{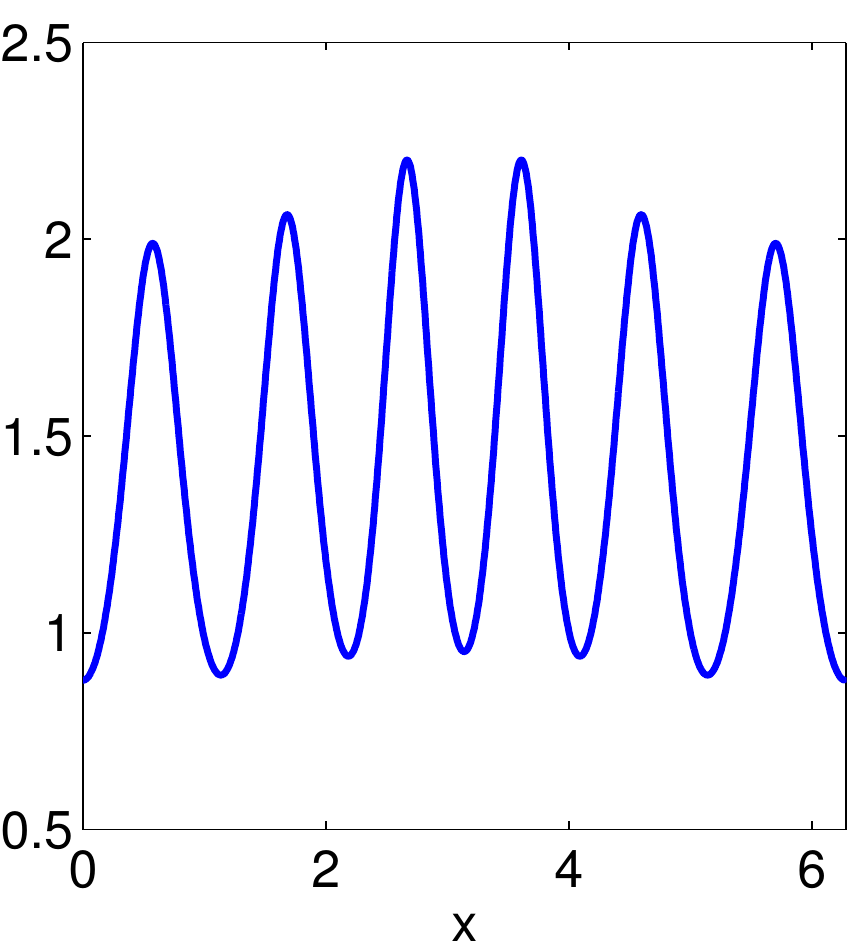}
\caption{Label 19 at $\zeta=2.75156$}
\end{minipage} 
\begin{minipage}[c]{0.5\textwidth}
  \vspace{-2\baselineskip} 
  At label 19 on the magenta branch we find a $2\pi$-periodic solution with 6 maxima which is not
  $2\pi/6$-periodic. In contrast, all solutions on the blue branch are $2\pi/6$-periodic. The explanation is,
  that the $2\pi/6$-periodicity is lost along the magenta branch which joins the blue one in a secondary
  bifurcation.
\end{minipage}
\end{figure}

At the labels~15,~16,~17,~18 bright 1-, 2-, 3-, 4-solitons are found, respectively. 
They lie on branches emanating from bifurcation points associated to $k=1, 2, 3, 4$, respectively.   
\begin{figure}[H]
\begin{subfigure}[b]{.49\textwidth}
\includegraphics[width=.505\textwidth]{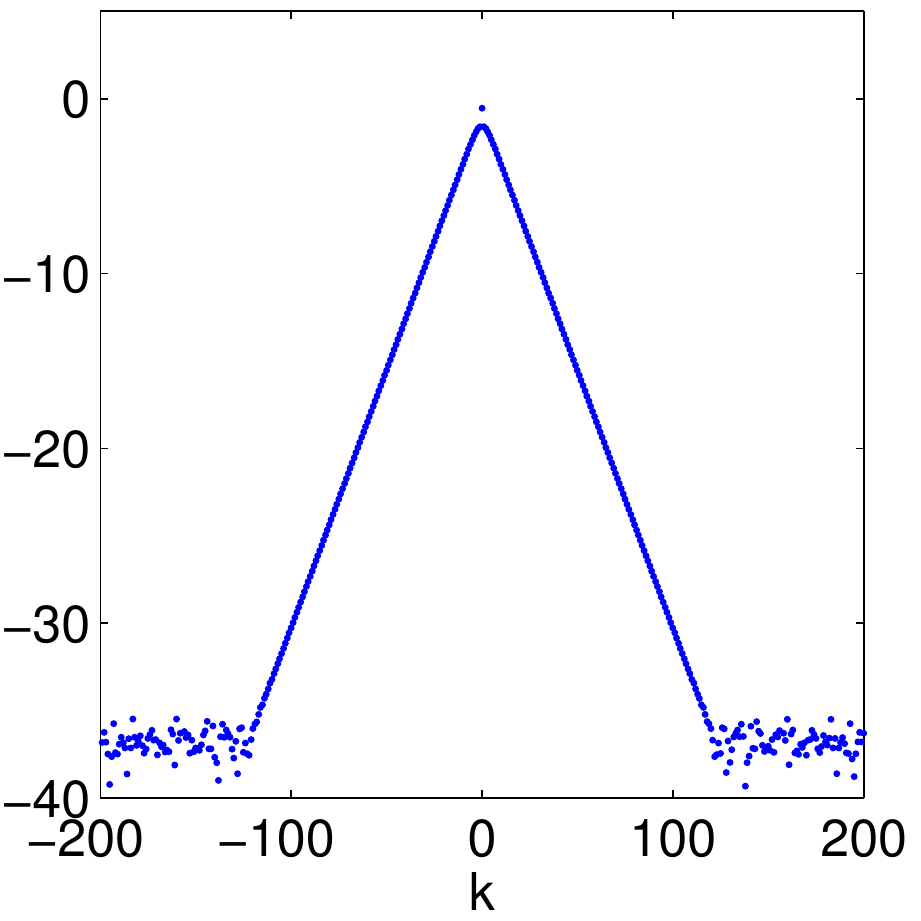}
\includegraphics[width=.475\textwidth]{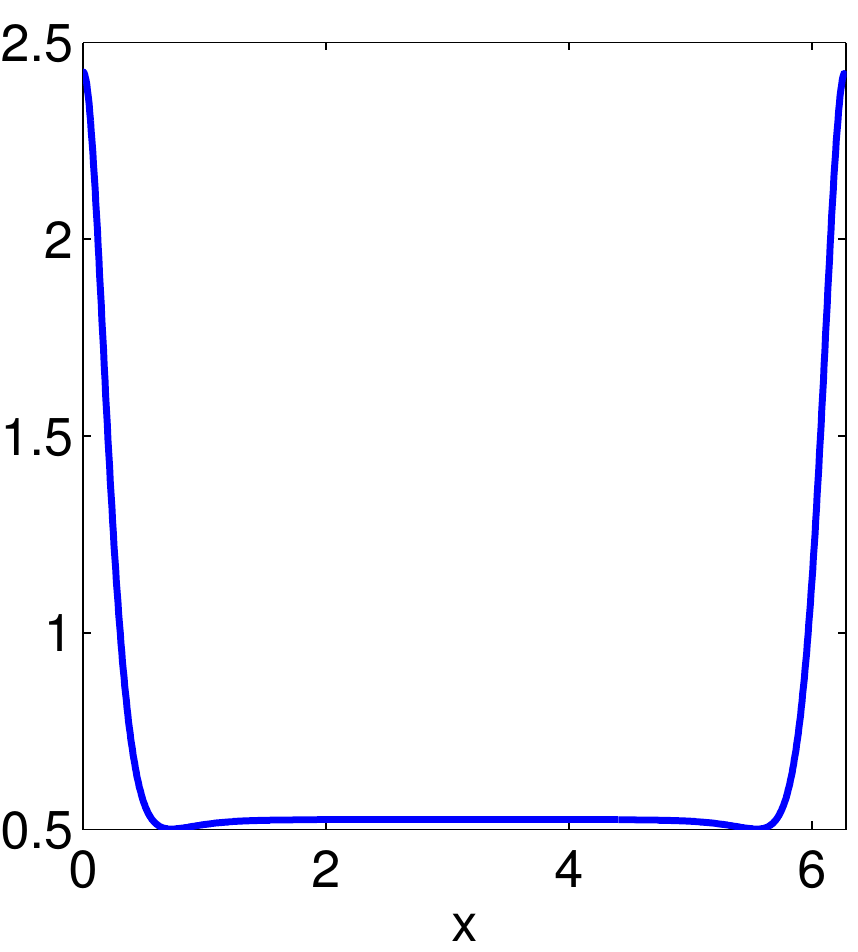}
\subcaption{Label 15: $\zeta=3.15568$}
\end{subfigure}
\begin{subfigure}[b]{.49\textwidth}
\includegraphics[width=.505\textwidth]{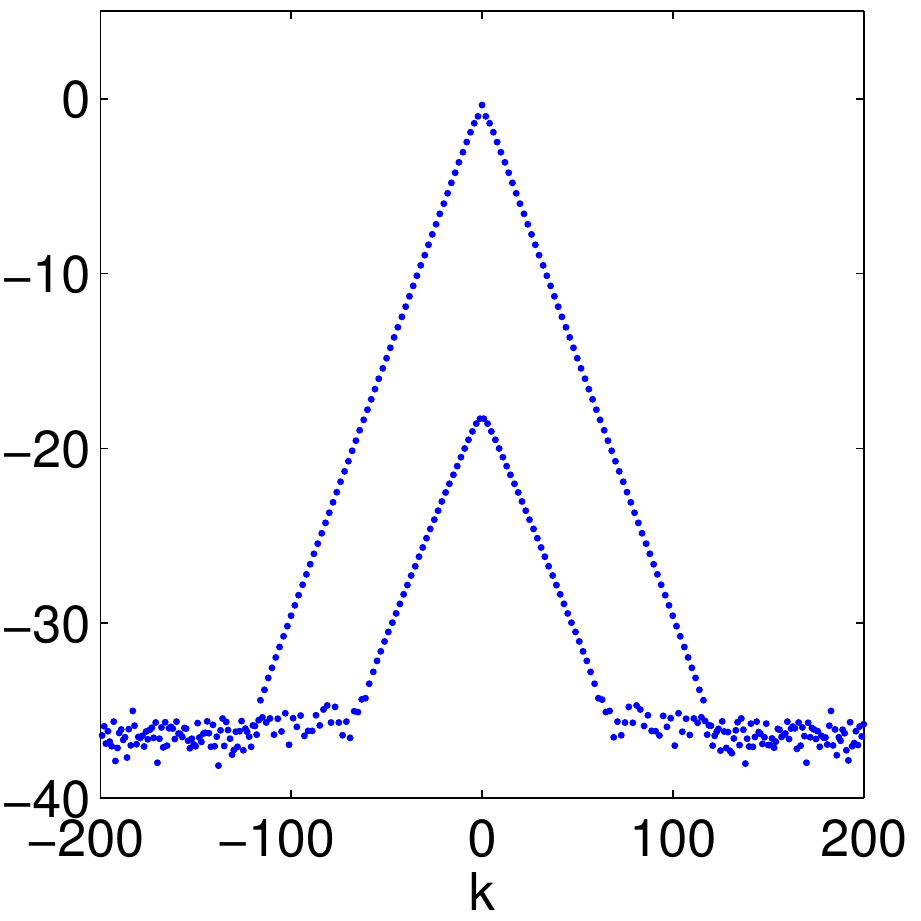}
\includegraphics[width=.475\textwidth]{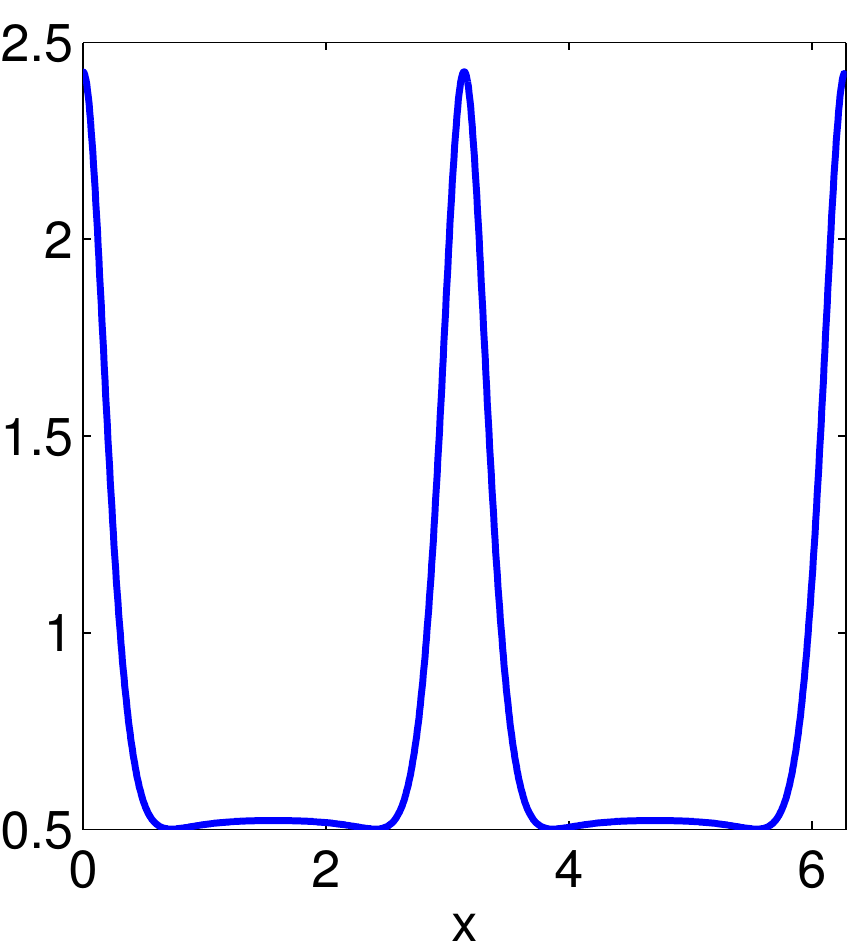}
\subcaption{Label 16: $\zeta=3.15568$}
\end{subfigure}
\caption{Selected solutions. Left: $\log(|\hat a(k)|)$; right: solid $|a(x)|$.}
\label{many_solitons}
\end{figure}

\begin{figure}[H]
\begin{subfigure}[b]{0.49\textwidth}
\includegraphics[width=.505\textwidth]{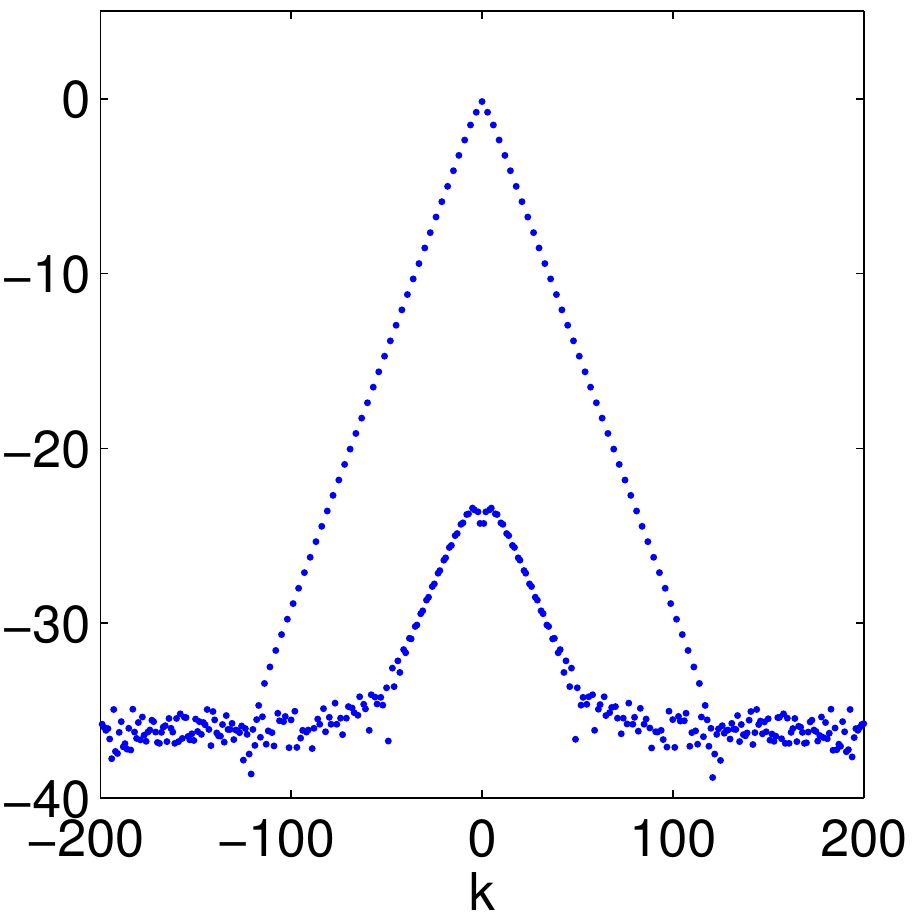}
\includegraphics[width=.475\textwidth]{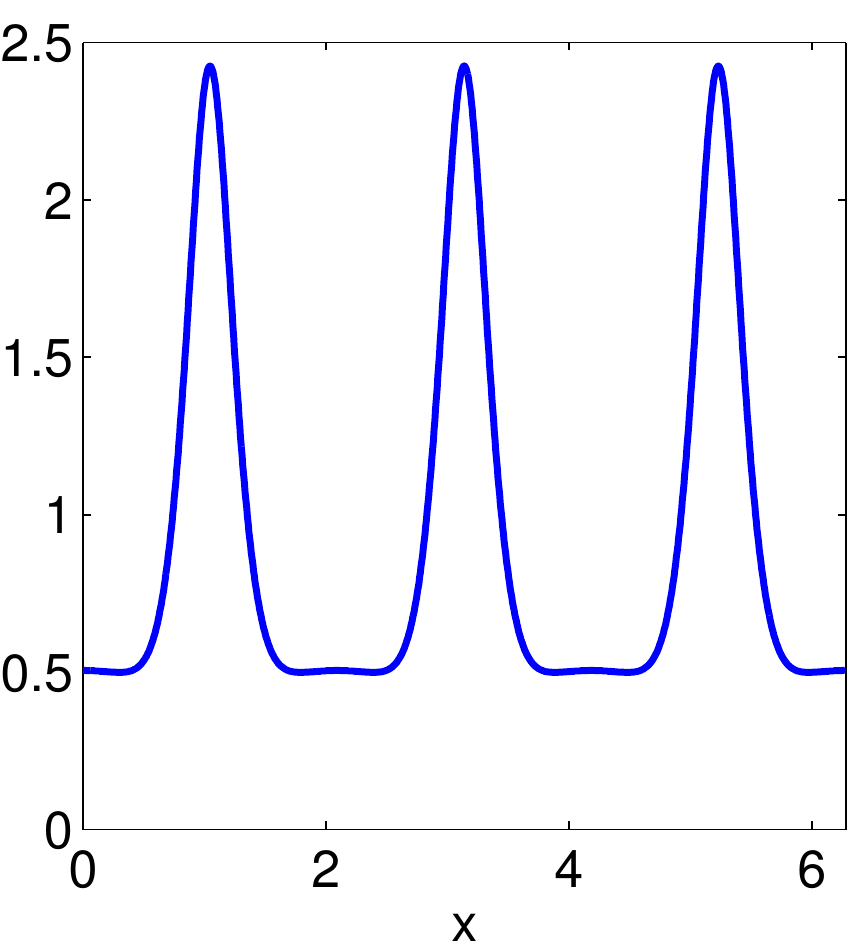}
\subcaption{Label 17: $\zeta=3.15392$}
\end{subfigure}
\begin{subfigure}[b]{0.49\textwidth}
\includegraphics[width=.505\textwidth]{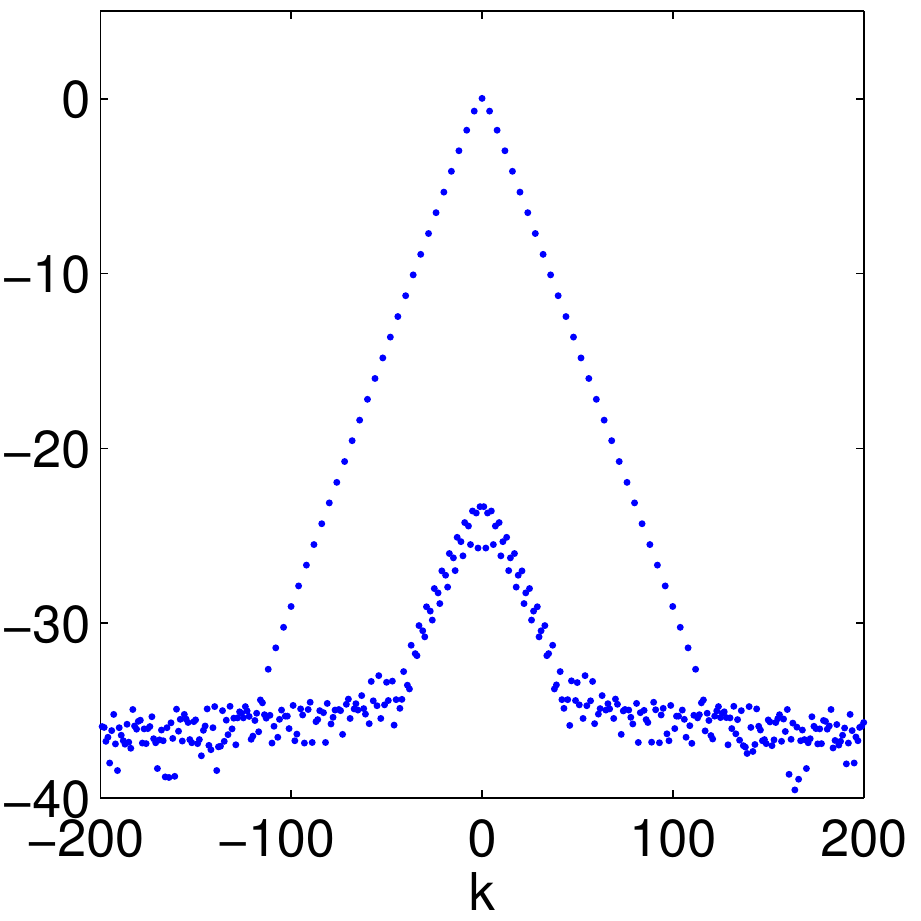}
\includegraphics[width=.475\textwidth]{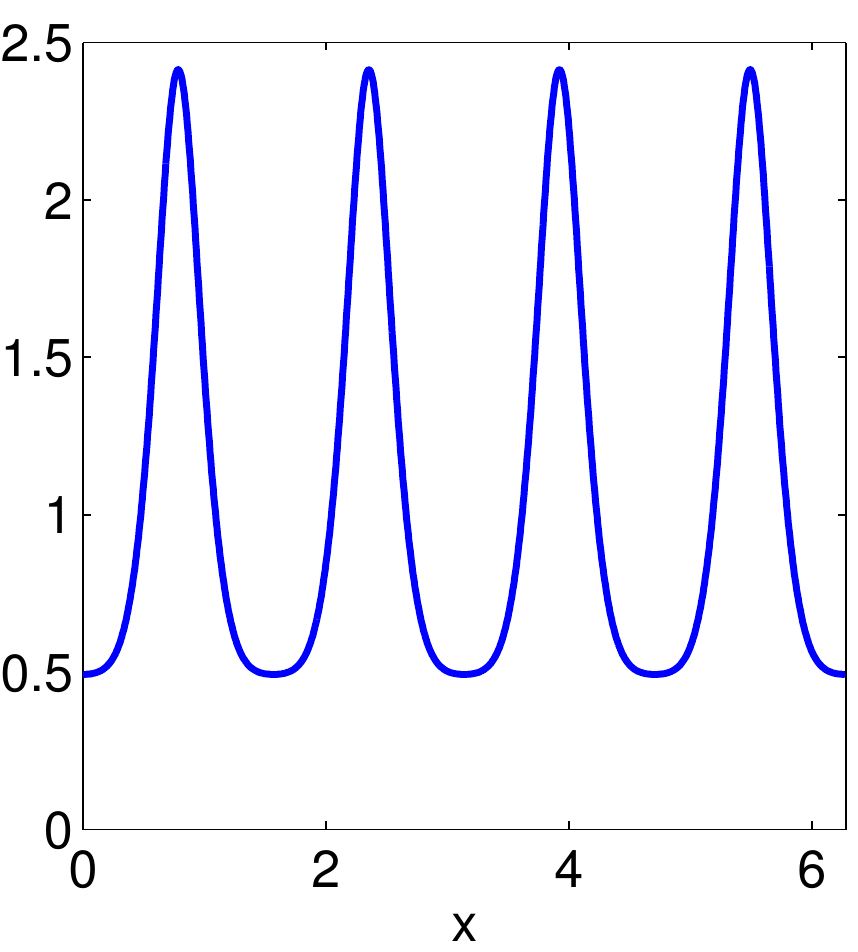}
\subcaption{Label 18: $\zeta=3.13217$}
\end{subfigure}
\caption{Selected solutions. Left: $\log(|\hat a(k)|)$; right: solid $|a(x)|$.}
\label{many_solitons2}
\end{figure}





\subsection{The case $f=2,d=-0.1$} \label{f=2,d=-0.1}


 This is again an example with finitely many bifurcation points and dark solitions like in Setion~\ref{zeta=10,d=-0.2}. Also here the dark solitons occur in most pronounced form at turning points of nontrivial branches.

\begin{figure}[H]\label{branches_4}
\begin{minipage}[c]{0.41\textwidth}
 The table provides all bifurcation points on the trivial branch. The bifurcation
 points at $\zeta=\hat\zeta(t)$ are predicted by Theorem~\ref{Thm 3 bifurcation zeta} since
 $k,\sigma,t$ solve \eqref{Gl bifpoints Thm3} and the conditions (S) and (T) are satisfied.
 The $\zeta-$values found by AUTO are listed as well.  
\end{minipage} \quad
\begin{minipage}[c]{0.55\textwidth}
\tiny
\centering 
\begin{tabular}{|r|r|c|c|r|r|c|}
\hline
$k$ & $\sigma$ & $t$ & curve & $\zeta$ AUTO & $\zeta$ Thm~\ref{Thm 3 bifurcation zeta} &  label \\
\hline
1 & -1 & 0.85260 & red & 2.72386 & 2.72386 & 4 \\
1 & 1 & 0.22806 & red & 4.02617 & 4.02619 & 1\\
2 & -1 & 0.86118 & blue & 2.72771 & 2.72771 & 5\\
2 & 1 & 0.49553 & blue & 3.58829 & 3.58830 & 2\\
3 & 1 & 0.86262 & green & 2.72883 &  2.72883 & 6\\
3 & 1 & 0.78647 & green  & 2.79924 &  2.79924 & 3 \\
\hline
\end{tabular}
\caption{Bifurcation points on trivial branch.}
\end{minipage} 
\end{figure}

\begin{figure}[H]
\begin{subfigure}{0.49\textwidth}
\includegraphics[width=\textwidth]{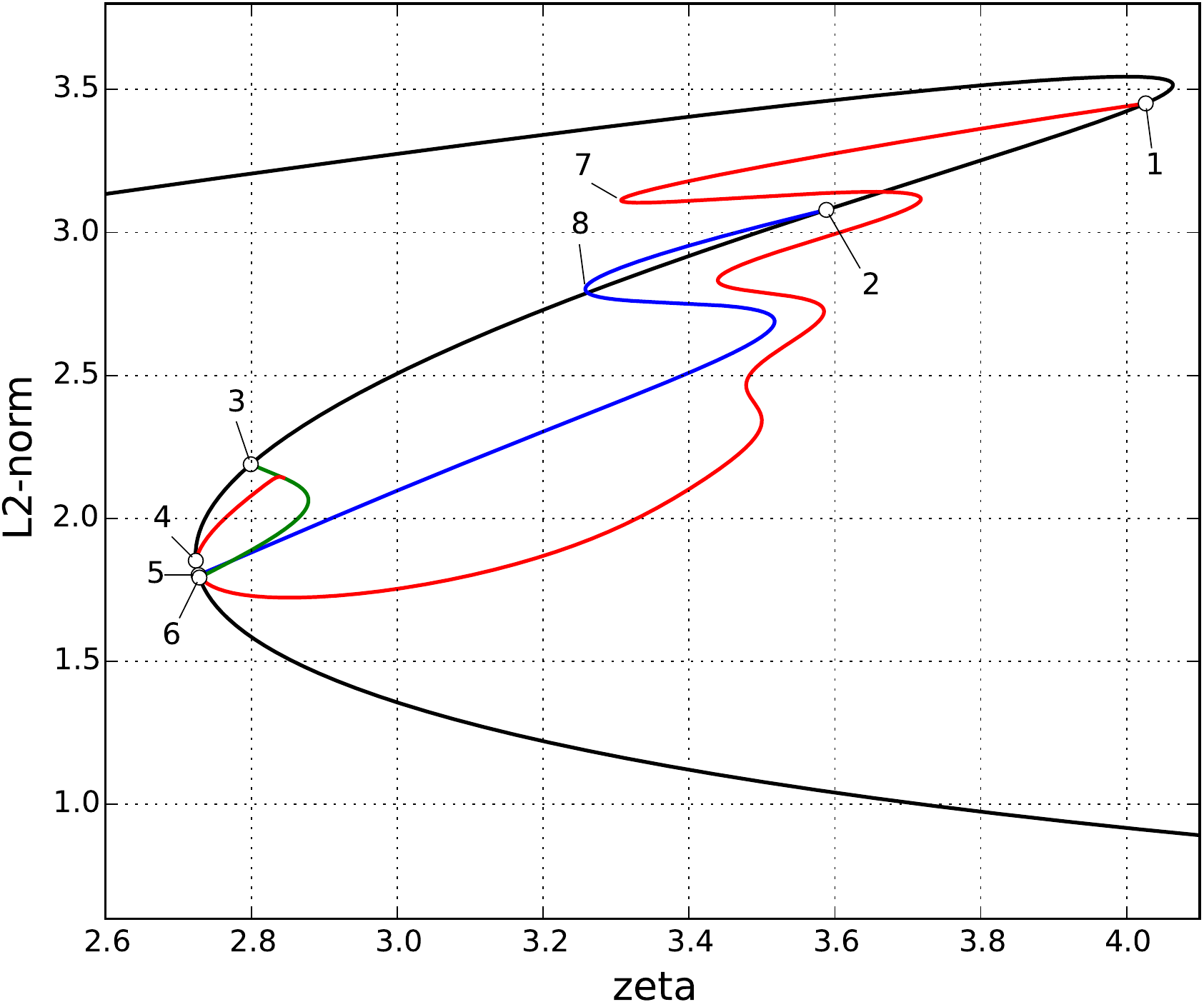}
\caption{primary bifurcations}
\end{subfigure}
\begin{subfigure}{0.49\textwidth}
\includegraphics[width=\textwidth]{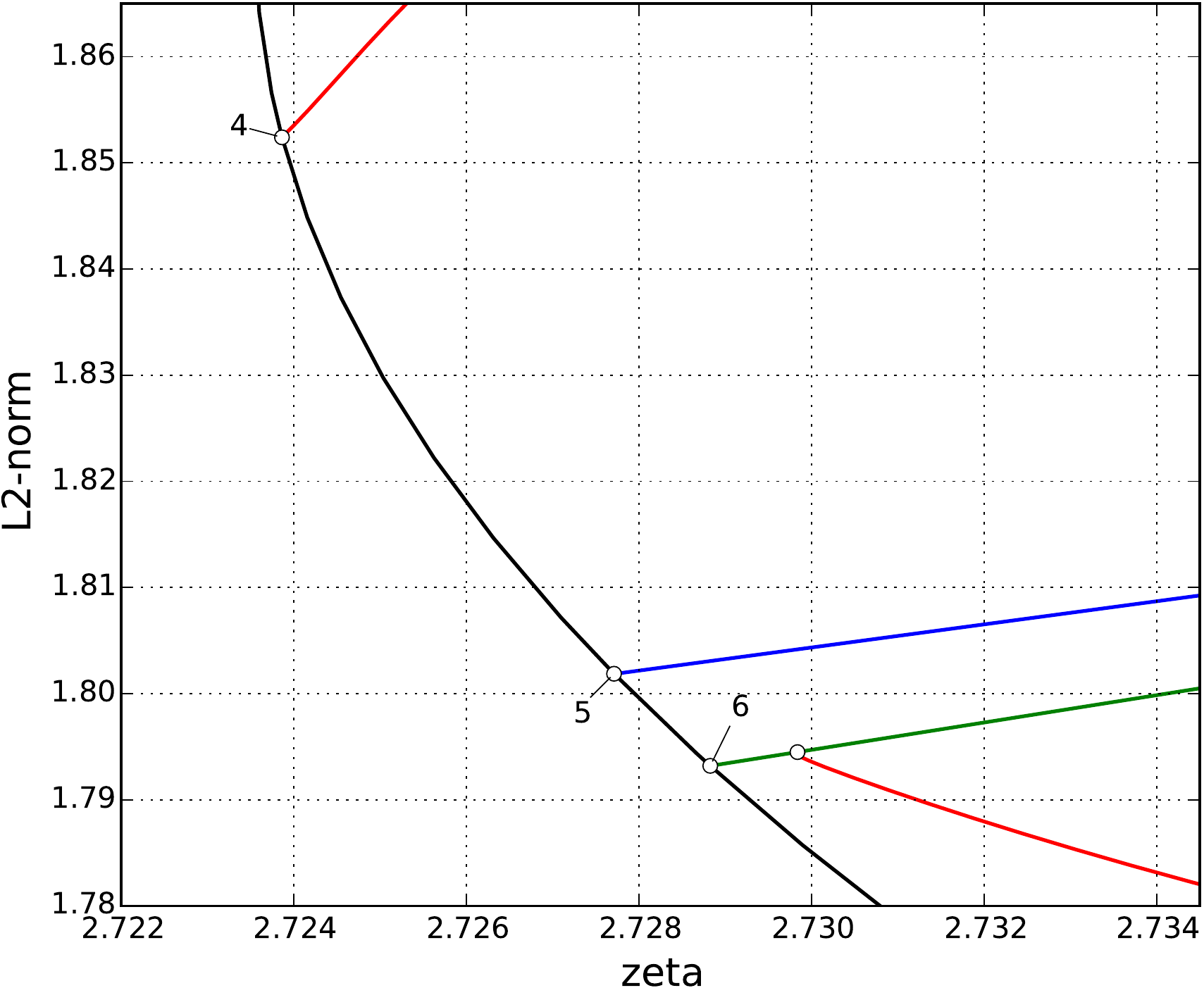} 
\caption{zoom near bifurcation points 5 and 6}
\end{subfigure}
 \caption{Bifurcation diagram}
 \label{bif_diagram_f=2}
\end{figure}


\begin{minipage}[c]{0.9\textwidth}
  Both red branches $(k=1)$ enter the green one $(k=3$) so that the trivial solutions
  at labels 1, 3, 4, 6 are connected to the dark 1-soliton at label 7 through nontrivial solutions.
  Similarly, the blue branch ($k=2$) connects the trivial solutions at labels 2, 5 with the dark 2-soliton at
  label 8.
\end{minipage}

\begin{figure}[H]
\begin{subfigure}{0.48\textwidth}
\includegraphics[width=0.508\textwidth]{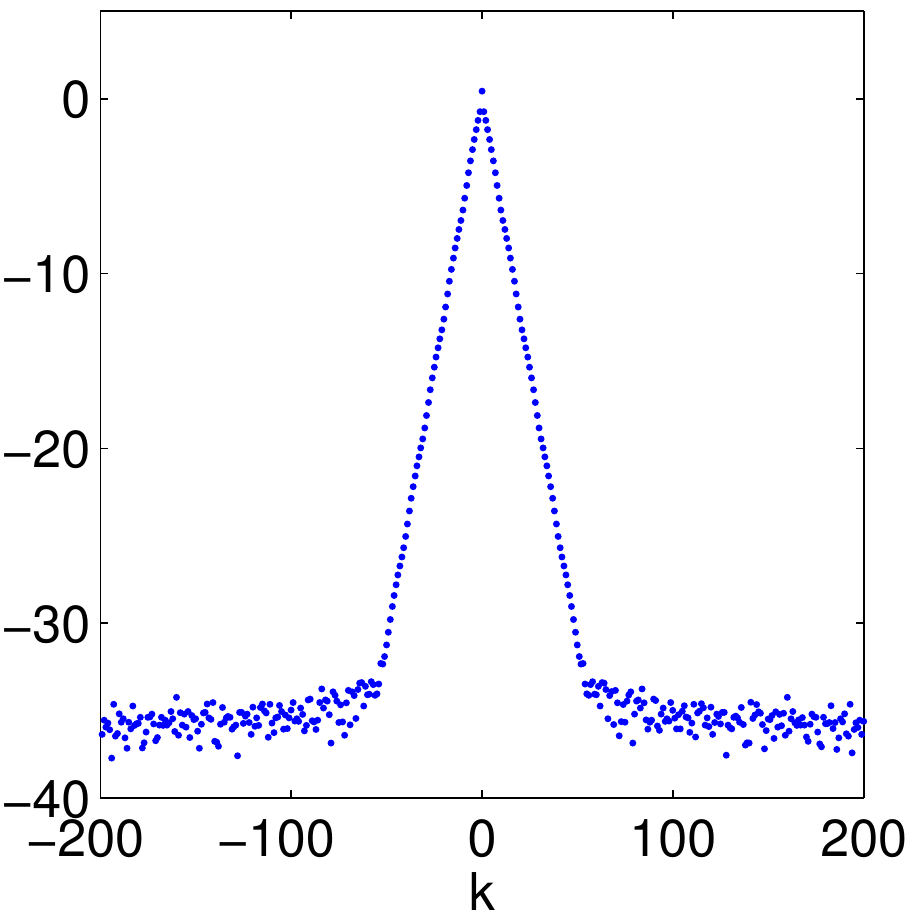}
\includegraphics[width=0.472\textwidth]{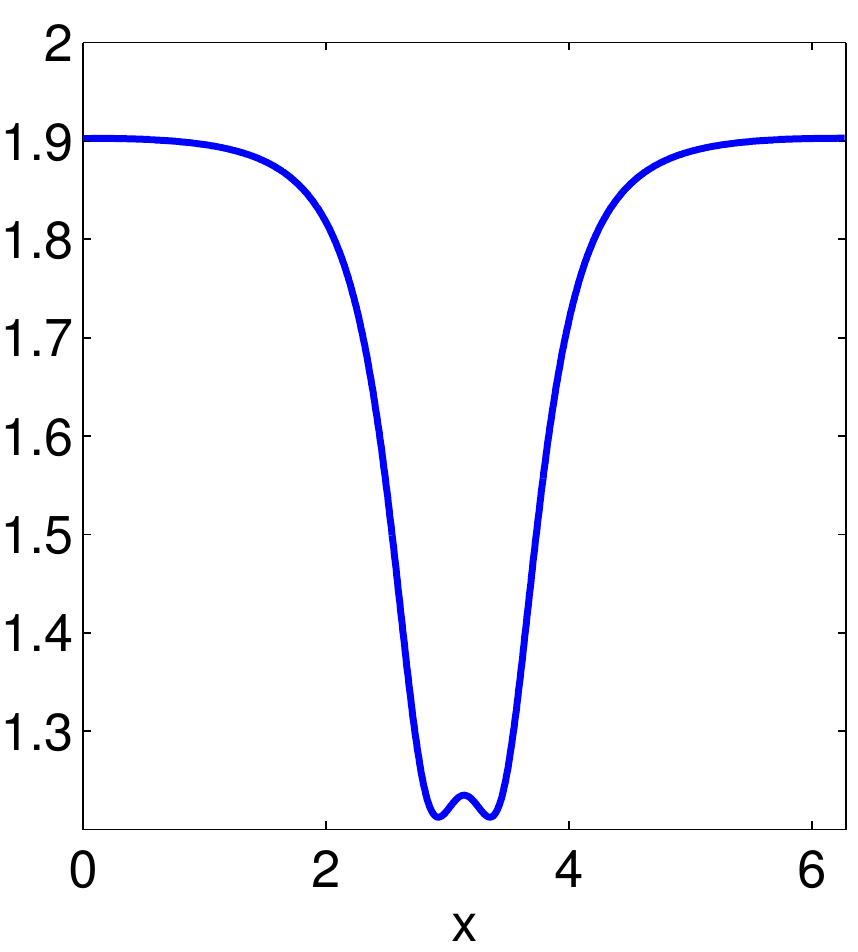}
\caption{Label 7: dark 1-soliton at $\zeta=3.30685$}
\end{subfigure}
\begin{subfigure}{0.48\textwidth}
\includegraphics[width=0.508\textwidth]{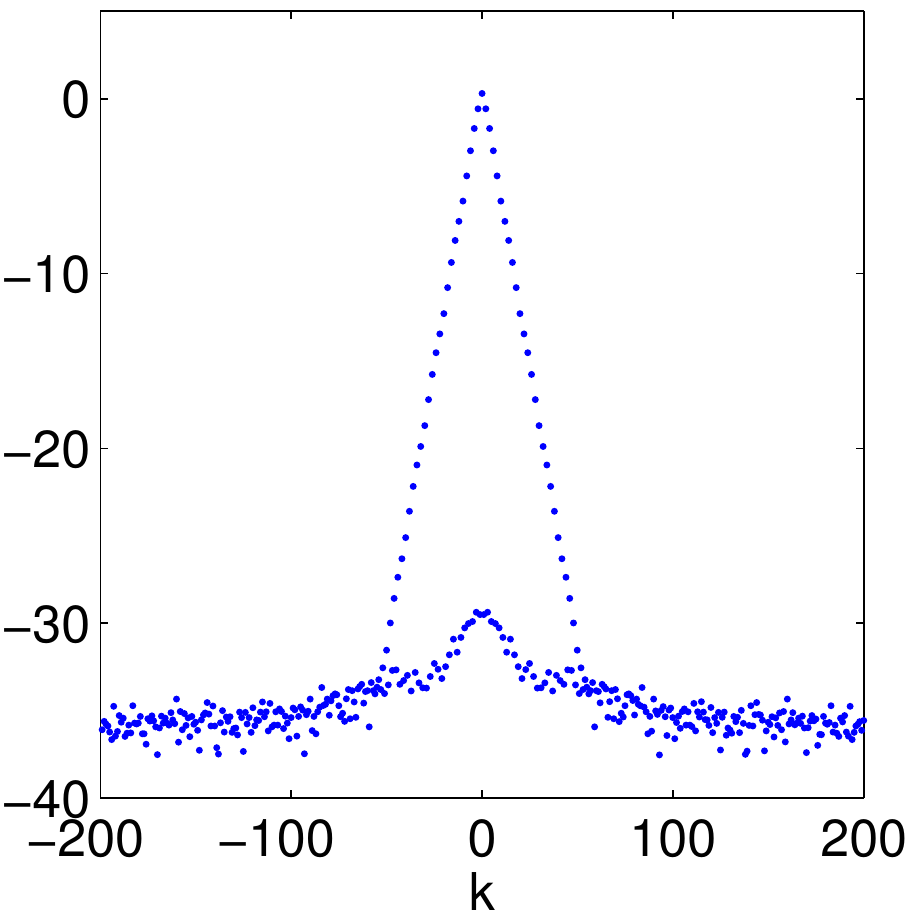}
\includegraphics[width=0.472\textwidth]{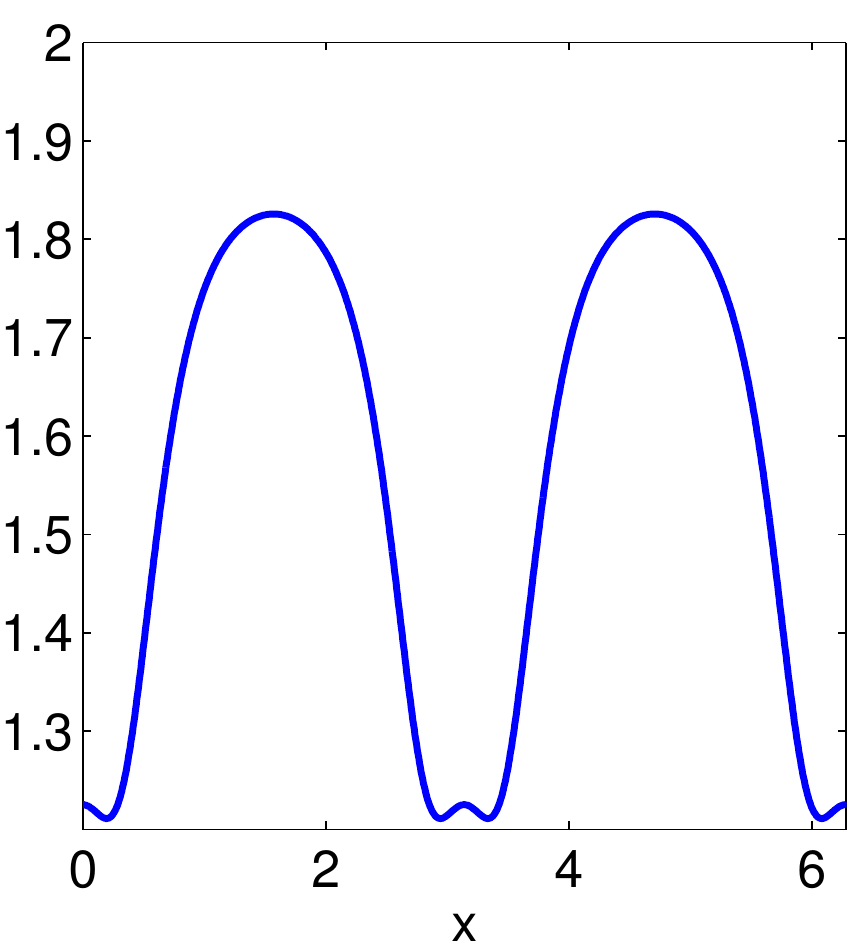}
\caption{Label 8: dark 2-soliton at $\zeta=3.25783$}
\end{subfigure}
\caption{Selected solutions. Left: $\log(|\hat a(k)|)$; right: $|a(x)|$.}
\label{solitons_zeta}
\label{solutions_branches_4}
\end{figure}

\subsection{Time-dependent detuning and bifurcation diagrams} \label{dynamic}


A typical problem in applications is the following: how can one drive the coupled laser/ring resonator into the 1-soliton state? 
A commonly used and quite practical idea is the use of time-dependent detuning
\cite{Karpovetal_Universal_dynamics}. Here the detuning $\zeta= \zeta(t)$ varies in time until it reaches its
final value at which it stays, cf. Figure~\ref{dynamic_detuning_soliton_pic1}(a).

\medskip

The mathematical model is given by the time-dependent Lugiato-Lefever equation
\begin{equation}
     \ri\partial_t a(x,t) = (- \ri+\zeta(t))a(x,t)-d \partial_x^2 a(x,t) -|a(x,t)|^2 a(x,t)+ \ri f,
    \; x\in(0,\pi), \; t \in \R
\label{t_ll}
\end{equation}
with homogenoeus Neumann boundary conditions at $x=0$ and $x=\pi$. In our example we considered the low-power
scenario of Section~\ref{f=1.6,d=0.1} with $f=1.6$, $d=0.1$ where 1-, 2- and 3-solitons exist for
$\zeta=2.67$, cf. Figure~\ref{bif_diagram_f=1.6}. A convenient choice for $\zeta$ is then given by the piecewise linear function 
$$ 
\zeta(t) = \left\{
\begin{array}{ll}
-5 & 0 \leq t \leq T/30, \vspace{\jot}\\
\frac{2.67+5}{T/3-T/30}(t-T/30) -5 & T/30 \leq t \leq T/3, \vspace{\jot}\\
2.67 & T/3\leq t \leq T
\end{array}
\right.
$$
with $T=1000$. With these choices we numerically integrated equation \eqref{t_ll} starting from initial data
given by the spatially constant steady-state solution at $\zeta=-5$ perturbed by a random 
function of size $10^{-14}$. The numerical scheme is a Strang-splitting\footnote{ In the Strang-splitting the
linear inhomogeneous part (including the space derivatives and the forcing/damping terms) is propagated by an
exponential integrator whereas the nonlinear part is solved exactly as for the standard NLS. It is proved in
\cite{jami:14} that under certain regularity assumptions on the initial data (see Section~\ref{further}) and
the forcing/detuning the numerical method converges to the true solution of \eqref{t_ll} on bounded
time-intervals with order $2$ in $L^2(0,\pi)$ and with order $1$ in $H^1(0,\pi)$.} in time as suggested in
\cite{jami:14}, and a pseudo-spectral method in space.

\smallskip

Depending on the (random) initial data we observe different scenarios for the evolution of the $L^2$-norm. We
show three of these scenarios in the following figures. The underlying picture is the bifurcation diagram
(with respect to $\zeta$) of the stationary equation from Figure~\ref{bif_diagram_f=1.6}. On top of this bifucation diagram we plot in grey the time evolution of the $L^2$-norm of the solution of \eqref{t_ll}.

\begin{figure}[H] 
\begin{subfigure}{0.47\textwidth}
\includegraphics[width=\textwidth]{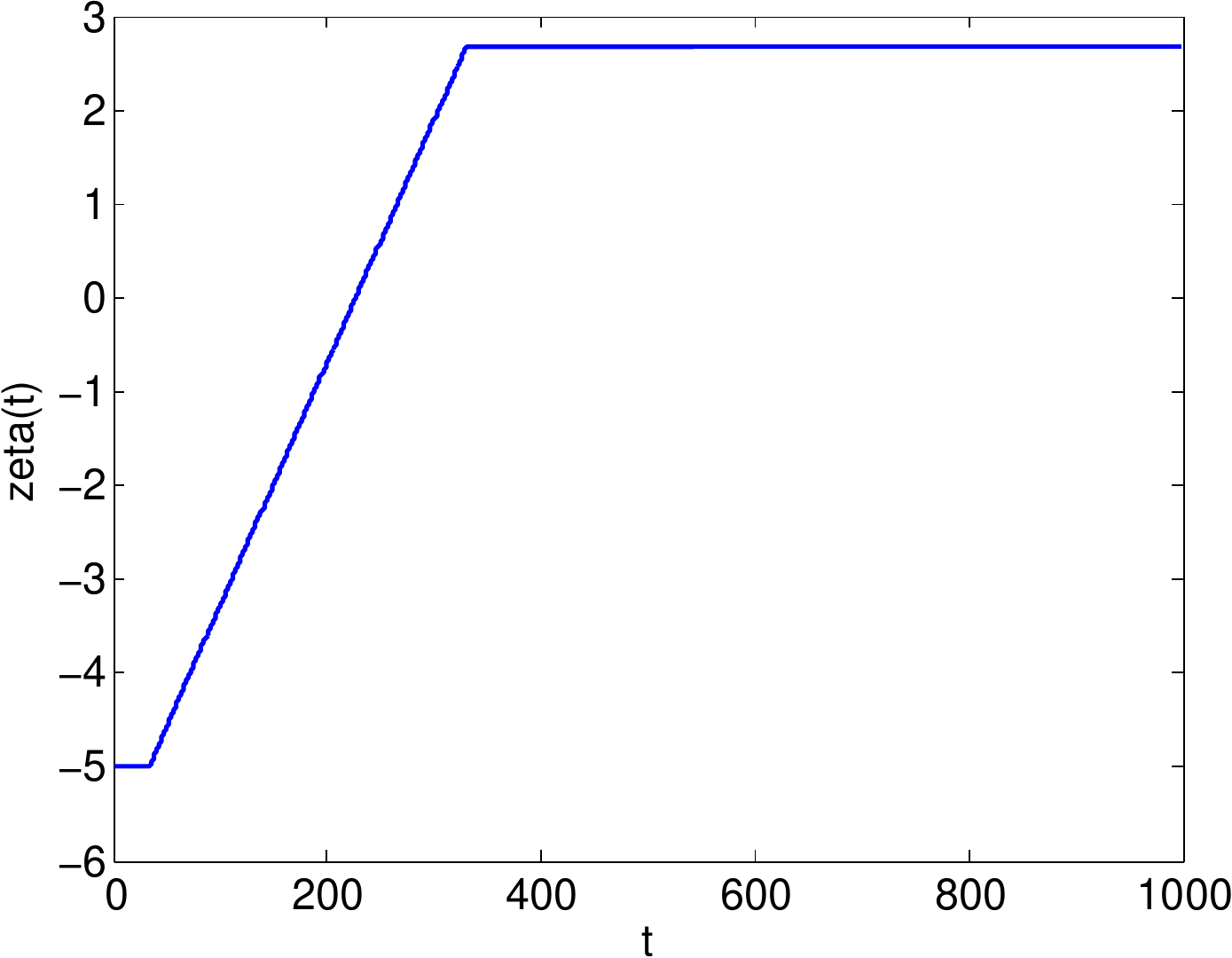}
\caption{(a) time dependent detuning}
\end{subfigure} \quad 
\begin{subfigure}{0.47\textwidth}
\includegraphics[width=\textwidth]{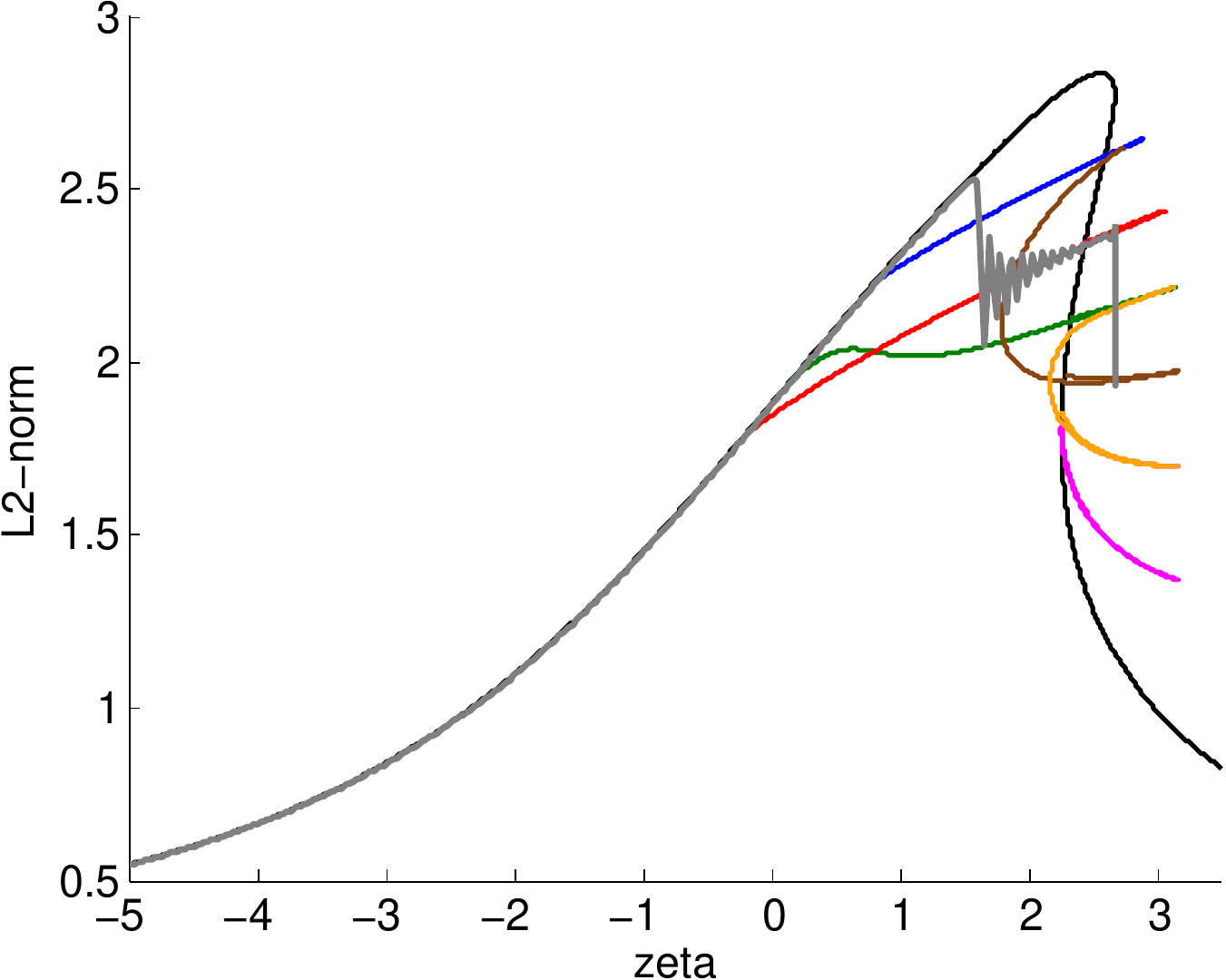}  
\caption{(b) final state: 3 soliton}
\end{subfigure}
\caption{Time evolution of $L^2$-norm in dynamic detuning}
\label{dynamic_detuning_soliton_pic1}
\end{figure}

\begin{figure}[H]
\begin{subfigure}{0.47\textwidth}
\includegraphics[width=\textwidth]{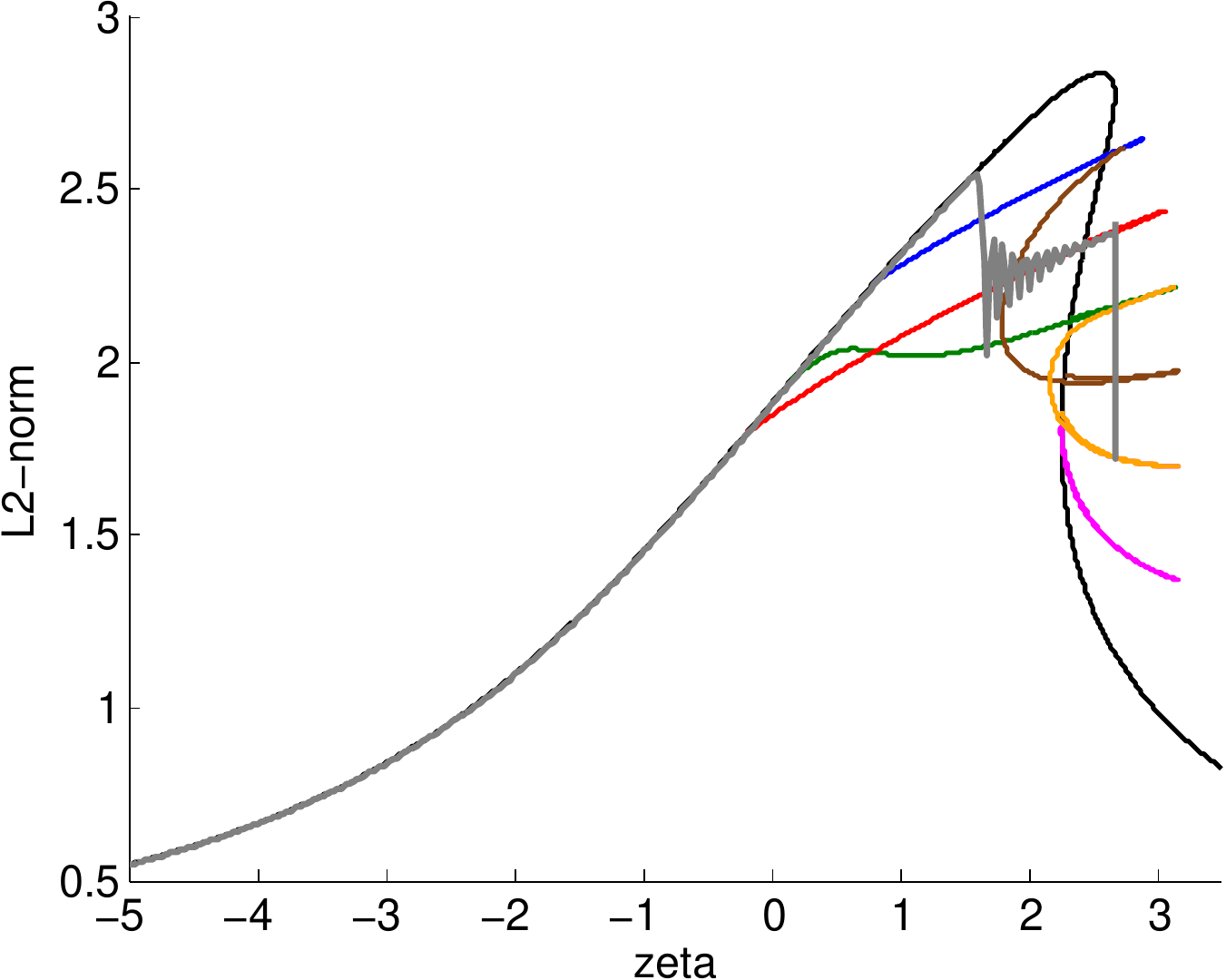} 
\caption{(c) final state: 2 soliton}
\end{subfigure}
\begin{subfigure}{0.47\textwidth}
\includegraphics[width=\textwidth]{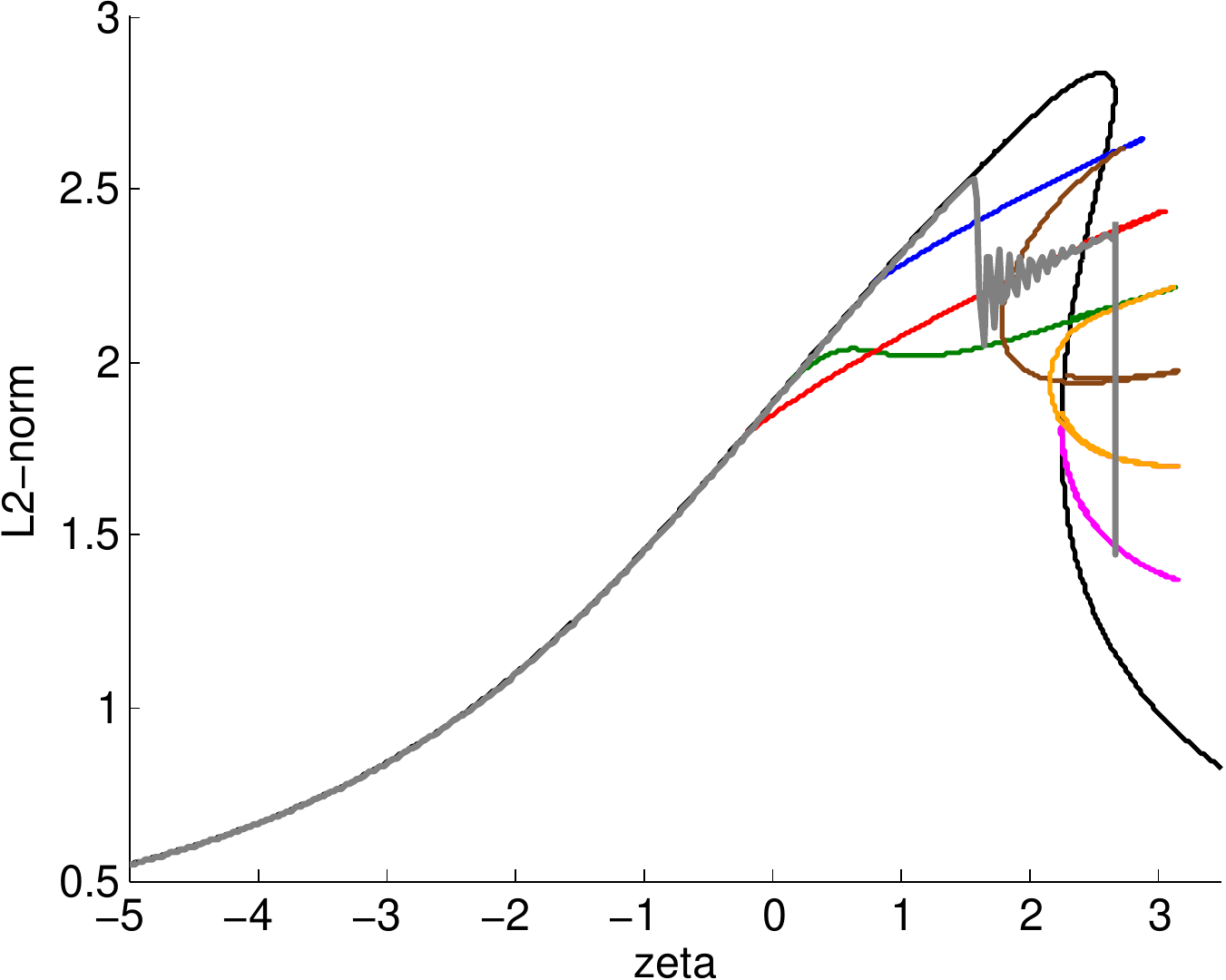} 
\caption{(d) final state: 1 soliton}
\end{subfigure}
\caption{Time evolution of $L^2$-norm in dynamic detuning}
\label{dynamic_detuning_soliton_pic2}
\end{figure} 

At the beginning of the time evolution the solution remains close to the constant one; the grey curve and the black curve practically overlap. When $\zeta(t)$ reaches a certain value between $1$ and $2$ the solution develops a spatially
$\frac{2\pi}{5}$-periodic pattern close to the red curve before passing to a $k$-soliton ($k\in\{1,2,3\}$) where the time evolution becomes numerically stationary. 
Qualitatively very similar time evolutions were observed by Herr et al. \cite{Herr2013},
p.148 for a different set of parameter values.\footnote{As explained in the supplementary material to
\cite{Herr2013} the numerical integration method differs from ours since it is based on a time-dependent
version of the coupled mode equations \eqref{basic.F} instead of the Lugiato-Lefever equation \eqref{t_ll}.}
The strong similarity of time-dependent detuning simulations with our bifurcation diagrams suggests that the
bifurcation diagrams may give important clues on the question how to drive the laser/ring resonator system
into a soliton state. In particular, we conclude that the final soliton states of a time-dependent detuning
approach typically lie on bifurcation curves that we have analytically characterized in Theorem~\ref{Thm 3
bifurcation zeta} and Theorem~\ref{Thm 4 bifurcation f}.


\section{Conclusions} \label{conclusions}
  
  Let us finally summarize our results from the point of view of their applicability. A first outcome of our
  analysis is that the search for frequency combs can from now on be reduced to specific parameter regimes.
  Theorem~\ref{Thm 1 A priori estimates} shows for large $|f|$ and small $|d|$ that, roughly speaking,
  frequency combs satisfy the estimate $\|a\|_\infty = O(|f|^3 |d|^{-1})$ uniformly with respect to $\zeta$.
  In particular this result gives an upper bound for the maximal amplitude that one can expect for a highly
  localized soliton. In the case of normal dispersion $d<0$ Theorem~\ref{Thm 2 nonexistence} shows that only
  trivial combs exists outside the explicit interval $[\zeta_*, \zeta^\ast]$ of detuning parameters. Here
  $\zeta_*<0<\zeta^*$, and $\zeta^*, -\zeta_*= O( f^6 |d|^{-2})$ for large $f$ and small $|d|$. A similar
  result holds for abnomalous dispersion $d>0$.

  \medskip
  
  Theorem~\ref{Thm 3 bifurcation zeta} and~\ref{Thm 4 bifurcation f} prove the existence of bifurcating
  branches of frequency combs and provide explicit formulae for the parameter values at which these branches
  emanate from the trivial ones. This is the analytical justification for what AUTO is successfully
  doing, i.e., finding bifurcation points and following bifurcation branches. The resulting bifurcation
  diagrams are quite complicated (cf. Section~\ref{illustrations}) and so far only few theoretical information
  about their structures has become apparent, see the remarks following Theorem~\ref{Thm 3 bifurcation zeta}.
  It remains open to investigate both analytically and numerically points of secondary bifurcation. The strong
  similarity of time-dependent detuning simulations with our bifurcation diagrams shows that the final
  soliton states of a laser/ring resonator system cannot be understood without the bifurcation phenomena that
  are the core of this paper.
  
  \medskip
  
  The use of AUTO opens new possibilities to numerically compute the shape of specific frequency
  combs, and it allows to make observations and conjectures. Let us list two of them.
  
  \begin{itemize}
  \item We observed that in the bifurcation diagrams with respect to $f$ the two solitons of
  Figure~\ref{solitons_f} appeared at turning points of the bifurcation curves, cf.
  Figure \ref{bif_diagram2_zeta=10}. The same happens in bifurcation diagrams with respect
  of $\zeta$ from Figure~\ref{bif_diagram_f=1.6} and Figure~\ref{bif_diagram_f=2}. Also here the solutions
  with most pronounced soliton character, cf.  Figures~\ref{many_solitons}, \ref{many_solitons2},
  \ref{solitons_zeta}, lie on turning points of the branches. It will be worthwhile to investigate further
  if this observation is true in more general cases.
  \item Soliton combs are particularly important in applications and one would like to drive the pump-resonator
  system into such a state. Since all solitons were
  found on bifurcation curves connected to the trivial states time-dependent detuning
   seems to be a feasible way to eventually reach a soliton as outlined in Section~\ref{dynamic}. 
  \end{itemize}
  
  We are aware that the last remark immediately leads to the important question about stability of solutions
  along branches. To the best of our knowledge AUTO does not offer a straightforward option that provides
  stability information for non-parabolic equations like \eqref{basic}. Locally near the bifurcation points
  the principle of exchange of stability (cf. \cite{Kielh_bifurcation_theory,CrRab_exchangeofstab}) allows to analytically
  predict the stability or instability of solutions. Although this is an interesting piece of information, we
  refrained from elaborating it because of two reasons: it would have substantially enlarged the exposition
  and its validity is restricted only to small neighborhoods of the bifurcation points.  
  Instead, it would be much more interesting to characterize the stability or instabilty globally along
  the branches. Analytically, this is very challenging and currently out of reach. It will be one of our
  future goals to attack this question numerically.
  
\section*{Acknowledgements}  
 
 Both authors thank J.~G\"artner, T.~Jahnke, Ch.~Koos, P.~Palomo-Marin, J.~Pfeifle, and Ph.~Trocha (all from
 KIT) for fruitful discussions. We are grateful to T.~Jahnke for letting us use two of his MATLAB codes: one that
 postprocesses AUTO data and generates solution plots and one that performs the time integration of \eqref{t_ll} via Strang-splitting. Additionally, both authors gratefully
 acknowledge financial support by the Deutsche Forschungs\-gemeinschaft (DFG) through 
 the research grant MA 6290/2-1 (first author) and CRC 1173 (both authors).
 

\bibliographystyle{plain}
\bibliography{bibliography}

\begin{thebibliography}{10}

\bibitem{bishop:90}
A.R. Bishop, M.G. Froest, D.W. McLaughlin, and E.A. Overman.
\newblock A modal representation of chaotic attractors for the driven, damped
  pendulum chain.
\newblock {\em Physical Letters A}, 144:17--25, 1990.

\bibitem{chembo_2010}
Y.~K. Chembo and Nan Yu.
\newblock Modal expansion approach to optical-frequency-comb generation with
  monolithic whispering-gallery-mode resonators.
\newblock {\em Physical Review A}, 82:033801, 2010.

\bibitem{coen2013a}
St\'{e}phane Coen and Miro Erkintalo.
\newblock Universal scaling laws of {K}err frequency combs.
\newblock {\em Opt. Lett.}, 38(11):1790--1792, 2013.

\bibitem{CrRab_bifurcation}
Michael~G. Crandall and Paul~H. Rabinowitz.
\newblock Bifurcation from simple eigenvalues.
\newblock {\em J. Functional Analysis}, 8:321--340, 1971.

\bibitem{CrRab_exchangeofstab}
Michael~G. Crandall and Paul~H. Rabinowitz.
\newblock Bifurcation, perturbation of simple eigenvalues and linearized
  stability.
\newblock {\em Arch. Rational Mech. Anal.}, 52:161--180, 1973.

\bibitem{haye_2007}
P.~Del'Haye, A.~Schliesser, O.~Arcizet, T.~Wilken, R.~Holzwarth, and T.J.
  Kippenberg.
\newblock Optical frequency comb generation from a monolithic microresonator.
\newblock {\em Nature}, 450:1214--1217, 2007.

\bibitem{Erkintalo2014}
Miro Erkintalo and St\'{e}phane Coen.
\newblock Coherence properties of {K}err frequency combs.
\newblock {\em Opt. Lett.}, 39(2):283--286, 2014.

\bibitem{Godey_2016}
C.~{Godey}.
\newblock {A bifurcation analysis for the Lugiato-Lefever equation}.
\newblock {\em ArXiv e-prints}, July 2016.

\bibitem{Godey_et_al2014}
Cyril Godey, Irina~V. Balakireva, Aur\'elien Coillet, and Yanne~K. Chembo.
\newblock Stability analysis of the spatiotemporal {L}ugiato-{L}efever model
  for {K}err optical frequency combs in the anomalous and normal dispersion
  regimes.
\newblock {\em Phys. Rev. A}, 89:063814, 2014.

\bibitem{haller:99}
G.~Haller.
\newblock Homoclinic jumping in the perturbed nonlinear {S}chr\"odinger
  equation.
\newblock {\em Comm. Pure Appl. Math.}, 52(1):1--47, 1999.

\bibitem{Herr2013}
T.~Herr, V.~Brasch, J.~Jost, C.Y. Wang, N.M. Kondratiev, M.L. Gorodetsky, and
  T.J. Kippenberg.
\newblock Temporal solitons in optical microresonators.
\newblock {\em Nature Photonics}, 8:145--152, 2014.

\bibitem{herr_2012}
T.~Herr, K.~Hartinger, J.~Riemensberger, C.Y. Wang, E.~Gavartin, R.~Holzwarth,
  M.L. Gorodetsky, and T.J. Kippenberg.
\newblock Universal formation dynamics and noise of {K}err-frequency combs in
  microresonators.
\newblock {\em Nature Photonics}, 6:480--487, 2012.

\bibitem{jami:14}
T.~Jahnke, M.~Mikl, and R.~Schnaubelt.
\newblock Strang splitting for a semilinear {S}chr\"{o}dinger equation with
  damping and forcing.
\newblock CRC 1173-Preprint 2016/4, Karlsruhe Institute of Technology, 2016.

\bibitem{Karpovetal_Universal_dynamics}
Maxim Karpov, Hairun Guo, Erwan Lucas, Arne Kordts, Martin Pfeiffer, Victor
  Brasch, Grigory Lihachev, Valery Lobanov, Michael Gorodetsky, and Tobias
  Kippenberg.
\newblock Universal dynamics and controlled switching of dissipative {K}err
  solitons in optical microresonators.
\newblock In {\em Conference on Lasers and Electro-Optics}, page FM2A.2.
  Optical Society of America, 2016.

\bibitem{Kielh_bifurcation_theory}
Hansj{\"o}rg Kielh{\"o}fer.
\newblock {\em Bifurcation theory}, volume 156 of {\em Applied Mathematical
  Sciences}.
\newblock Springer, New York, second edition, 2012.
\newblock An introduction with applications to partial differential equations.

\bibitem{kippenberg_2011}
T.~J. Kippenberg, R.~Holzwarth, and S.~A. Diddams.
\newblock Microresonator-based optical frequency combs.
\newblock {\em Science}, 332:555--559, 2011.

\bibitem{Kroe_Hea_Kie_bifurcation}
Stefan Kr{\"o}mer, Timothy~J. Healey, and Hansj{\"o}rg Kielh{\"o}fer.
\newblock Bifurcation with a two-dimensional kernel.
\newblock {\em J. Differential Equations}, 220(1):234--258, 2006.

\bibitem{Liu_Shi_Wang_bifurcation}
Ping Liu, Junping Shi, and Yuwen Wang.
\newblock Bifurcation from a degenerate simple eigenvalue.
\newblock {\em J. Funct. Anal.}, 264(10):2269--2299, 2013.

\bibitem{Lugiato_Lefever1987}
L.~A. Lugiato and R.~Lefever.
\newblock Spatial dissipative structures in passive optical systems.
\newblock {\em Phys. Rev. Lett.}, 58:2209--2211, 1987.

\bibitem{Miyaji_Ohnishi_Tsutsumi2010}
T.~Miyaji, I.~Ohnishi, and Y.~Tsutsumi.
\newblock Bifurcation analysis to the {L}ugiato-{L}efever equation in one space
  dimension.
\newblock {\em Phys. D}, 239(23-24):2066--2083, 2010.

\bibitem{Parra-Rivas2014}
Pedro Parra-Rivas, Dami\`{a} Gomila, Fran\c{c}ois Leo, St\'{e}phane Coen, and
  Lendert Gelens.
\newblock Third-order chromatic dispersion stabilizes {K}err frequency combs.
\newblock {\em Opt. Lett.}, 39(10):2971--2974, 2014.

\bibitem{Rab_some_global}
Paul~H. Rabinowitz.
\newblock Some global results for nonlinear eigenvalue problems.
\newblock {\em J. Functional Analysis}, 7:487--513, 1971.

\bibitem{Wang2014}
Shaofei Wang, Hairun Guo, Xuekun Bai, and Xianglong Zeng.
\newblock Analysis of high-order dispersion on ultrabroadband
  microresonator-based frequency combs.
\newblock {\em arXiv}, 1403.0183, 2014.

\bibitem{We_bifurcation_at}
David Westreich.
\newblock Bifurcation at double characteristic values.
\newblock {\em J. London Math. Soc. (2)}, 15(2):345--350, 1977.

\end{thebibliography}

\end{document}